\def\NN{\mathbb{N}}
\def\RR{\mathbb{R}}
\def\CC{\mathbb{C}}
\def\cH{\mathcal{H}}
\def\cI{{\cal{I}}}
\def\cJ{{\cal{J}}}
\def\cU{{\cal{U}}}
\def\uu{\text{\bf u}}
\def\bu{\bar{u}}
\def\uu{\text{\bf u}}
\def\ww{\text{\bf w}}
\def\WW{\text{\bf W}}
\def\UU{\text{\bf U}}
\def\VV{\text{\bf V}}
\def\WW{\text{\bf W}}
\def\FF{\text{\bf F}}
\def\boxc{\square_c}
\def\boxca{\square_{c,\alpha}}
\def\supp{\text{\rm supp}}
\def\dist{\text{\rm dist}}
\def\red{\color{red}}
\theoremstyle{theorem}
\newtheorem{theorem}{Theorem}
\theoremstyle{corollary}
\newtheorem{corollary}{Corollary}
\theoremstyle{proposition}
\theoremstyle{lemma}
\newtheorem{lemma}{Lemma}
\theoremstyle{definition}
\newtheorem{definition}{Definition}
\theoremstyle{condition}
\theoremstyle{Assumption}
\theoremstyle{remark}
\newtheorem{remark}{Remark}
\theoremstyle{claim}
\theoremstyle{example}
\theoremstyle{obs}
\newenvironment{myproof}[1][\proofname]{\proof[#1]\mbox{}\\*}{\endproof}
\begin{document}


\begin{center}
\begin{Large}{\bf Inverse photoacoustic tomography problem in  media with fractional attenuation}\end{Large}\\
\vspace{1em}

{\bf Sebasti\'an Acosta\footnotemark[1] \& Benjam\'in Palacios\footnotemark[2]}
\end{center}
\vspace{-1em}

\footnotetext[1]{Department of Pediatrics, Baylor College of Medicine and Texas Children's Hospital, Houston, TX, USA. \\ sebastian.acosta@bcm.edu}
\footnotetext[2]{Department of Mathematics, Pontificia Universidad Cat\'olica de Chile, Santiago, Chile. \\ benjamin.palacios@uc.cl
}
\begin{abstract} 
\noindent{\sc Abstract.} 
We investigate the inverse problem of recovering an initial source for the wave equation with fractional attenuation, motivated by photoacoustic tomography (PAT). The attenuation is modeled by a Caputo fractional derivative of order $\alpha\in(0,1)$. We establish uniqueness under a geometric foliation condition via an adaptation of two types of Carleman estimates to the fractional setting, prove stability through continuity inequalities for fractional time-derivatives of wave solutions, and derive a reconstruction scheme based on a Neumann series. While our results apply directly to PAT, we expect that the analytic approach and tools employed might be of broader relevance to the analysis of PDEs with memory effects and to inverse problems for attenuated wave models.
\end{abstract}

\noindent{\it Key words:} inverse problems, wave equation, fractional derivative, uniqueness, stability, reconstruction.\\
\noindent{\it 2020 Mathematics Subject Classification}. Primary 35R30. Secondary 35L05, 35R11.

\tableofcontents


\section{Introduction}

For a given open and bounded domain $\Omega$ with a smooth boundary $\partial\Omega$ and a given relatively open subset $\Gamma\subset\partial\Omega$, this article focuses on the problem of determining an initial condition $u_0$ supported inside $\Omega$, from observations consisting of the boundary trace at $\Gamma$ of a wave field $u$ generated by $u_0$, and propagating across an acoustically attenuated media. More specifically, for smooth functions $c(x)\geq c_0>0$ and $a(x)\geq 0$, representing respectively the wave speed and internal damping, we analyze the invertibility ---in suitable function spaces--- of the observation map
\[
\Lambda_\alpha: u_0 \mapsto u|_{(0, T) \times \Gamma},
\]
with $u$ solution to the initial value problem (IVP)
\begin{equation} \label{frac_PAT}
\left\{
\begin{array}{ll}
\partial_t^2 u - c^2(x)\Delta u + a(x)\partial_t^\alpha u = 0,&(t,x)\in(0,\infty)\times\RR^n \\[.5em]
(u, \partial_t u)|_{t=0} = \left(u_0, -\textstyle\frac{a(x)}{\Gamma(2 - \alpha)}u_0\right),&x\in\RR^n.
\end{array}
\right.
\end{equation}
In the previous system, $\Gamma(\alpha)$ stands for the Gamma function, while $\partial^\alpha_t$ represents the Caputo fractional derivative of order $\alpha\in(0,1)$ defined as
\[
\partial_t^\alpha f(t) = \frac{1}{\Gamma(1 - \alpha)} \int_0^t \frac{f'(s)}{(t - s)^\alpha} \, ds
\]
for an absolutely continuous function $f$.

Our analysis aims to answer the three main questions related to the inverse problem: uniqueness, stability, and reconstruction; and for this purpose, we adapt an approach previously used by the authors to address the case of a wave equation with memory-type attenuation but involving a regular kernel. The main technical contributions stemming from this work include the refinement of a local unique continuation argument based on a foliation condition and two kinds of Carleman estimates; and specific Sobolev regularity inequalities concerning the Caputo fractional derivative of solutions to the unattenuated wave equation, which are obtained employing a novel formulation of the Caputo fractional derivative in the sense of distributions, the oscillatory integral representation of solutions to the wave equation, and the theory of Fourier Integral Operators (FIO's).


The motivation behind this work arises from the medical imaging modality of photoacoustic tomography (PAT). The standard mathematical formulation in PAT involves recovering a spatially varying initial source term \( u_0(x) \), which represents the ultrasound pressure distribution at time \( t = 0 \), from measurements of ultrasonic waves collected at the boundary of the region of interest. This initial pressure distribution is directly linked to the optical energy absorbed by internal tissues following illumination by a laser pulse. It serves as the input for the second stage of PAT, which seeks to infer optical properties of the medium. The recovery of \( u_0 \) constitutes what is commonly referred to as the \emph{inverse photoacoustic tomography problem}.

The generation of ultrasound waves in PAT is typically modeled by a singular source term of the form \( \delta'(t)u_0(x) \), where \( \delta' \) denotes the time derivative of the Dirac delta distribution. This representation is inspired by the assumption of instantaneous (or rapid) thermal deposition. It is also physically natural to assume that no acoustic signal exists prior to this, implying that the acoustic field \( u(x,t) \) vanishes for \( t < 0 \). Therefore, for a wave operator of the form
\[
P[u] = \partial_t^2 u - c^2(x)\Delta u + \mathcal{A}[u],
\]
where \( c(x) \) is the sound speed and \( \mathcal{A} \) is a linear attenuation operator, the governing model is the inhomogeneous equation \( P[u] = \delta'(t)u_0(x) \), together with the causality condition \( u(x,t) = 0 \) for \( t < 0 \).

It is well known that this system is equivalent to an initial value problem (IVP)
\[
\left\{
\begin{array}{l}
\partial_t^2 u - c^2(x)\Delta u + \mathcal{A}[u] = 0, \\
(u, \partial_t u)|_{t=0} = (u_0, u_1),
\end{array}
\right.
\]
where the initial velocity \( u_1 \) is determined by \( u_0 \) and the form of \( \mathcal{A} \). For example, in the unattenuated case (\( \mathcal{A} = 0 \)), one has \( u_1 = 0 \), while in the damped case, where \( \mathcal{A}[u] = a(x)\partial_t u \) with \( a(x) \geq 0 \), the initial velocity becomes \( u_1 = -a(x)u_0 \) (see, for instance, \cite{Homan}).

In this paper, we study the inverse PAT problem for an attenuation operator defined by a lower order Caputo (fractional) time derivative $\mathcal{A}[u] = a(x)\partial^\alpha_t u$, with $\alpha\in(0,1)$. 
The limiting cases, \( \alpha = 0 \) and \( \alpha = 1 \), corresponding to the unattenuated and (standard) damped wave equations, respectively, have been previously studied in the literature (see, e.g., \cite{StUh2009,StU2011,Homan,Palacios2016,HaN2019,Palacios2022}).

We show below that for the fractional attenuation model $\mathcal{A}[u] = a(x)\partial^\alpha_t u$, the appropriate initial velocity is given by
\[
u_1 = -\frac{a(x)}{\Gamma(2 - \alpha)} u_0(x),
\]
which provides a smooth interpolation between the damped and undamped cases.

For uniqueness, we treat the partial data problem under the assumption that the domain admits an \emph{admissible foliation} by strictly convex hypersurfaces. These sorts of conditions are crucial when one aims to obtain global results from local arguments. In our case, the adaptation of a local Carleman estimate methodology forces us to request rather technical conditions over the foliation (see Definition~\ref{def:adm_foliation}). The main technical result of this section is Theorem \ref{thm:uniqueness_att}, which states the unique continuation principle from where the uniqueness of the inverse problem follows, corresponding to Corollary \ref{cor:injectivity}.

The stability of the inverse problem is stated in Theorem \ref{thm:stability}. The strategy adopted in its proof demands us to analyze the continuous dependence of fractional time-derivatives of the solution to the wave equation with respect to the initial data ---this is in fact an interesting result by itself. See Lemma \ref{lemma:comp_FIO_Caputo}. The desired stability inequality then follows by a perturbation argument concerning the analogous estimate for the PAT problem in the absence of attenuation, and previously obtained in the literature.

Finally, for the reconstruction result, we derive a Neumann series expansion under the assumption of full boundary observations (i.e., data available on the entire boundary) and smallness of the damping coefficient. See Theorem \ref{thm:reconstruction}

\subsection{Previous work}

The inverse problem in photoacoustic tomography has been extensively studied over the past two decades, both from applied and theoretical perspectives. On the theoretical side, a systematic use of microlocal analysis techniques can be traced back to the influential works of Stefanov and Uhlmann \cite{StUh2009,StU2011}. In these papers, the authors provide a comprehensive treatment of the mathematical problem, addressing issues of uniqueness, stability, and reconstruction in both full and partial data settings for the conservative (i.e., unattenuated) wave equation, considering smooth and piecewise smooth wave speeds, respectively. An incomplete list of papers analyzing this inverse problem via microlocal tools is \cite{StUh2009,StU2011,Homan,StYa2015,CO2016,Palacios2016,StYa2017a,StYa2017b,AP2018,EHK2020,EK2021,Palacios2022}. 

Many foundational elements of this approach, however, originate in earlier works, including results on unique continuation and Carleman estimates, as well as insights on the propagation of singularities and time reversal methods. See, for instance, \cite{Finch-Patch-Rakesh-2004,HKN2008}. 


In the context of attenuated media, the adaptation of such techniques to the damped wave equation (i.e., the $\alpha=1$ case) was partially achieved in \cite{Homan}, and later extended by the second author in \cite{Palacios2016,Palacios2022}. The former of the last two articles focuses primarily on reconstruction, a topic that is also central to \cite{HaN2019}, where multiple reconstruction methods are explored; the second article, on the other hand, incorporates boundary conditions and focuses on partial observations. A different model but somehow related to the damped wave equation is introduced in \cite{AM2016}, where, in a collaborative project, the first author of this present article employs control-theoretic ideas to study the mathematical inverse photoacoustic tomography problem in the context of thermodynamic attenuation.

Currently, multiple alternatives have been reported in the literature to mathematically represent attenuation effects in biological tissue. This challenge of accurately modeling attenuation in soft tissues is highlighted in \cite{KS2011} where, in addition, an integral transform relating measurements from the attenuated and unattenuated problems is proposed in the case of homogeneous media (i.e., with constant coefficients). This framework allows for the analysis of various attenuation models, many involving fractional derivatives in time ---and in some cases, in space as well (see also \cite{Ammari2011,KaS2013,HaKN2017,SShi2017}). More recently, an alternative approach has been proposed in \cite{KaR2021} to obtain uniqueness in the inverse problem for homogeneous attenuated media involving time-fractional derivatives. 

For heterogeneous media with fractional attenuation, \cite{Ya2022} investigates the inverse problems of determining either a source term or an initial condition for the same equation considered in this work ---namely, the wave equation with Caputo fractional attenuation of order $\alpha \in (0,1)$. Stability and uniqueness results are established via global Carleman estimates in the case of constant sound speed. It is now well understood, however, that this approach can be extended to variable sound speeds under specific geometric conditions ---specifically, when the domain admits a foliation by concentric, strictly convex Euclidean spheres with respect to the sound speed metric. This can be regarded as a particular instance of our foliation condition (see Definition \ref{def:adm_foliation}). We also point out that system \eqref{frac_PAT} does not fit into the framework of \cite{Ya2022}, where it is assumed that one of the initial conditions vanishes with the purpose of allowing a symmetrization to negative times. We also point out that the analysis carried out in \cite{Ya2022} allows for the fractional exponent $\alpha$ to depend on the space variable. This assumption is not considered in this article (we assume a constant $\alpha$), and we leave the question of whether our techniques can be adapted to that setting for a future work.

This paper aims to extend some of the results obtained in \cite{Ya2022} and provide a more complete treatment of the inverse PAT problem in fractionally attenuated media. Our approach is an adaptation of the methodology introduced by the authors in \cite{AP2018}, which addresses the inverse photoacoustic tomography problem with attenuation modeled by an integro-differential operator involving a regular kernel. In particular, we introduce a more general (albeit more technical) foliation condition that ensures uniqueness for the inverse problem as a consequence of a local Carleman estimate approach. This is a more restrictive class of foliations than the one introduced in \cite{StUh2013} for the inverse problem of determining a sound speed. The reason for this lies in the fact that attenuation terms break the symmetry of the unattenuated wave equation; thus, standard unique continuation results do not hold, and have to be adapted. Additionally, by analyzing continuity inequalities of solutions to the wave equation ---particularly the dependence of time-fractional derivatives with respect to initial data--- we derive a stability estimate under sharper assumptions and rather general observation regions.

\subsection{Organization of the paper}

We begin with a section containing preliminary results that will be used throughout the paper. Specifically, we define a generalization of the Caputo derivative, as introduced in \cite{LiLiu}, and reformulate it as an oscillatory integral using the Fourier and inverse Fourier transforms. We also clarify the motivation behind the particular initial conditions adopted in this work, which arise from formulating the photoacoustic tomography problem as an inhomogeneous wave equation with an instantaneous source at time zero. This preliminary section concludes with results related to the forward problem, notably the energy-dissipative nature of system \eqref{frac_PAT}, which is established using the positivity of a singular kernel associated with the Caputo derivative. We then demonstrate the well-posedness of the forward problem in fractional Sobolev spaces.

The main part of the paper is divided into three sections, each addressing one of the core aspects of the inverse problem: uniqueness, stability, and reconstruction. Section 3, on uniqueness, presents this property as a corollary of a unique continuation theorem. The main ingredients behind its proof are stated in three separate lemmas, two of them involving different types of local Carleman estimates. Section 4 is dedicated to stability and provides a corresponding theorem derived from a continuity inequality involving the Caputo fractional derivative and the solution to the unattenuated wave equation. This result is of independent interest and uses a microlocal representation (parametrix) of solutions to the wave equation. Finally, Section 5 focuses on reconstruction, providing an explicit formula in the form of a Neumann series, which builds on the main results established in the previous sections.

For completeness, we include several appendices that provide key analytical tools used throughout the paper. These cover interpolation theory and fractional Sobolev spaces, Bochner integration, convexity of Riemannian hypersurfaces, and symbol classes and Fourier integral operators.

\section{Preliminaries}

\subsection{Notation}

\begin{itemize}
    \item We denote by $H(t)$ the Heaviside step function.
    \item Gradients and partial derivatives will be denoted by $\nabla f = (f_t,f_x)$, $f_t =f'= \partial_tf$ and $f_x=\partial_xf$; also, $D=\frac{1}{i}\partial_x$ and $D_t=\frac{1}{i}\partial_t$.
    \item We denote by $\langle\cdot,\cdot\rangle$ the duality pairing between distributions and test functions.
    \item For a given smooth sound speed $c(x)>c_0>0$, smooth damping coefficient $a(x)\geq 0$, and $\alpha\in(0,1]$, we denote the wave operator and the (fractionally) attenuated wave operator as
    \[
\square_c = \partial^2_t - c^2\Delta;\quad \square_{c,\alpha} = \square_c +a(x)\partial^\alpha_t.
\]
\item For a given Banach space $X$, we denote its norm by $\|\cdot\|_X$; occasionally, the $L^2$-norm will be denoted simply as $\|\cdot\|$; the norm $\|\cdot\|^2_{(m,\tau)}$ stands for the Sobolev norm with parameter $\tau>0$ given by 
\[
\|f\|^2_{(m,\tau)} := \sum_{|\alpha|=j\leq m}\tau^{2(m-|\alpha|-j)}\|D^\alpha D^j_tf\|_{L^2(\RR^{n+1})}
\]
\item The Fourier transform of a function $f(x)\in L^1(\RR^n)$ will be denoted either by $\mathfrak{F}[f]$ and $\hat{f}$, where
\[
\mathfrak{F}[f](\xi) = \frac{1}{(2\pi)^n}\int e^{-ix\cdot\xi}f(x)dx.
\]
\item $\mathcal{D}'(\RR)$ and $\mathcal{S}'(\RR)$ denote the space of distributions and the space of tempered distributions, respectively.
\item We will denote by $S^{j}(\RR\times\RR^n\times\RR^n)$ the H\"ormander symbol class of order $j$ and type $(1,0)$, this is, the space of all smooth functions in $C^\infty(\RR\times\RR^n\times\RR^n)$ such that for all compact $K\subset \RR\times\RR^n$ and all $\alpha\in\NN^{n+1}$ and $\beta\in \NN^n$, there is a constant $C_{K,\alpha,\beta}$ such that 
\[
|\partial^\alpha_{t,x}\partial^\beta_\xi a(t,x,\xi)|\leq C(1+|\xi|)^{j-|\beta|},\quad\forall (t,x,\xi)\in K\times\RR^n.
\]
It is a Fr\'echet vector space when equipped with the topology generated by the seminorms consisting of the best constants in the inequalities above.

Symbols $a(x,\xi)$ and the symbol class $S^j(\RR^n\times\RR^n)$ are defined analogously.

\item In many estimations we will write $C$ to denote a generic positive constant that might change from line to line. If not apparent on the context, the dependence of this constant on the several parameters of the problem will be pointed out.
\end{itemize}

\subsection{Caputo's fractional derivative for locally integrable functions}

The standard definition of the Caputo fractional derivative $\partial^\alpha_tu$ requires $u$ to have an integrable first derivative (see, for instance, \cite{Pod98}). Based on the fact that the operation of convolution can be applied to two distributions in $\RR$ with support just bounded from below (see \cite{Gelfand_book} and also \cite{LiLiu} for an equivalent definition employing partitions of unity), this fractional operator has been recently extended in \cite{LiLiu} to functions belonging to $L^1_{loc}[0,T)$ ---the set of all functions that are absolutely integrable on any compact set $K\subset[0,T)$. For the sake of completeness, we briefly describe the main definitions and properties of the generalized convolution and Caputo derivative, and we refer the reader to the aforementioned paper for more details.


\begin{definition}[Definition 2.1 in \cite{LiLiu}]\label{def:conv}
    Given $f,g\in\mathcal{D}'(\RR)$ such that there are constants $M_f,M_g\in\RR$ satisfying $\supp(f)\subset[-M_f,\infty)$ and $\supp(g)\subset[-M_g,\infty)$, we set
    \[
    \langle f*g,\varphi\rangle := \sum_{j}\langle f*(h_j g),\varphi\rangle,\quad\forall \varphi\in C_c^\infty(\RR),
    \]
    where $\{h_j\}_{j\geq 0}$ is any locally finite and compactly supported partition of unity in $\RR$, i.e., $h_j\in C_c^\infty(\RR)$;  $0\leq h_j\leq 1$; $\sum h_j(x)=1$ for all $x\in\RR$; on any compact set $K\subset\RR$ there are only finitely many $h_j$'s with support intersecting $K$.

\begin{remark}\label{rmk:def_conv}
    Notice that for a partition of unity as above and
    for any test function $\varphi\in C^\infty_c(\RR)$, the sum in the previous definition is always finite. In fact, the function $t\mapsto \langle (h_jg)(s),\varphi(t+s)\rangle$ has support contained in
    \[
    \text{supp}(\varphi)-(\text{supp}(h_j)\cap [-M_g,\infty)),
    \]
    and it is clear that, because of locally finiteness, for all $j$ large enough
    \[
    [-M_f,\infty)\cap\left(\text{supp}(\varphi)-(\text{supp}(h_j)\cap [-M_g,\infty))\right)=\emptyset.
    \]
    Moreover, for a fixed time $T>0$, if we restrict our test function to those with support in $[0,T]$ we can find an integer $N_T>0$ for which
    \[
    \langle f*g,\varphi\rangle := \sum_{j=0}^{N_T}\langle f*(h_j g),\varphi\rangle,\quad\forall \varphi\in C_c^\infty([0,T]).
    \]
\end{remark}    
\end{definition}

The extension of the convolution defined above satisfies the following properties. More details can be found in \cite{LiLiu}.
\begin{lemma}[Lemma 2.2 in \cite{LiLiu}]

\begin{enumerate}
\item[(i)] The definition is independent of $\{h_j\}$ and agrees with the usual definition of convolution between distributions whenever one of the two distributions is compactly supported.
\item[(ii)] For $f,g$ as in Definition \ref{def:conv}, $f*g\in\mathcal{D}'(\RR)$ and there is a constant $M\in \RR$ such that $\supp(f*g)\subset[-M,\infty)$.
\item[(iii)] $f*g=g*f$,\; $f*(g*h) = (f*g)*h$.
\item[(iv)] Denoting by $D_t$ the distributional derivative, then, for $f,g$ as in Definition \ref{def:conv} we have
\[
(D_tf)*g = D_t(f*g) = f*D_tg.
\]
\end{enumerate}
\end{lemma}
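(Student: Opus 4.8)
The plan is to reduce every assertion to the classical theory of convolution between an arbitrary distribution in $\mathcal{D}'(\RR)$ and a \emph{compactly supported} distribution, exploiting the observation recorded in Remark~\ref{rmk:def_conv} that for each fixed test function the defining series $\sum_j\langle f*(h_jg),\varphi\rangle$ is in fact a \emph{finite} sum of such classical convolutions. Throughout I would use the standard facts that for $u\in\mathcal{D}'(\RR)$ and $v\in\mathcal{E}'(\RR)$ the convolution $u*v$ is a well-defined element of $\mathcal{D}'(\RR)$ with $\supp(u*v)\subset\supp(u)+\supp(v)$, that $(u,v)\mapsto u*v$ is bilinear and sequentially continuous in each slot, that it is commutative and associative as soon as enough of the factors are compactly supported, and that $D_t(u*v)=(D_tu)*v=u*(D_tv)$. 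The whole point is that the generalized convolution is, against any single test function, nothing but a finite linear combination of classical convolutions, so that all the classical identities lift once the relevant decompositions are shown to be compatible.

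For (i) and (ii) I would argue as follows. \emph{Independence of the partition:} given a second partition $\{\tilde h_k\}$, insert $\sum_k\tilde h_k=1$ into $h_jg$, use bilinearity to write $f*(h_jg)=\sum_k f*(\tilde h_k h_j g)$ (each piece compactly supported), and interchange the resulting double sum, which is legitimate because for fixed $\varphi$ only finitely many pairs $(j,k)$ contribute by local finiteness; this produces $\sum_k f*(\tilde h_k g)$. \emph{Agreement with the usual convolution:} if $g\in\mathcal{E}'(\RR)$ then only finitely many $h_j$ meet $\supp g$ and $\sum_j h_jg=g$, so bilinearity gives the classical $f*g$; the case $f\in\mathcal{E}'(\RR)$ follows from the sequential continuity of $f*(\cdot)$ together with the fact that the partial sums of $\sum_j h_jg$ stabilize against any fixed $\varphi$ and hence converge to $g$ in $\mathcal{D}'(\RR)$. \emph{Distribution property:} on each space $\mathcal{D}_K$ of test functions supported in a compact $K$, the series equals the finite sum over the relevant indices $j\in J_K$ of the distributions $f*(h_jg)$, hence is continuous on $\mathcal{D}_K$; this is precisely the criterion for membership in $\mathcal{D}'(\RR)$. \emph{Support:} since $\supp\big(f*(h_jg)\big)\subset[-M_f,\infty)+[-M_g,\infty)=[-(M_f+M_g),\infty)$ for every $j$, the choice $M=M_f+M_g$ works.

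For (iii) both identities follow by partitioning \emph{every} factor into compactly supported pieces $f_i=h_if$, $g_j=h_jg$ (and $h_k=\phi_k h$ for the triple product), applying the classical commutativity/associativity to the pieces, and re-summing:
\[
f*g=\sum_{i,j}f_i*g_j=\sum_{i,j}g_j*f_i=g*f,\qquad (f*g)*h=\sum_{i,j,k}(f_i*g_j)*h_k=\sum_{i,j,k}f_i*(g_j*h_k)=f*(g*h).
\]
The identities $f*g=\sum_{i,j}f_i*g_j$, and the analogous re-expansions in the associativity chain, are justified by the sequential continuity of convolution by a compactly supported distribution combined with the stabilization of the partial sums against any fixed test function (so the relevant series converge in $\mathcal{D}'(\RR)$ and can be pulled in and out of the convolution).

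For (iv) I would start from $\langle D_t(f*g),\varphi\rangle=-\langle f*g,\varphi'\rangle=\sum_j\langle D_t\big(f*(h_jg)\big),\varphi\rangle$ and apply the classical Leibniz identity termwise, $D_t\big(f*(h_jg)\big)=(D_tf)*(h_jg)=f*\big(D_t(h_jg)\big)$. Summing the first representation gives $(D_tf)*g$ by definition, since $D_tf$ again has support bounded below. For the remaining equality I would establish a small \emph{decomposition lemma}: if $w=\sum_j w_j$ is a locally finite sum of compactly supported pieces and $\supp w$ is bounded below, then $f*w=\sum_j f*w_j$ (proved exactly as the partition-independence in (i), by inserting a partition of unity $\sum_k h_k=1$ and interchanging finite sums). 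Applying it to $w=D_tg=\sum_j D_t(h_jg)$ yields $\sum_j f*\big(D_t(h_jg)\big)=f*(D_tg)$, completing the chain.

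The main obstacle is concentrated in associativity and in the second identity of (iv), where one must convolve an object that is \emph{itself} defined only through a partition; the careful point is the re-partitioning/decomposition argument and the accompanying continuity statement. I would flag in particular that in (iv) the pieces $D_t(h_jg)$ do \emph{not} arise from a partition of unity (indeed $D_t(h_jg)=(D_th_j)g+h_jD_tg\neq h_jD_tg$), so it is the decomposition lemma, rather than the partition-of-unity structure of the definition, that makes the second equality go through.
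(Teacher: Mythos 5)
The paper offers no proof of this lemma: it is quoted verbatim from Li and Liu (Lemma 2.2 in \cite{LiLiu}) and the reader is simply referred to that paper, so there is no in-paper argument to compare yours against. Taken on its own terms, your strategy is the right one and is in the spirit of the cited source: against a fixed test function, every generalized convolution is a \emph{finite} sum of classical convolutions $\mathcal{D}'(\RR)*\mathcal{E}'(\RR)$, so bilinearity over finite sums, the support inclusion $\supp(u*v)\subset\supp(u)+\supp(v)$, separate sequential continuity of classical convolution, and the classical commutativity, associativity and Leibniz rules all lift to the generalized operation once one checks that only finitely many indices contribute. Your finiteness bookkeeping is correct: when every factor is supported in a half-line $[-M,\infty)$ and a sum of points from their supports must land in $\supp\varphi$, each point is trapped in a compact window, and local finiteness of the partitions does the rest. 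You also correctly isolate the real difficulty: in (iv) the pieces $D_t(h_jg)$ do not arise from a partition of unity applied to $D_tg$, and in associativity one must convolve $f$ with the decomposition $\sum_{j,k}g_j*h_k$ of $g*h$, which likewise is not of partition-of-unity form; a decomposition lemma is exactly the right tool. (A small stylistic point: the final regrouping $\sum_i f_i*(g*h)=f*(g*h)$ in the associativity chain is cleanest if you invoke the already-proved commutativity, so that the partition sits on the factor prescribed by the definition.)

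One hypothesis in your decomposition lemma must be corrected, because as stated it is false. Take $f=H$, $w_1=\delta_{-1}$, and $w_n=\delta_{-n}-\delta_{-(n-1)}$ for $n\geq 2$: the supports form a locally finite family, the partial sums equal $\delta_{-N}\to 0$ in $\mathcal{D}'(\RR)$, so $w=0$ and $\supp w$ is (vacuously) bounded below; yet $\sum_n f*w_n=\lim_N H(\cdot+N)=1\neq 0=f*w$. What your interchange-of-finite-sums proof actually uses --- and what must be added to the statement --- is that the supports of \emph{all} pieces $w_n$ lie in a common half-line $[-M,\infty)$; without this, infinitely many pairs (partition index, piece index) can contribute against a single test function and the interchange breaks down. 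Both of your applications do satisfy the stronger hypothesis, namely $\supp\bigl(D_t(h_jg)\bigr)\subset\supp(h_jg)\subset[-M_g,\infty)$ in (iv), and $\supp(g_j*h_k)\subset[-(M_g+M_h),\infty)$ in the associativity chain, so once the hypothesis is repaired your argument is complete and correct.
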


\begin{definition}[Definition 3.4 in \cite{LiLiu}]\label{def:Caputo}
    Let $0<\alpha<1$ and $f\in L^1_{loc}[0,\infty)$ such that $t=0$ is a Lebesgue point from the right, this is, there is $f_0\in\RR$ for which 
    \[
    \lim_{t\to 0^+}\frac{1}{t}\int^t_0|f-f_0|dt=0.
    \]
    The Caputo derivative of order $\alpha$ is defined as
    \[
    \partial^\alpha_t:f\mapsto \Phi_{-\alpha}*(H(t)(f-f_0)),
    \]
    where the kernel $\Phi_{-\alpha}$ corresponds to the following tempered distribution
    \[
    \Phi_{-\alpha}(t) = \frac{-1}{\Gamma(1-\alpha)}\frac{d}{dt}\left(H(t)t^{-\alpha}\right),\quad\text{or equivalently}\quad \mathfrak{F}[\Phi_{-\alpha}](\omega) = (i\omega)^\alpha. 
    \]
   Here, $\mathfrak{F}$ stands for the Fourier transform in the time variable. 
   The convolution above makes sense in light of Definition \ref{def:conv} since both are distributions supported in $[0,\infty)$. 

    Following \cite{LiLiu}, we verify that it is indeed an extension of the usual Caputo's derivative. For $f$ as in the previous definition and such that $f'\in L^1([0,\infty))$, we see that
    \[
    \begin{aligned}
    \Phi_{-\alpha}*(H(t)f) &= \frac{1}{\Gamma(\alpha)}H(t)t^{-\alpha}*\frac{d}{dt}(H(t)f)\\
    &=\frac{1}{\Gamma(\alpha)}H(t)t^{-\alpha}*(H'(t)f_0 + H(t)f')
    \end{aligned}
    \]
    therefore,
    \[
    \partial^{\alpha}_tf = \Phi_{-\alpha}*(H(t)(f-f_0)) = \frac{1}{\Gamma(\alpha)}H(t)t^{-\alpha}*H(t)f' = \frac{1}{\Gamma(\alpha)}\int^t_0\frac{f'(s)}{(t-s)^{\alpha}}ds.
    \]
    
    In addition, for $f\in L^1_{loc}(\RR)$ and since $H(t)h_j(f-f_0)\in\mathcal{S}'(\RR)$ for all functions $h_j$ in the partition of unity from Definition \ref{def:conv}, by Plancherel's theorem we have that for any $\varphi\in C^\infty_c(\RR)$,
    \[
    \begin{aligned}
    \langle\partial^\alpha_tf,\varphi\rangle &= \sum_j \langle \Phi_{-\alpha}*(h_jH(f-f_0)),\varphi\rangle\\
    &= \sum_j \langle (i\omega)^{\alpha}\mathfrak{F}[h_jH(f-f_0)],\mathfrak{F}[\varphi]\rangle\\
    &= \sum_j \langle \mathfrak{F}^{-1}\left[(i\omega)^{\alpha}\mathfrak{F}[h_jH(f-f_0)]\right],\varphi\rangle.
    \end{aligned}
    \]
    Denoting $(\partial^\alpha_tf)_j := \mathfrak{F}^{-1}\left[(i\omega)^{\alpha}\mathfrak{F}[h_jH(f-f_0)]\right]$ we have\begin{equation}\label{eq:Fourier_Caputo}
    \langle\partial^\alpha_tf,\varphi\rangle = \sum_j \langle (\partial^\alpha_tf)_j,\varphi\rangle,
    \end{equation}
    where, formally, we can write
    \begin{equation}\label{eq:Fourier_Caputo_a}
     (\partial^\alpha_tf)_j = \frac{1}{2\pi}\int\int^\infty_0 e^{i(t-s)\omega}(i\omega)^\alpha h_j(s)(f(s)-f_0)dsd\omega.
    \end{equation}
\begin{remark}\label{rmk:partition_unity}
    Without loss of generality, and for simplicity, we will assume that the locally finite and compactly supported partition of unity involved in the next computations is such that $h_0(0)\neq 0$ and $\text{supp}(h_j)\cap\{0\}=\emptyset$ for all $j\neq 0$.
\end{remark}
    
    \begin{remark}\label{rmk:Caputo_der}
    The integrals in \eqref{eq:Fourier_Caputo_a} are non-trivial and absolutely convergent (thus, rigorously defined by \eqref{eq:Fourier_Caputo_a}) for each $h_j$ such that $\supp(h_j)\subset \RR_+$. Furthermore, the function
    \[
    t\mapsto \frac{1}{2\pi}\int\int^\infty_0 e^{i(t-s)\omega}(i\omega)^\alpha h_j(s)f_0dsd\omega= f_0\cdot\mathfrak{F}^{-1}\left[(i\omega)^{\alpha}\mathfrak{F}[h_j](\omega)\right](t)
    \]
    is smooth in $[0,\infty)$ for all $j\geq 1$, therefore
    \[
    (\partial^\alpha_tf)_j = \frac{1}{2\pi}\int\int^\infty_0 e^{i(t-s)\omega}(i\omega)^\alpha h_j(s)f(s)dsd\omega\quad\text{mod}\quad C^\infty([0,\infty)).
    \]
    For $j=0$ instead, the integral has to be interpreted in the following sense of distributional limit,
    \begin{equation}\label{eq:Fourier_Caputo_b}(\partial^\alpha_tf)_0(t):=\overset{\mathcal{D}'(\RR)}{\lim_{\epsilon\to 0}}\;\frac{1}{2\pi}\int\int^\infty_0 e^{i(t-s)\omega}(i\omega)^\alpha h_0(s)(f(s)-f_0)\chi(\epsilon \omega)dsd\omega
    \end{equation}
    where $\chi\in \mathcal{S}(\RR)$ is such that $\chi(0)=1$. We will use the following notation
    \begin{equation}\label{eq:Fourier_Caputo_c}
    (\partial^\alpha_tf)_{0,\epsilon}(t) := \frac{1}{2\pi}\int\int^\infty_0 e^{i(t-s)\omega}(i\omega)^\alpha h_0(s)(f(s)-f_0)\chi(\epsilon \omega)dsd\omega,
    \end{equation}
    hence, \eqref{eq:Fourier_Caputo_b} rewrites as 
    \begin{equation}\label{eq:Fourier_Caputo_d}
(\partial^\alpha_tf)_0(t):=\overset{\mathcal{D}'(\RR)}{\lim_{\epsilon\to 0}} (\partial^\alpha_tf)_{0,\epsilon}(t).
\end{equation}
    \end{remark}
\end{definition}
\bigskip

\subsection{The initial value problem in PAT with fractional attenuation}

We begin with the singular-source formulation of the photoacoustic problem, representing rapid heat deposition of electromagnetic energy and the subsequent ultrasound propagation, which due to causality, we assume no waves are propagating at negative times. In addition, we assume that the attenuation integral runs from $-\infty$ to the present time $t>0$, which means that in principle the strength  of the attenuation depends on the whole past history of the wave field. We then arrive at the following inhomogeneous system 
 \begin{equation}\label{frac_PAT_inhomog}
\left\{\begin{array}{l}
\partial_t^2U - c^2\Delta U + \mathcal{A}[U] = \delta'(t)\otimes f(x),\\
U|_{t<0}=0.
\end{array}\right.
\end{equation}
for an attenuation operator defined in the distributional sense by $\mathcal{A}[U] := a(x)\left(U(t)*\Phi_{-\alpha}(t)\right)$.
This version of the fractional derivative coincides with the Riemann-Liouville and Caputo derivative from $-\infty$, for functions with sufficient regularity. 

The relation between the inhomogeneous problem and the IVP is established in the next lemma.
\begin{lemma}
    Let $u$ be a smooth solution to the IVP \eqref{frac_PAT} in $\RR^n\times(0,\infty)$. Then, $U(x,t)=H(t)u(x,t)$ solves \eqref{frac_PAT_inhomog} in the sense of distributions.
\end{lemma}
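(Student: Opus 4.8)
The plan is to reduce the claim to a single distributional computation: apply the operator $\partial_t^2-c^2\Delta+\mathcal{A}[\cdot]$ to $U=H(t)u$, separate the contributions supported on $\{t=0\}$ (which must reproduce the singular source) from the locally integrable contributions on $\{t>0\}$ (which should cancel by virtue of the equation satisfied by $u$), and verify that what remains is exactly $\delta'(t)\otimes u_0(x)$. Since multiplication by $H(t)$ commutes with $\Delta$ and produces no boundary term, $c^2\Delta U=c^2H(t)\Delta u$, so the whole difficulty is concentrated in the two time-operators.

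First I would differentiate in time. Using that $u$ is smooth up to $t=0^+$ with $u(\cdot,0)=u_0$ and $\partial_t u(\cdot,0)=u_1=-\tfrac{a}{\Gamma(2-\alpha)}u_0$,
\[
\partial_t U = u_0\,\delta(t)+H(t)\,\partial_t u, \qquad \partial_t^2 U = u_0\,\delta'(t)+u_1\,\delta(t)+H(t)\,\partial_t^2 u .
\]
The key step is the attenuation term $\mathcal{A}[U]=a(x)\,(U*\Phi_{-\alpha})$. Here I would invoke the generalized convolution calculus of Definition \ref{def:conv}: by the identity $(D_tf)*g=f*(D_tg)=D_t(f*g)$ together with $\Phi_{-\alpha}=D_t\Phi_{1-\alpha}$ and $\Phi_{1-\alpha}=H(t)t^{-\alpha}/\Gamma(1-\alpha)$, one computes
\[
U*\Phi_{-\alpha} = \Phi_{1-\alpha}*(D_tU) = \Phi_{1-\alpha}*\big(u_0\,\delta+H(t)\partial_t u\big) = u_0\,\Phi_{1-\alpha}+\partial_t^\alpha u ,
\]
where $\partial_t^\alpha u=\Phi_{1-\alpha}*(H(t)\partial_t u)$ is exactly the Caputo derivative appearing in \eqref{frac_PAT} (cf. Definition \ref{def:Caputo}). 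Substituting and grouping the terms supported in $\{t>0\}$, the combination $H(t)\big(\partial_t^2u-c^2\Delta u+a\,\partial_t^\alpha u\big)$ vanishes identically because $u$ solves \eqref{frac_PAT}, leaving
\[
\partial_t^2 U - c^2\Delta U + \mathcal{A}[U] = u_0\,\delta'(t)+u_1\,\delta(t)+a(x)\,u_0\,\Phi_{1-\alpha}(t).
\]

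The hard part, and the place where the particular initial velocity earns its $\Gamma(2-\alpha)$ normalization, is the fractional boundary term $a u_0\,\Phi_{1-\alpha}$. As a consistency check I would first examine the classical limit $\alpha\to1$: there $\Phi_{1-\alpha}\to\delta$ and $\Gamma(2-\alpha)\to1$, so the term collapses to $a u_0\,\delta$ and is cancelled by $u_1\,\delta=-a u_0\,\delta$, recovering the known damped-wave reduction $u_1=-a u_0$. For $0<\alpha<1$, however, $\Phi_{1-\alpha}=H(t)t^{-\alpha}/\Gamma(1-\alpha)$ is a genuine locally integrable function, not a point mass, so it cannot be absorbed by the $\delta$ coming from the initial velocity alone. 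The crux is therefore to reconcile this fractional jump contribution with the short-time behaviour of $u$: I expect that the choice $u_1=-\tfrac{a}{\Gamma(2-\alpha)}u_0$ is precisely the fractional analogue of the classical velocity jump, encoding the $t^{1-\alpha}$ tail of $\partial_t u$ near the origin whose second derivative offsets $a u_0\,\Phi_{1-\alpha}$. Making this rigorous — pinning down the exact sense in which $u_1$ carries that tail and confirming the $\Gamma(2-\alpha)$ factor — is the main obstacle; I would resolve it by pairing the right-hand side against a test function $\varphi\in C_c^\infty(\RR)$ and verifying, with the absolute convergence from Remark \ref{rmk:Caputo_der}, that every contribution other than $\langle u_0\,\delta',\varphi\rangle$ integrates to zero.
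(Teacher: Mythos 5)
Your convolution-calculus computation is correct as far as it goes --- and, importantly, it is correct at exactly the point where the paper's own computation is not --- but the proposal does not prove the lemma. After substituting the equation you are left with
\[
\partial_t^2 U - c^2\Delta U + \mathcal{A}[U] \;=\; u_0\,\delta'(t) + u_1\,\delta(t) + a(x)\,u_0\,\Phi_{1-\alpha}(t),
\qquad \Phi_{1-\alpha}(t)=\frac{H(t)t^{-\alpha}}{\Gamma(1-\alpha)},
\]
and the lemma requires $u_1\delta + a u_0\Phi_{1-\alpha}=0$. You correctly note that $\Phi_{1-\alpha}$ is a locally integrable function, not a point mass; but your closing paragraph then only \emph{speculates} that the short-time behaviour of $u$ will offset $a u_0\Phi_{1-\alpha}$. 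Under the lemma's hypothesis this is impossible: $u$ is assumed smooth up to $t=0$, so $\partial_t^2(Hu)=u_0\delta'+u_1\delta+H\partial_t^2u$ contains no $t^{-\alpha}$-type singular part, and a Dirac mass at the origin plus a nonzero $L^1_{loc}$ function cannot sum to zero. Hence $u_1\delta+au_0\Phi_{1-\alpha}\neq 0$ wherever $au_0\neq 0$; the ``$t^{1-\alpha}$ tail of $\partial_t u$'' you invoke is precisely what a smooth solution cannot have (such a tail would force $\partial_t^2u\sim t^{-\alpha}$, contradicting smoothness). So the residual term is not an obstacle you failed to remove --- it is genuinely there, and the argument cannot be completed as written.

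It is worth seeing how the paper disposes of this same term, because that is where the two arguments diverge. The paper computes $\langle \mathcal{A}[Hu],\phi\rangle$ by testing and integrating by parts, and arrives at $\mathcal{A}[Hu]=Ha\partial_t^\alpha u+\frac{a u_0}{\Gamma(2-\alpha)}\delta(t)$; its $\delta$-term then cancels against $u_1\delta$ by the choice of initial velocity, which finishes the proof. However, in that computation the boundary term produced by integrating $u(t)\,\partial_t\bigl((r-t)^{1-\alpha}/(1-\alpha)\bigr)$ by parts is $-u(0)\,r^{1-\alpha}/(1-\alpha)$, and the factor $r^{1-\alpha}$ is absent from the paper's displayed chain; restoring it, the subsequent integration by parts in $r$ yields $\int_0^\infty \phi(r)\,u_0\,r^{-\alpha}/\Gamma(1-\alpha)\,dr=\langle u_0\Phi_{1-\alpha},\phi\rangle$ rather than $u_0\phi(0)/\Gamma(2-\alpha)$ --- i.e.\ exactly your term. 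A model-independent sanity check confirms this: since $\Phi_{-\alpha}=D_t\Phi_{1-\alpha}$ and $D_tH=\delta$, one has $H*\Phi_{-\alpha}=\Phi_{1-\alpha}$, so the attenuation of the pure Heaviside is the Riemann--Liouville kernel $Ht^{-\alpha}/\Gamma(1-\alpha)$ and not a multiple of $\delta$, contradicting the paper's formula with $u\equiv 1$. In short: where your attempt stalls, the paper's proof goes through only by virtue of this computational slip, and with the definition $\mathcal{A}[U]=a(U*\Phi_{-\alpha})$ the causal extension $Hu$ of a smooth solution of \eqref{frac_PAT} solves \eqref{frac_PAT_inhomog} only up to the extra source $u_1\delta+au_0\Phi_{1-\alpha}$. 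This discrepancy should be reported rather than patched.
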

\begin{proof}
For an arbitrary test function $\phi\in C^\infty_c(\RR^{n+1})$ we have
\[
\begin{aligned}
\langle \mathcal{A}[Hu],\phi\rangle &=\sum_j \langle a\Phi_{-\alpha}*(h_jHu),\phi\rangle\\
&= \sum_j\left\langle a(h_jHu)(t),\left\langle \textstyle\frac{-1}{\Gamma(1-\alpha)}\partial_t\left(H(t)t^{-\alpha}\right)(s),\phi(t+s)\right\rangle  \right\rangle \\
&= \sum_j\left\langle a(h_jHu)(t),\frac{1}{\Gamma(1-\alpha)}\int^\infty_t\frac{\phi'(r)dr}{(r-t)^\alpha} \right\rangle\\
&= \frac{1}{\Gamma(1-\alpha)}\int_{\RR^n}a(x)\sum_j\int^\infty_0\int^\infty_t\frac{h_j(t)u(t)\phi'(r)}{(r-t)^\alpha}drdtdx\\
&= \int_{\RR^n}a(x)\int^\infty_0\phi'(r)\left(\frac{1}{\Gamma(1-\alpha)}\int^r_0\Big(\sum_jh_j(t)\Big)u(t)\partial_t\left(\frac{(r-t)^{1-\alpha}}{1-\alpha}\right)dt\right)drdx\\
&= \int_{\RR^n}a(x)\int^\infty_0\phi'(r)\left(\frac{-1}{\Gamma(1-\alpha)}\int^r_0u'(t)\frac{(r-t)^{1-\alpha}}{1-\alpha}dt\right)drdx \\
&\quad - \int_{\RR^n}a(x)\int^\infty_0\frac{\phi'(r)u(0)}{(1-\alpha)\Gamma(1-\alpha)}drdx\\
&= \int_{\RR^n}a(x)\int^\infty_0\phi(r)\partial_r\left(\frac{1}{\Gamma(1-\alpha)}\int^r_0u'(t)\frac{(r-t)^{1-\alpha}}{1-\alpha}dt\right)drdx \\
&\quad + \int_{\RR^n}a(x)\frac{\phi(0)u(0)}{(1-\alpha)\Gamma(1-\alpha)}dx\\
&=\langle Ha\partial^\alpha_tu,\phi\rangle + \frac{1}{{(1-\alpha)\Gamma(1-\alpha)}}\langle a\delta u,\phi\rangle.
\end{aligned}
\]
In consequence, noticing that $(1-\alpha)\Gamma(1-\alpha) = \Gamma(2-\alpha)$ and using that $u$ solves \eqref{frac_PAT}, we obtain
\[
\begin{aligned}
\langle(\boxc+\mathcal{A}) U,\phi \rangle &= \langle\delta'u+2\delta u' +\textstyle\frac{1}{{ \Gamma(2-\alpha)}}a\delta u,\phi \rangle \\
&=\int_{\RR^n} \left[-u(0)\phi'(0) -u'(0)\phi(0) +2u'(0)\phi(0) + \textstyle\frac{1}{{ \Gamma(2-\alpha)}}au(0)\phi(0)\right]dx \\
&=\langle f(x)\delta'(t),\phi\rangle,
\end{aligned}
\]
this is, $U(x,t)$ is a solution to $\boxca U = f\delta'$ in $\RR^{n+1}$, and such that $U=0$ for $t<0$. 
\end{proof}

\subsection{Some properties of the forward problem}\label{sec:forward_pblm}
The existence and uniqueness of solutions for the initial boundary value problem (with null Dirichlet conditions), as well as standard regularity and higher regularity results, were proven in \cite{Ya2022}. We complement the analysis of the system  \eqref{frac_PAT} by establishing finite propagation speed, energy dissipation, and fractional Sobolev regularity. 

The finite propagation speed will allow us to treat the IVP in $(0,T)\times\RR^n$ as an initial boundary value problem (IBVP) with null Dirichlet conditions by taking a sufficiently large ball as the domain, while the energy dissipation will be an important feature in the study of a reconstruction formula.

We begin with a preliminary result that implies a positivity property for the fractional derivative operator.

\begin{lemma}\label{lemma:positivity}
Let $\alpha
\in(0,1)$ and $\cU\subset\RR^n$ be a bounded region with a smooth boundary. The fractional derivative kernel $\frac{t^{-\alpha}}{\Gamma(1-\alpha)}$ defines a positive semidefinite operator in $L^2((0,T)\times \cU)$, meaning that
\[
\left\langle \frac{1}{\Gamma(1-\alpha)}\int^t_0\frac{v(x,s)}{(t-s)^\alpha}ds,v(x,t) \right\rangle \geq 0,\quad \forall v\in L^2((0,T)\times \cU),
\]
where $\langle\cdot,\cdot\rangle$ stands for the inner product in $L^2((0,T)\times \cU)$. 
\end{lemma}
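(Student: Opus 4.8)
The plan is to regard the spatial variable as a parameter and reduce the claim to a one–dimensional positivity statement for the temporal kernel $t^{-\alpha}/\Gamma(1-\alpha)$, which I would then establish by exploiting the complete monotonicity of $t^{-\alpha}$. Since $v\in L^2((0,T)\times\cU)$, Fubini's theorem gives $v(x,\cdot)\in L^2(0,T)$ for a.e.\ $x\in\cU$, and the weakly singular operator
\[
I^{1-\alpha}w(t):=\frac{1}{\Gamma(1-\alpha)}\int_0^t\frac{w(s)}{(t-s)^\alpha}\,ds
\]
maps $L^2(0,T)$ into itself (the kernel $(t-s)^{-\alpha}$ is integrable for $\alpha\in(0,1)$, so this is immediate from Young's inequality). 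Consequently the inner product in the statement equals $\int_\cU Q_x\,dx$ with $Q_x:=\int_0^T (I^{1-\alpha}v(x,\cdot))(t)\,\overline{v(x,t)}\,dt$, and it suffices to show $\mathrm{Re}\,Q_x\geq 0$ for a.e.\ fixed $x$. I therefore suppress $x$ and work with a single $v\in L^2(0,T)$, writing $Q:=\int_0^T\int_0^t \frac{(t-s)^{-\alpha}}{\Gamma(1-\alpha)}\,v(s)\,\overline{v(t)}\,ds\,dt$ (for real-valued $v$, as in the application, the conjugates are superfluous).

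The key ingredient is the subordination (Laplace) representation
\[
\frac{t^{-\alpha}}{\Gamma(1-\alpha)}=\frac{1}{\Gamma(1-\alpha)\Gamma(\alpha)}\int_0^\infty \lambda^{\alpha-1}e^{-\lambda t}\,d\lambda,\qquad t>0,
\]
which encodes the complete monotonicity of $t\mapsto t^{-\alpha}$ for $\alpha\in(0,1)$. Inserting this into $Q$ and interchanging the order of integration — legitimate by Tonelli, since $\int_0^T\int_0^t (t-s)^{-\alpha}|v(s)|\,|v(t)|\,ds\,dt<\infty$ by the $L^2$–boundedness noted above — reduces the problem to
\[
Q=\frac{1}{\Gamma(1-\alpha)\Gamma(\alpha)}\int_0^\infty \lambda^{\alpha-1}\,Q_\lambda\,d\lambda,\qquad Q_\lambda:=\int_0^T\int_0^t e^{-\lambda(t-s)}\,v(s)\,\overline{v(t)}\,ds\,dt.
\]
Because $\lambda^{\alpha-1}>0$ and the prefactor is positive, it then suffices to prove $\mathrm{Re}\,Q_\lambda\geq 0$ for each $\lambda>0$.

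For the exponential kernel the positivity becomes transparent through an antiderivative trick. Setting $g(t):=\int_0^t e^{\lambda s}v(s)\,ds$, the function $g$ is absolutely continuous with $g(0)=0$ and $g'(t)=e^{\lambda t}v(t)$, whence $e^{-\lambda t}\overline{v(t)}=e^{-2\lambda t}\,\overline{g'(t)}$ and
\[
\mathrm{Re}\,Q_\lambda=\mathrm{Re}\int_0^T e^{-2\lambda t}\,\overline{g'(t)}\,g(t)\,dt=\frac{1}{2}\int_0^T e^{-2\lambda t}\frac{d}{dt}\lvert g(t)\rvert^2\,dt.
\]
Integrating by parts and using $g(0)=0$ gives
\[
\mathrm{Re}\,Q_\lambda=\frac{1}{2}e^{-2\lambda T}\lvert g(T)\rvert^2+\lambda\int_0^T e^{-2\lambda t}\lvert g(t)\rvert^2\,dt\geq 0,
\]
since $\lambda>0$. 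Combining the three displays yields $\mathrm{Re}\,Q\geq 0$, and integrating over $x\in\cU$ completes the argument.

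I expect the main obstacle to be conceptual rather than computational: identifying a representation that turns the weakly singular, non-symmetric kernel into a manifestly positive object. The complete–monotonicity/Laplace route does exactly this, after which everything reduces to a one-line integration by parts; the only technical care needed is the Tonelli justification for the $\lambda$–interchange and the mild singularity of the kernel near $t=s$, both controlled by the $L^2$–boundedness of $I^{1-\alpha}$. An alternative would be a Fourier-multiplier argument: extending $v$ by zero and using that the symbol of $I^{1-\alpha}$ is $(i\omega)^{\alpha-1}$, whose real part $\lvert\omega\rvert^{\alpha-1}\cos\!\big(\tfrac{\pi}{2}(1-\alpha)\big)$ is nonnegative for $\alpha\in(0,1)$; this is equivalent but requires slightly more care in justifying Plancherel since the kernel is not square-integrable.
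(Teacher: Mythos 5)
Your proof is correct, but it takes a genuinely different route from the paper's. The paper reduces to the one-dimensional statement in the same way (Fubini in the spatial variable), but then simply \emph{cites} the $L^2(0,T)$-coercivity of convolution with $t^{-\alpha}/\Gamma(1-\alpha)$ from Eggermont \cite{Eg1988}, carries out the argument for smooth functions, and recovers the general $L^2$ case by a density argument using the Young's-inequality bound. You instead prove the one-dimensional positivity from scratch: the subordination identity
\[
\frac{t^{-\alpha}}{\Gamma(1-\alpha)}=\frac{1}{\Gamma(1-\alpha)\Gamma(\alpha)}\int_0^\infty \lambda^{\alpha-1}e^{-\lambda t}\,d\lambda
\]
(i.e., complete monotonicity of the kernel) reduces the claim to positivity for each exponential kernel $e^{-\lambda(t-s)}$, which your antiderivative trick with $g(t)=\int_0^t e^{\lambda s}v(s)\,ds$ and one integration by parts settles, with the Tonelli interchange justified exactly as you say. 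This is the classical argument that completely monotone kernels are of positive type, and it buys you two things: the lemma becomes self-contained (no appeal to \cite{Eg1988}), and you avoid the smooth-approximation/density step entirely, since $g$ is absolutely continuous for any $v(x,\cdot)\in L^2(0,T)$ and the integration by parts is valid at that level of regularity. What the paper's approach buys is brevity, and, via the cited reference, the stronger coercivity statement (not just nonnegativity), though only nonnegativity is used in Theorem 2.
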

\begin{proof}
 The $L^2(0,T)$--coercivity of the convolution operator with kernel $q(t) = \frac{t^{-\alpha}}{\Gamma(1-\alpha)}$ and $\alpha\in(0,1)$ is proven in \cite{Eg1988}. For smooth functions, it is clear from a pointwise application of the previous and Fubini's theorem that
\[
\left\langle \frac{1}{\Gamma(1-\alpha)}\int^t_0\frac{v(x,s)}{(t-s)^\alpha}ds,v(x,t) \right\rangle = \int_\cU \left(\int^T_0 (q*v)vdt\right)dx \geq 0.
\]
The general case follows by density of smooth function in $L^2((0,T)\times U)$ and the next inequality which is obtained from Young's convolution inequality:
\[
\left|\left\langle \frac{1}{\Gamma(1-\alpha)}\int^t_0\frac{u(x,s)}{(t-s)^\alpha}ds,v(x,t) \right\rangle\right|\leq \frac{T^{1-\alpha}}{\Gamma(2-\alpha)}\|u\|_{L^2((0,T)\times \cU)}\|v\|_{L^2((0,T)\times \cU)}.
\]
\end{proof}

\begin{theorem}\label{thm:energy_frac_wave}
The system \eqref{frac_PAT} with $\alpha\in(0,1)$ has finite propagation speed and is dissipative in the sense that for any $\cU\subset\RR^n$ bounded region with a smooth boundary, if $u$ solves \eqref{frac_PAT} and $u=0$ on $(0,T)\times\partial \cU$ then
\[
E_\cU(t)=E_\cU(0) - 2\langle a\partial^\alpha_tu,c^{-2}\partial_tu\rangle
\]
with
\begin{equation}\label{eq:energy}
E_\cU(t) := \int_{\cU} |u_t|^2c^{-2}dx + \int_{\cU} |\nabla u|^2dx.
\end{equation}
and $\langle a\partial^\alpha_tu,c^{-2}\partial_tu\rangle\geq 0$.
\end{theorem}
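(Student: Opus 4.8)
The plan is to establish the three assertions --- finite propagation speed, the energy identity, and the non-negativity of the dissipation term --- separately. The energy identity and the dissipation follow from a standard multiplier computation combined with Lemma~\ref{lemma:positivity}, whereas finite propagation speed requires a more delicate, localized use of that positivity.

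For the energy identity I would multiply the equation $\partial_t^2 u - c^2\Delta u + a\partial_t^\alpha u = 0$ by $c^{-2}\partial_t u$ and integrate over $\cU$. The first term gives $\frac{1}{2}\frac{d}{dt}\int_\cU c^{-2}|\partial_t u|^2\,dx$; integrating the Laplacian term by parts and using the Dirichlet condition $u|_{\partial\cU}=0$ (hence $\partial_t u|_{\partial\cU}=0$) annihilates the boundary flux and produces $\frac{1}{2}\frac{d}{dt}\int_\cU|\nabla u|^2\,dx$. Collecting terms yields $\frac{d}{dt}E_\cU(t) = -2\int_\cU c^{-2}a\,\partial_t^\alpha u\,\partial_t u\,dx$, and integrating in time from $0$ to $t$ gives the stated identity, with $\langle\cdot,\cdot\rangle$ read as the $L^2((0,t)\times\cU)$ pairing. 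Since the solution has only finite (fractional Sobolev) regularity, I would first run this computation on smooth approximating data and pass to the limit, invoking the regularity results quoted from \cite{Ya2022}. For the non-negativity, observe that $\partial_t^\alpha u = q*\partial_t u$ with $q(t)=t^{-\alpha}/\Gamma(1-\alpha)$, so that $\langle a\partial_t^\alpha u, c^{-2}\partial_t u\rangle = \int_\cU a(x)c^{-2}(x)\big(\int_0^t (q*\partial_s u)\,\partial_s u\,ds\big)\,dx$; because $a\geq 0$, $c^{-2}>0$, and the inner time integral is non-negative for each fixed $x$ by Lemma~\ref{lemma:positivity} applied to $v=\partial_t u$, the whole expression is $\geq 0$.

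Finite propagation speed is the part I expect to be the main obstacle, precisely because the attenuation operator is nonlocal in time. I would run the classical shrinking-cone energy argument: fix $(t_0,x_0)$, set $B(t)=B(x_0,R-\sigma t)$ with $\sigma\geq\sup c$ over the relevant region, and let $e(t)=\int_{B(t)}(c^{-2}|\partial_t u|^2+|\nabla u|^2)\,dx$. Differentiating and using the equation, the boundary flux $2\,\partial_t u\,\partial_\nu u$ is dominated by the transport loss $\sigma(c^{-2}|\partial_t u|^2+|\nabla u|^2)$ on $\partial B(t)$ whenever $\sigma\geq c$ (by Young's inequality together with $|\partial_\nu u|\leq|\nabla u|$), so those boundary terms combine to a non-positive contribution, leaving $\frac{d}{dt}e(t)\leq -2\int_{B(t)}c^{-2}a\,\partial_t^\alpha u\,\partial_t u\,dx$.

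The difficulty is that this remaining memory term is not pointwise-in-time signed, while Lemma~\ref{lemma:positivity} supplies positivity only after integration in $t$ over a \emph{fixed} domain. The resolution is to integrate the inequality in time and exchange the order of integration: for each fixed $x$ the time variable ranges over $[0,T(x)]$ with $T(x)=(R-|x-x_0|)/\sigma$, and since $(q*\partial_t u)(t)$ depends only on $\partial_s u$ for $s\le t$, the inner integral is exactly the one-variable coercivity expression on $[0,T(x)]$, hence $\geq 0$. Weighting by the non-negative factor $a(x)c^{-2}(x)$ and integrating in $x$ shows $\int_0^\tau\int_{B(t)}c^{-2}a\,\partial_t^\alpha u\,\partial_t u\,dx\,dt\geq 0$, so $e(\tau)\leq e(0)$; taking the initial data to vanish on $B(x_0,R)$ forces $e\equiv 0$ on the cone, which is finite propagation speed. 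I would isolate the single-domain, variable-endpoint version of the coercivity used here as the genuinely essential input, as it is what allows the nonlocal term to be controlled fiber-by-fiber in $x$.
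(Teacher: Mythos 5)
Your proposal is correct and follows essentially the same route as the paper: the energy identity via the multiplier $c^{-2}\partial_t u$ with Dirichlet boundary conditions and a density argument, non-negativity of the dissipation via Lemma~\ref{lemma:positivity}, and finite propagation speed by a shrinking-cone energy estimate whose memory term is handled exactly as you describe --- integrating in time, applying Fubini, and using the one-dimensional Abel-kernel coercivity fiber-by-fiber in $x$ over the variable interval $[0,\tau(x)]$. The only cosmetic difference is that the paper works with general cones $\{\phi(x)<\tau-t\}$ for $c^2|\nabla\phi|^2\leq 1$ (via the coarea formula) rather than Euclidean balls shrinking at speed $\sigma\geq\sup c$, which yields the sharper, sound-speed-adapted propagation cone but changes nothing essential in the argument.
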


\begin{proof}
The finite propagation speed is a consequence of the previous and the coarea formula. Indeed, given any smooth function $\phi(x)$ such that $c^{2}(x)|\nabla\phi(x)|^2\leq 1$, we consider the backward cone
\[
\mathcal{C} = \{(x,t)\in\RR^n\times (0,\tau):\phi(x)<\tau-t\}
\]
whose temporal cross-sections and their boundaries are given respectively by
\[
\mathcal{C}(t) = \{x\in\RR^n: \phi(x)<\tau-t\}\quad\text{and}\quad \partial\mathcal{C}(t) = \{x\in\RR^n: \phi(x)=\tau-t\}.
\]
For any smooth solution $u$ to \eqref{frac_PAT} we denote by $E_{\mathcal{C}(t)}$ its energy inside the region $\mathcal{C}(t)$, therefore
\[
\begin{aligned}
\frac{d}{dt} E_{\mathcal{C}(t)} &= 2\int_{\mathcal{C}(t)}\left(c^{-2} u_tu_{tt} + \nabla u_t\cdot \nabla u\right)dx - \int_{\partial \mathcal{C}(t)}\left(c^{-2}|u_t|^2 + |\nabla u|^2\right)|\nabla \phi|^{-1}dS\\
&=  2\int_{\mathcal{C}(t)}u_t\left(c^{-2} u_{tt} - \Delta u\right)dx + \int_{\partial\mathcal{C}(t)}u_t \partial_\nu u dS\\
&\quad- \int_{\partial \mathcal{C}(t)}\left(c^{-2}|u_t|^2 + |\nabla u|^2\right)|\nabla \phi|^{-1}dS\\
&\leq -2\int_{\mathcal{C}(t)}ac^{-2}u_t\partial^\alpha_tudx.
\end{aligned}
\]
Integrating in time over the interval $(0,\tau)$ we get
\[
E_{\mathcal{C}(\tau)} \leq E_{\mathcal{C}(0)} - 2\int_{\mathcal{C}(0)}a(x)c^{-2}(x)\left(\int^{\tau(x)}_{0}u_t(x,t)\int^t_0\frac{(t-s)^{-\alpha}}{\Gamma(1-\alpha)}u_t(x,s)dsdt\right)dx,
\]
where $\tau(x) = \sup\{t\in(0,\tau):(x,t)\in \mathcal{C}\}$. Since the last term on the right-hand side is nonpositive, we conclude that $E_{\mathcal{C}(0)}=0$ implies $E_{\mathcal{C}(\tau)}=0$, which means there is finite propagation speed.

For the last part of the lemma, we again assume enough regularity and then argue by density. Let's multiply \eqref{frac_PAT} by $c^{-2}u_t$ and integrate over $\cU$, where we recall $u=0$ on $(0,T)\times\partial \cU$. We then have
\[
\frac{d}{dt}E_\cU(t) = 2\int_{\cU} u_t(c^{-2}u_{tt} - \Delta u)dx = -2\int_\cU au_t(\partial^\alpha_tu)dx.
\]
Integrating in time and denoting $v = c^{-1}a^{1/2}u_t$, we obtain that 
\[
E_\cU(T) - E_\cU(0) = -2\int_{(0,T)\times \cU}\left(\frac{1}{\Gamma(1-\alpha)}\int^t_0\frac{v(x,s)}{(t-s)^\alpha}ds\right)v(x,t)dxdt \leq 0.
\]
\end{proof}

Let's precise now the energy spaces of initial conditions. 
Consider $A = c^2(x)\Delta$ endowed with null Dirichlet boundary conditions at the smooth boundary $\partial\Omega$ of some open and bounded region $\Omega\subset\RR^n$. The domain of the operator $A$ is given by
\[
D(A) = H^2(\Omega)\cap H^1_0(\Omega).
\]
We also set $D(A^0) = L^2(\Omega)$ and
\[
D(A^{\theta})= \{f\in L^2(\Omega):A^{\theta} f\in L^2(\Omega)\},\quad\theta\in(0,1),
\]
the respective domains of the fractional-order operators $A^{\theta}$. We define the energy spaces of initial conditions as
\[
\mathcal{H}^1_0(\Omega):= D(A^{1/2})\times D(A^0)\quad\text{and}\quad \mathcal{H}^2_0(\Omega):= D(A)\times D(A^{1/2}),
\]
ans for $s\in(0,1)$ their interpolation spaces 
\[
\begin{aligned}
\mathcal{H}^{1+s}_0(\Omega)= D(A^{(1+s)/2})\times D(A^{s/2}),
\end{aligned}
\]
which are continuously embedded in $H^{1+s}(\Omega)\times H^{s}(\Omega)$. In particular, the space $\mathcal{H}^1_0(\Omega)$ is topologically equivalent to the completion of $C^\infty_c(\Omega)\times C^\infty_c(\Omega)$ under the energy norm
\begin{equation}\label{def:energy_norm}
\|(f,g)\|^2_{H^1_0(\Omega)\times L^2(\Omega)}:= \int_{\Omega}(|\nabla f(x)|^2+|g(x)|^2)dx.
\end{equation}
For more details on interpolation, Sobolev spaces and the defintion of $A^{\theta}$ and $D(A^\theta)$, see Appendix \ref{Appx:Sobolev}.

\begin{theorem}\label{thm:regularity}
    Let $u$ be the unique solution to 
    \begin{equation}\label{IBVP_att}
        \left\{\begin{array}{ll}
        \square_{c,\alpha}u = F,& (0,T)\times\Omega\\
        u=0,&(0,T)\times\partial\Omega\\
        (u,\partial_tu)|_{t=0} = (f,g),& \Omega,
        \end{array}\right.
    \end{equation}
    with $(f,g)\in \cH^{1+s}_0(\Omega)$, 
    $F\in L^1([0,T];D(A^{s/2}))$
    for some $s\in(0,1)$.
    Then, 
    \[
    u\in C^1([0,T];D(A^{s/2}))\cap C([0,T];D(A^{(1+s)/2})),
    \]
    and the following continuous dependence holds
    \[
\begin{aligned}
\|(u,\partial_tu)\|_{H^{1+s}(\Omega)\times H^s(\Omega)}&\leq C\left(\|(f,g)\|_{\mathcal{H}^{1+s}_0(\Omega)}+\|F\|_{L^1([0,T];D(A^{s/2}))}\right).
\end{aligned}
\]
\end{theorem}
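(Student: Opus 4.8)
The plan is to treat the attenuation term as an inhomogeneity and reduce the problem to the conservative wave equation $\boxc$, for which the asserted energy estimate is classical. Writing $\tilde F := F - a(x)\partial_t^\alpha u$, the solution $u$ of \eqref{IBVP_att} solves the unattenuated Dirichlet problem $\boxc u = \tilde F$ with the same data $(f,g)$. Expanding in the orthonormal basis of eigenfunctions of the (weighted) self-adjoint operator $A=c^2\Delta$ with Dirichlet conditions and using Duhamel's formula together with Minkowski's integral inequality, one obtains
\[
\|(u,\partial_t u)(t)\|_{\cH^{1+s}_0(\Omega)} \le C\Big(\|(f,g)\|_{\cH^{1+s}_0(\Omega)} + \int_0^t \|\tilde F(\tau)\|_{D(A^{s/2})}\,d\tau\Big),
\]
valid for $(f,g)\in\cH^{1+s}_0(\Omega)$ and $\tilde F\in L^1([0,T];D(A^{s/2}))$, which simultaneously yields $u\in C([0,T];D(A^{(1+s)/2}))\cap C^1([0,T];D(A^{s/2}))$ by strong continuity of the wave group and continuity of the Duhamel integral. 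The first thing to verify is that $\tilde F$ indeed lies in $L^1([0,T];D(A^{s/2}))$: since $a$ is smooth and $D(A^{s/2})$ is (with equivalent norm) the subspace of $H^s(\Omega)$ cut out by the homogeneous Dirichlet condition when $s>1/2$ and coincides with $H^s(\Omega)$ when $s<1/2$, multiplication by $a$ is bounded on $D(A^{s/2})$ (it preserves both $H^s$-regularity and the vanishing boundary trace), so $\|a\partial_t^\alpha u\|_{D(A^{s/2})}\le C\|\partial_t^\alpha u\|_{D(A^{s/2})}$.

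Next I would bound the attenuation contribution in terms of the solution itself. Writing $\Psi(\tau):=\|(u,\partial_t u)(\tau)\|_{\cH^{1+s}_0(\Omega)}$, so that $\|\partial_t u(\sigma)\|_{D(A^{s/2})}\le \Psi(\sigma)$, the integral definition of the Caputo derivative and the triangle inequality in $D(A^{s/2})$ give
\[
\|\partial_t^\alpha u(\tau)\|_{D(A^{s/2})} \le \frac{1}{\Gamma(1-\alpha)}\int_0^\tau \frac{\|\partial_t u(\sigma)\|_{D(A^{s/2})}}{(\tau-\sigma)^\alpha}\,d\sigma \le \frac{1}{\Gamma(1-\alpha)}\int_0^\tau \frac{\Psi(\sigma)}{(\tau-\sigma)^\alpha}\,d\sigma.
\]
Integrating in $\tau$ over $(0,t)$ and applying Fubini's theorem to the singular kernel yields
\[
\int_0^t \|\partial_t^\alpha u(\tau)\|_{D(A^{s/2})}\,d\tau \le \frac{1}{\Gamma(1-\alpha)}\int_0^t \Psi(\sigma)\frac{(t-\sigma)^{1-\alpha}}{1-\alpha}\,d\sigma \le \frac{T^{1-\alpha}}{\Gamma(2-\alpha)}\int_0^t \Psi(\sigma)\,d\sigma,
\]
where I used $(1-\alpha)\Gamma(1-\alpha)=\Gamma(2-\alpha)$.

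Combining the two displays with the energy estimate and the boundedness of multiplication by $a$ produces the linear integral inequality
\[
\Psi(t) \le C\big(\|(f,g)\|_{\cH^{1+s}_0(\Omega)} + \|F\|_{L^1([0,T];D(A^{s/2}))}\big) + C'\int_0^t \Psi(\sigma)\,d\sigma,
\]
and Grönwall's inequality then gives $\Psi(t)\le C e^{C'T}\big(\|(f,g)\|_{\cH^{1+s}_0(\Omega)}+\|F\|_{L^1([0,T];D(A^{s/2}))}\big)$, which is the asserted continuous-dependence bound. To obtain the regularity $u\in C^1([0,T];D(A^{s/2}))\cap C([0,T];D(A^{(1+s)/2}))$ together with a self-contained construction, I would run a Picard iteration: let $u^{(0)}$ solve the conservative problem with source $F$ and data $(f,g)$, and define $u^{(k+1)}$ as the solution of $\boxc u^{(k+1)} = F - a\partial_t^\alpha u^{(k)}$ with the same data. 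Each iterate lies in the regularity class by the conservative theory, and applying the same Fubini/Grönwall estimate to the differences $u^{(k+1)}-u^{(k)}$ shows the sequence is Cauchy in $C([0,T];\cH^{1+s}_0(\Omega))$; its limit is a solution with the claimed regularity and estimate, which by uniqueness coincides with the solution from \cite{Ya2022}. The main obstacle is the bookkeeping around the nonlocal Caputo term: verifying that multiplication by $a$ is bounded on $D(A^{s/2})$ across the threshold $s=1/2$ where the Dirichlet condition enters, and checking that the singular time kernel $(t-\sigma)^{-\alpha}$ becomes integrable after the Fubini rearrangement so that Grönwall applies. Once these points are in place the remaining steps are routine.
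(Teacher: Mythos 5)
Your proposal is correct in substance, but it takes a genuinely different route from the paper's proof. The paper also treats the attenuation as a perturbation and sets up a fixed-point equation $\UU_\alpha(t)=\VV(t)[f,g,F]+(\widetilde{\mathcal{G}}\UU_\alpha)(t)$, but it never estimates directly in the fractional space: it imports from \cite{Ya2022} the bounds showing that $\widetilde{\mathcal{G}}^m$ is eventually a contraction on $C([0,T];\cH^1_0)$ and on $C([0,T];\cH^2_0)$, interpolates these two facts to get a contraction on the vector-valued interpolation space $X_0=[C([0,T];\cH^2_0),C([0,T];\cH^1_0)]_{1-s}$, and then shows via a K-functional comparison that $X_0\subset C([0,T];\cH^{1+s}_0(\Omega))$, from which the continuous dependence follows by absorbing $\delta_m\|\UU\|_{X_0}$ for $m$ large. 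You instead work directly in $C([0,T];\cH^{1+s}_0(\Omega))$: the conservative Duhamel estimate at fractional order, the Abel-kernel Fubini bound $\int_0^t\|\partial_t^\alpha u\|_{D(A^{s/2})}\,d\tau\le \frac{T^{1-\alpha}}{\Gamma(2-\alpha)}\int_0^t\Psi(\sigma)\,d\sigma$, and Gr\"onwall, run through a Picard iteration to avoid circularity and to produce the regularity class. What your route buys: it is more elementary and self-contained (no interpolation of spaces of time-dependent functions, no K-functional embedding), and the constant comes out of Gr\"onwall in one step rather than from the large-$m$ contraction. What the paper's route buys: it never needs the one point you correctly flag as your "main obstacle," namely boundedness of multiplication by $a$ on $D(A^{s/2})$ across the boundary-condition threshold, because interpolating whole solution operators at the integer levels makes that compatibility automatic. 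Note, though, that your obstacle has a one-line resolution with the same tool: since $s/2\in(0,1/2)$ one has $D(A^{s/2})=[D(A^{1/2}),D(A^{0})]_{1-s}$, and multiplication by the smooth $a$ is bounded on both $D(A^{0})=L^2(\Omega)$ and $D(A^{1/2})=H^1_0(\Omega)$ (it preserves the vanishing trace), hence on $D(A^{s/2})$ by interpolation; this uniformly covers the endpoint case $s=1/2$, where $D(A^{1/4})=H^{1/2}_{00}(\Omega)$, so no case analysis on $s$ versus $1/2$ is needed. With that supplied, and the limit of the iteration identified with the solution of \cite{Ya2022} by uniqueness in the energy space, your argument is a complete proof of the theorem.
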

\begin{remark}
    Notice above that for $(f,g)$ and $F$ as in the statement of the theorem, they in particular satisfy that $(f,g)\in H^1_0(\Omega)\times L^2(\Omega)$ and $F\in L^1([0,T];L^2(\Omega))$, therefore, the existence and uniqueness of the solution in the energy space $H^1_0(\Omega)\times L^2(\Omega)$ follows from \cite{Ya2022}.
\end{remark}
\begin{proof}
Let's consider the solution operator for the unattenuated wave equation, 
\begin{equation}\label{def:sol_op_hom}
\UU(t):(f,g)\mapsto (u(t),\partial_tu(t)),\quad \text{for $t>0$}
\end{equation}
where $u$ solves
 \begin{equation}\label{eq:unatt_wave}
\left\{\begin{array}{ll}
\partial_t^2u - c^2\Delta u  = 0,& (0,T)\times\Omega \\
u=0,& (0,T)\times\partial\Omega\\
(u,\partial_tu)|_{t=0} = (f,g),& \Omega.
\end{array}\right.
\end{equation}
By interpolation, and standard regularity and higher regularity results, we have that the map $t\mapsto \UU(t)$ belongs to $C([0,T];\mathcal{B}(\cH^{1+s}_0(\Omega)))$, where for a given Banach space $X$ write $\mathcal{B}(X)$ to denote the Banach space of bounded linear operators from $X$ onto itself. 

Furthermore, the Duhamel's principle implies that the inhomogeneous solution operator
\[
\VV(t):(f,g,F)\mapsto (v(t),\partial_tv(t)),
\]
with $v$ solution to
 \begin{equation}\label{eq:unatt_wave}
\left\{\begin{array}{ll}
\partial_t^2v - c^2\Delta v  = F,& (0,T)\times\Omega \\
v=0,& (0,T)\times\partial\Omega\\
(v,\partial_tv)|_{t=0} = (f,g),& \Omega,
\end{array}\right.
\end{equation}
can be written in the form
\begin{equation}\label{def:sol_op_inhom}
\VV(t)[f,g,F] = \UU(t)[f,g] + \int^t_0\UU(t-s)[0,F(s)]ds,
\end{equation}
from where it is clear that for a fixed 
$F\in L^1([0,T],D(A^{s/2}))$, 
\[
t\mapsto \VV(t)[\cdot,\cdot,F]\in C([0,T];\mathcal{B}(\cH^{1+s}_0(\Omega))).
\]
For more details about integration on Banach spaces, we refer the reader to Appendix \ref{Appx:int_Banach}. Moreover, by interpolation it is clear that
\[
\|\VV(\cdot)[f,g,F]\|_{C([0,T];\mathcal{H}^{1+s}_0(\Omega))}\leq C\left(\|(f,g)\|_{\mathcal{H}^{1+s}_0(\Omega)}+\|F\|_{L^1([0,T];D(A^{s/2}))}\right).
\]

Following \cite{Ya2022}, we use the previous two operators along with Duhamel's principle to write the solution $\UU_\alpha(t)=(u(t),\partial_tu(t))$ to \eqref{IBVP_att} as
\begin{equation}\label{eq:op_Ualpha}
\UU_\alpha(t)=\VV(t)[f,g,F] + \int^t_0\int^s_0\UU(t-s)Q(s-\tau)\UU_\alpha(\tau)d\tau ds,
\end{equation}
where $Q(t)$ is defined as
\[
Q(t):=-\frac{t^{-\alpha}}{\Gamma(1-\alpha)}\left(\begin{matrix}0&0\\0&a(x)\end{matrix}\right),\quad \text{for}\quad t\in (0,T].
\]
It follows from Young's convolution inequality that the operation of convolution against the kernel $Q(t)$ maps continuously $L^1((0,T);\cH^{1+s}_0(\Omega))$ into itself.

Denoting by $
(\widetilde{\mathcal{G}}\UU_\alpha)(t)$ the double integral on the right-hand side of \eqref{eq:op_Ualpha} (for $f,g,F$ fixed) and $(\mathcal{G}\UU_\alpha)(t):= \VV(t)[f,g,F] + (\widetilde{\mathcal{G}}\UU_\alpha)(t)$, equation \eqref{eq:op_Ualpha} rewrites as $\UU_\alpha = \mathcal{G}\UU_\alpha$, which means that we are looking for a fixed point for the operator $\mathcal{G}$ in some subspace $X_0\subset C([0,T];\cH^{1+s}_0(\Omega))$. 

The Banach fixed point theorem demands to find a contracting iterate $\mathcal{G}^m$ for some $m\geq 1$. It follows directly from \cite{Ya2022} that there is a constant $C_1>0$ such that for all $m\in\NN$,
\[
\|\widetilde{\mathcal{G}}^m\VV\|_{C([0,T];\cH^1_0)}\leq \frac{C^m_1T^{m(2-\alpha)-1}}{\Gamma(m(2-\alpha)-1)}\|\VV\|_{C([0,T]:\cH^1_0)},\quad\forall \VV\in C([0,T];\cH^1_0),
\]
and similarly, there is another constant $C_2>0$ such that
\[
\|\widetilde{\mathcal{G}}^m\VV\|_{C([0,T];\cH^2_0)}\leq \frac{C^m_2T^{m(2-\alpha')-1}}{\Gamma(m(2-\alpha')-1)}\|\VV\|_{C([0,T];\cH^2_0)},\quad\forall \VV\in C([0,T];\cH^2_0),
\]
with $\alpha' = \frac{1+\alpha}{2}$. The constants $C_i$, $i=1,2$, depend on $\Omega, T, \alpha$ and $a$.

Denoting by $\delta_{1,m},\delta_{2,m}>0$ the respective constants on the right-hand side of the previous inequalities, by properties of the Gamma function one deduces that $\delta_{i,m}<1$ for $i=1,2$, provided that one takes $m\geq 1$ large enough.

Setting
\[
X_0 = [C([0,T];\cH^2_0),C([0,T];\cH^1_0)]_{1-s},
\]
which is a Banach space when equipped with the interpolation norm $\|\cdot\|_{X_0}$ (see Appendix \ref{Appx:Sobolev} for its definition), we get from the previous that
\[
\|\widetilde{\mathcal{G}}^m\VV\|_{X_0}\leq \delta_{2,m}^{s}\delta_{1,m}^{1-s}\|\VV\|_{X_0},\quad\forall \VV\in X_0.
\]
This implies that for any pair $\VV,\WW\in X_0$,
\[
\|\mathcal{G}^m\VV-\mathcal{G}^m\WW\|_{X_0}\leq \delta_{2,m}^{s}\delta_{1,m}^{1-s}\|\VV-\WW\|_{X_0},
\]
which means $\mathcal{G}^m$ is a contraction in $X_0$, and consequently, there is a fixed point $\UU\in X_0$ and only remains to show that
\[
X_0\subset C([0,T];\cH^{1+s}_0(\Omega)).
\]

A function $\UU(t,x)$ belonging to the interpolation space $X_0$ can be decomposed as $\UU = \UU_2 + \UU_1$ with $\UU_i\in C([0,T];\cH^{i}_0(\Omega))$, $i=1,2$. Moreover, for all $t\in[0,T]$ and $\rho>0$ we have
\[
\begin{aligned}
\|\UU_{2}(t)\|^2_{\cH^{2}_0(\Omega)}+\rho^2 \|\UU_{1}(t)\|^2_{\cH^{1}_0(\Omega)}\leq \|\UU_2\|^2_{C([0,T];\cH^{2}_0(\Omega))} +\rho^2\|\UU_1\|^2_{C([0,T];\cH^{1}_0(\Omega))},
\end{aligned}
\]
thus, taking square root and infimum with respect to the decomposition $\UU = \UU_2 + \UU_1$ we deduce the following inequality between the $K$-functionals for the respective pairs of spaces $(\cH^{2}_0(\Omega),\cH^{1}_0(\Omega)))$ and $(C([0,T];\cH^{2}_0(\Omega)),C([0,T];\cH^{1}_0(\Omega)))$:
\[
K(\rho,\UU(t))\leq K(\rho,\UU),\quad\forall t\in[0,T].
\]
The definitions of the K-functional and the interpolation norm are included in Appendix \ref{Appx:Sobolev}.

It follows directly then that $\|\UU(t)\|_{\cH^{1+s}_0(\Omega)}\leq \|\UU\|_{X_0}$ for all $t\in[0,T]$ and subsequently,
\[
\|\UU\|_{C([0,T];\cH^{1+s}_0(\Omega))}\leq \|\UU\|_{X_0}.
\]
The continuity inequality follows from the previous, the fixed-point formulation of equation \eqref{IBVP_att}, the continuous embedding of $\mathcal{H}^{1+s}_0(\Omega)$ into $H^{1+s}(\Omega)\times H^s(\Omega)$, and the fact that for $m$ large enough, $\|\tilde{\mathcal{G}}^m\|_{\mathcal{B}(X_0)}\leq \delta_m<1$ with $\delta_m\to 0$ as $m\to\infty$. Indeed, 
\[
\begin{aligned}
\|\UU\|_{X_0}=\|\mathcal{G}^m\UU\|_{X_0}
&\leq\sum^{m-1}_{k=0}\|\tilde{\mathcal{G}}\VV(\cdot)[f,g,F]\|_{X_0} + \|\tilde{\mathcal{G}}^m\UU\|_{X_0}\\
 &\leq C\left( \|\VV(\cdot)[f,g,F]\|_{X_0} + \delta_m\|\UU\|_{X_0}\right)\\
  &\leq C\left( \|\VV(\cdot)[f,g,F]\|_{C([0,T];\mathcal{H}^{1+s}_0(\Omega))} + \delta_m\|\UU\|_{X_0}\right),
\end{aligned}
\]
thus, by taking $m$ large enough we can absorb the last term on the right and and obtain
\[
\begin{aligned}
\|\UU\|_{H^{1+s}(\Omega)\times H^s(\Omega)}&\leq C \|\UU\|_{X_0}\\
&\leq C\|\VV(\cdot)[f,g,F]\|_{C([0,T];\mathcal{H}^{1+s}_0(\Omega))}\\
&\leq C\left(\|(f,g)\|_{\mathcal{H}^{1+s}_0(\Omega)}+\|F\|_{L^1([0,T];D(A^{s/2}))}\right).
\end{aligned}
\]
\end{proof}

\section{Uniqueness}
In this section, we generalize the uniqueness result presented in \cite{Ya2022} by allowing more general foliations by strictly convex surfaces. In \cite{Ya2022}, the authors address the constant sound speed case, and it is clear, as pointed out there, that the argument can be extended to variable speeds subject to the existence of a function with strictly convex level sets (for the sound speed metric $c^{-2}dx^2$). A common particular instance of this requirement consists in imposing the sound speed to satisfy an inequality of the form
\[
\frac{(x-x_0)\cdot\partial c(x)}{c(x)}<1,\quad\forall x\in\Omega.
\]
for some fixed $x_0\in\RR^n$. In geometric terms, the previous inequality is equivalent to requiring the Euclidean spheres $\{x:|x-x_0|=r\}$, with $r>0$, to be strictly convex under the sound speed metric.

The main result of this section states that provided the existence of an admissible foliation by strictly convex hypersurfaces, null boundary observations at $(0,T)\times\Gamma$ imply a vanishing initial condition $u_0$ in a domain of influence associated to $(0,T)\times\Gamma$ and the foliation.

Below, we define what we mean by an admissible foliation of $\Omega$ with respect to the observation region $\Gamma$. We point out that this is a more restrictive type of foliation than those considered previously in \cite{StUh2013}. The reason for considering this family of foliations lies in the fact that, due to the time-convolution term, we do not have a John's (or double-cone) unique continuation-type result at our disposal, and we are forced to move in small increments and via local unique continuation results, perpendicular to the foliation. This contrasts with standard unique continuation results where the zero-set typically advances across balls for the sound speed metric, which might not be strictly convex for large radius.

\begin{definition}\label{def:adm_foliation}
    Let $\Omega_1\Supset \Omega$ be a larger domain with a smooth boundary. 
    We say that a one-parameter family of smooth, compact, and oriented hypersurfaces $\Sigma(\Omega,\Gamma)=\{\Sigma_s\}_{s\in[\underline{s},\overline{s}]}$ is 
    an {\em admissible foliation of $(\Omega,\Gamma)$}
    if the following conditions are met: 
    \begin{itemize}
        \item[(a)] (uniformly continuous family) for every $\epsilon>0$ there is $\delta>0$ such that the Hausdorff distance $\text{dist}_H(\Sigma_s,\Sigma_{s'})<\epsilon$ whenever $|s- s'|<\delta$;
        \item[(b)] each $\Sigma_s$ divides $\Omega_1$ into two open connected parts which we denote as $\Sigma_{s}^{\text{int}}$ and $\Sigma_{s}^{\text{ext}}$, with the later region satisfying that its closure contains $\partial\Omega_1$. Moreover, $\Omega\subset \Sigma^{\text{int}}_{\underline{s}}$, $\overline{\Sigma_{s}^{\text{int}}}\subset\Sigma_{s'}^{\text{int}}$ for all $s>s'$, and $\Sigma_s\cap (\partial\Omega\setminus \Gamma) = \emptyset$ for all $s$;
        \item[(c)] (normal vector field and its integral curves) for any $s$ and $x\in\Sigma_s$ we let $\nu(x)$ be the interior unit normal vector, this is,  pointing towards $\Sigma_s^{\text{int}}$. This gives to each leaf $\Sigma_s$ a global orientation. The resulting vector field $\nu$ is assumed to be $C^2$ and we require that its integrals curves  passing through $\Omega\cap\Sigma_{\overline{s}}^{\text{ext}}$ intersect the observation region $\Gamma$ only once;

        \item[(d)] (flat coordinates) for every $s\in[\underline{s},\overline{s}]$ and at each point $x\in\Sigma_s$ there is a flat chart $(U,\varphi)$ containing $x$. In other words, there is a continuous function $\zeta(s')$ such that each leaf $\Sigma_{s'}$ intersecting $U$ takes the local form $\{x=(x',x^n) : x^n=\zeta(s')\}$.
        
        \item[(e)] (boundary normal coordinates) for every $s\in[\underline{s},\overline{s}]$ there is an $\epsilon_s>0$  such that boundary normal coordinates can be prescribed in a collar neighborhood of $\Sigma_s$ of the form $\{x\in\Omega_1:\dist(x,\Sigma_s)<\epsilon_s\}$;

        \item[(f)] (strict convexity) for all $s$, the surfaces $\Sigma_s$ are strictly convex with respect to the sound speed metric $c^{-2}dx^2$ ($c=1$ outside $\Omega$).
        In other words, for every $s\in[\underline{s},\overline{s}]$ there is $\kappa_s$ (principal curvature) such that in boundary normal coordinates, where the metric takes the form $g_{\alpha\beta}(x',x^n)dx^\alpha dx^\beta + (dx^ n)^2$,
        \[
        \partial_{x^n}g^{ij}\xi_i\xi_j\geq \kappa_s|\xi|^2_g,\quad\forall \xi\in\RR^n.
        \]
        More details about convexity of hypersurfaces can be found in Appendix \ref{Appdx:convexity}.
    \end{itemize} 
\end{definition}

We denote by $\dist(x,y)$ the distance function for the sound speed metric $c^{-2}dx^2$ and by $\dist_\nu(x,y)$ the length (in the sound speed metric) of the integral curve segment of $\nu$ connecting $x$ and $y$ (if exists). We will write by $\dist_\nu(x,\Sigma_s)$ the length of the segment of integral curve passing through $x$ and $\Sigma_s$, which is well defined for all $x\in \Omega\cap\Sigma^{\text{ext}}_{\overline{s}}$ and $C^2$ due to condition (c).

Next, we define a region of influence associated with the foliation and the observation set $(0,T)\times\Gamma$.

\begin{definition}\label{def:region_influence}
    Denoting 
    \[
    \begin{aligned}
    &s_0=\inf\{s\in[\underline{s},\overline{s}]:\Sigma_s\cap\overline{\Omega}\neq \emptyset\}\quad\text{and}\\
    &s_T=\sup\{s\in [\underline{s},\overline{s}]:\forall x\in \Sigma_s\cap \overline{\Omega},\; T>\dist_\nu(x,\Sigma_{s_0})\},
    \end{aligned}
    \]
    we call $D_{\Sigma,T} := \bigcup_{s\in[s_0,s_T)} \Sigma_s\cap\Omega$ the {\em domain of influence} associated with the foliation, and for the positive $C^2$-function 
    \[
\tau(x) := T - \dist_\nu(x,\Sigma_{s_0}),\quad x\in D_{\Sigma,T},
\]
we define the {\em region of influence} for the pair $(\Sigma,T)$ as
\[
\mathcal{D}_{\Sigma,T}:=\{(t,x)\in (0,T)\times D_{\Sigma,T}\;:\: 0<t<\tau(x)\}.
\]
\end{definition}
An typical example of a valid configuration of a domain, observation region and foliation, and some of its relevant components are illustrated in figure \ref{fig:foliation}.

\begin{figure}[htp]
    \centering
    \includegraphics[width=8cm]{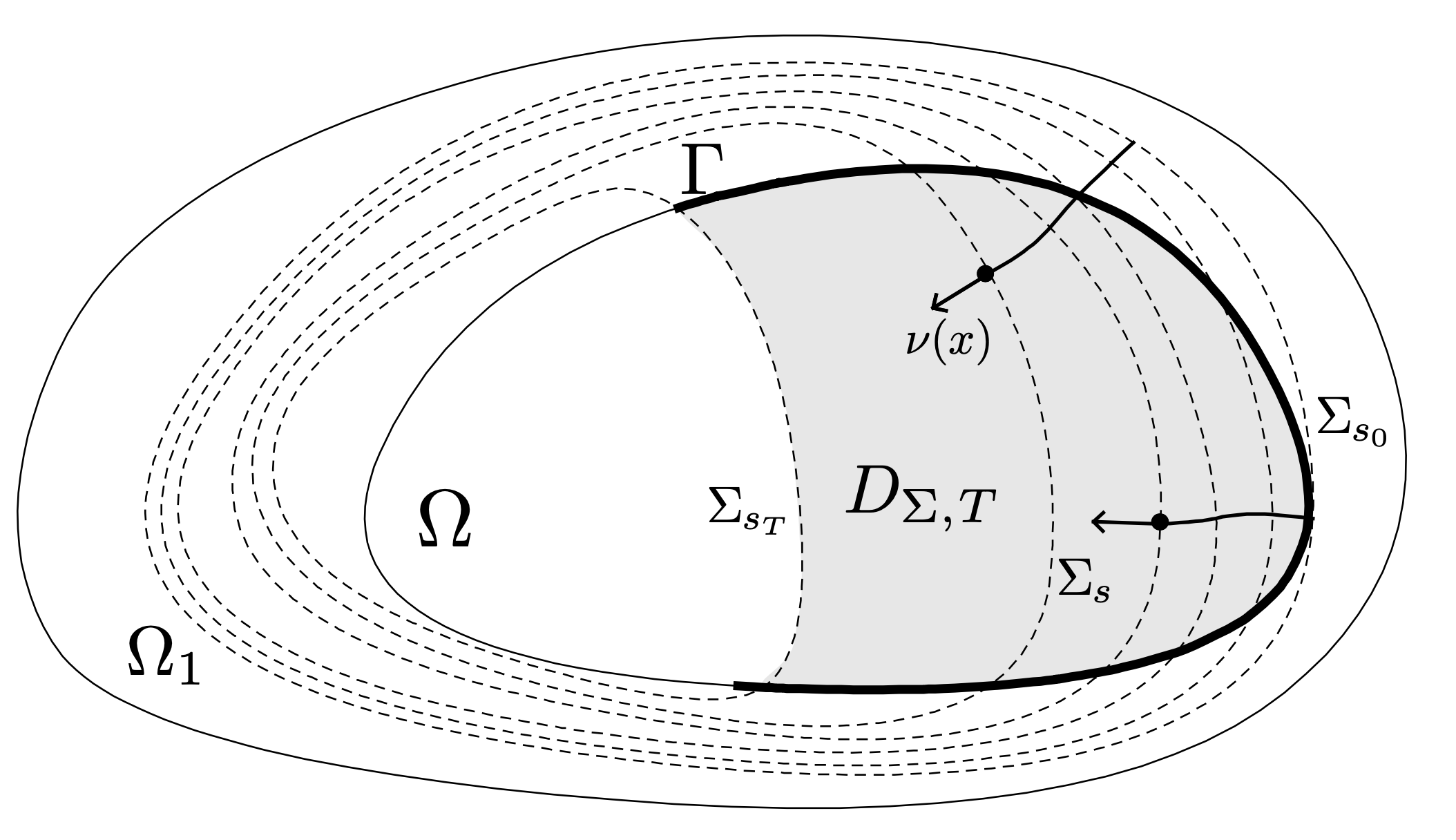}
    \caption{An example of an admissible foliation and its domain of influence. Two integral curve segments for the unit normal vector field $\nu$ are depicted, as well as the limiting leaves $\Sigma_{s_0}$ and $\Sigma_{s_T}$.}
    \label{fig:foliation}
\end{figure}

We provide a few remarks concerning some of the conditions imposed on the foliation.

\begin{remark}
Condition (b) implies that $\Gamma':=\bigcup_{s\in[s_0,s_T]} \Sigma_s\cap \partial\Omega$ is a closed subset of $\Gamma$.
\end{remark}

\begin{remark}
Regarding condition (c), the assumption on the intersection between the integral curves and the observation region $\Gamma$ is not intended to be optimal; it is introduced solely to simplify the exposition.
\end{remark}

\begin{remark}\label{rmk:flat_coor} ({\em uniform choice of parameters})
    Due to the compactness of the region $\overline{\mathcal{D}_{\Sigma,T}}$, there are positive constants $\epsilon_0,\delta_0,\overline{\kappa},\underline{\kappa}$ such that: at any point $x\in\Sigma_s$ with $s\in[s_0,s_T]$ we can prescribe flat coordinates in the open balls $B_{\delta_0}(x)$; for all $s\in[s_0,s_T]$ there are boundary normal coordinates in $\{x\in\overline{\mathcal{D}_{\Sigma,T}}:\dist(x,\Sigma_s)<\epsilon_0\}$ with respect to $\Sigma_s$; and finally, the principal curvatures $\kappa_s$ satisfy that $\underline{\kappa}\leq \kappa_s\leq \overline{\kappa}$ for all $s\in[s_0,s_T]$. We will assume, without loss of generality, that $\epsilon_0<\delta_0$.
\end{remark}

Prior to presenting the main result of this section and the associated uniqueness property for the inverse photoacoustic tomography problem, we introduce a final piece of notation. 

Let's set $\Omega_s = \Omega\cap \Sigma^{\text{int}}_s$ and $\Gamma_s = (\Gamma\cap\Sigma_s^{\text{int}})\cup(\Sigma_s\cap \overline{\Omega})$; for $s\in[s_0,s_T)$ and $x\in\mathcal{D}_{\Sigma,T}\cap \Omega_s$ close enough to $\Sigma_s$, we denote by $\pi_s(x)$ its projection onto $\Sigma_s$, that is, the closest point to $x$ in $\Sigma_s$ for the sound speed metric, and by $\pi^\nu_s(x)$ the projection of $x$ onto $\Sigma_s$ along the integral curves of $\nu$.

\begin{theorem}\label{thm:uniqueness_att}
    Let $\{\Sigma_s\}_{s\in[\underline{s},\overline{s}]}$ be an admissible foliation of $(\Omega,\Gamma)$ according to definition \ref{def:adm_foliation}, and let $\mathcal{D}_{\Sigma,T}$ be the region of influence associated to the pair $(\Sigma,T)$ for an observation time $T>0$. If $u$ is a solution to \eqref{frac_PAT} with $\supp(u_0)\subset \Omega$ and null Cauchy data on $(0,T)\times\Gamma$, then, $u=0$ in $\mathcal{D}_{\Sigma,T}$
    and in particular
    $\text{\rm supp}(u_0) \cap D_{\Sigma,T}=\emptyset$.
\end{theorem}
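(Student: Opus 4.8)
The plan is to combine the finite speed of propagation with a connectedness (sweep) argument along the foliation, using at each leaf a local unique continuation step powered by a Carleman estimate, and to treat the attenuation term $a(x)\partial_t^\alpha u$ as a perturbation that is nonlocal in time but local in space. First I would invoke the finite propagation speed from Theorem~\ref{thm:energy_frac_wave} to restrict attention to the bounded domain $\Omega_1\Supset\Omega$, so that $u$ may be regarded as the solution of an IBVP with homogeneous Dirichlet data on $\partial\Omega_1$ over a time interval covering $\mathcal{D}_{\Sigma,T}$, and I would rewrite the equation as $\square_c u=-a(x)\partial_t^\alpha u$. The single analytic fact that makes the memory term manageable is the pointwise-in-space bound already implicit in Lemma~\ref{lemma:positivity}: since $\partial_t^\alpha u=I^{1-\alpha}\partial_t u$ is a Riemann–Liouville fractional integral of positive order $1-\alpha$, Young's inequality gives, for a.e.\ $x$,
\[
\|\partial_t^\alpha u(x,\cdot)\|_{L^2(0,T')} \le C(T',\alpha)\,\|\partial_t u(x,\cdot)\|_{L^2(0,T')}.
\]
I then define the sweep set $\Theta:=\{s\in[s_0,s_T]:\ u\equiv 0 \text{ on } \mathcal{D}_{\Sigma,T}\cap\{x\in\Sigma_s^{\text{ext}}\}\}$ and aim to prove $\Theta=[s_0,s_T]$, whence $u\equiv0$ on $\mathcal{D}_{\Sigma,T}$ by Definition~\ref{def:region_influence}. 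Nonemptiness is immediate, since $\Omega\subset\overline{\Sigma_{s_0}^{\text{int}}}$ forces $\Sigma_{s_0}^{\text{ext}}\cap\Omega$ to be negligible, so $s_0\in\Theta$; relative closedness follows from the uniform continuity of the family (condition (a)) together with the closedness of vanishing on the nested exhaustion $\{\Sigma_s^{\text{ext}}\}$.

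The heart of the proof is the relative openness of $\Theta$: given $s_1\in\Theta$, I must show $u\equiv0$ on a definite interior one-sided neighborhood of $\Sigma_{s_1}$, so that $[s_0,s_1+\eta]\subset\Theta$. Working in the boundary normal coordinates of condition (e) and Remark~\ref{rmk:flat_coor} on a uniform collar of $\Sigma_{s_1}$, strict convexity (condition (f)) supplies the sign $\partial_{x^n}g^{ij}\xi_i\xi_j\ge\underline{\kappa}|\xi|_g^2$ that renders the level sets of the normal coordinate pseudoconvex for $\square_c$. With a weight $e^{2\lambda\varphi(x)}$ convexified from the normal coordinate and a spacetime cutoff $\chi$, the interior Carleman estimate reads
\[
\lambda\int e^{2\lambda\varphi}\big(|\partial_t u|^2+|\nabla u|^2\big)\,dx\,dt \;\lesssim\; \int e^{2\lambda\varphi}|\square_c u|^2\,dx\,dt + R_\chi,
\]
where $R_\chi$ collects the cutoff errors. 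Substituting $\square_c u=-a\partial_t^\alpha u$ and applying the $L^2$-in-time bound above, the principal right-hand term is dominated by $C\int e^{2\lambda\varphi}a^2|\partial_t u|^2$, which is absorbed into the left-hand side once $\lambda$ is large; the errors $R_\chi$ are supported either on $\Sigma_{s_1}^{\text{ext}}$ (where $u\equiv0$ since $s_1\in\Theta$) or where the weight is exponentially smaller. This forces $u\equiv0$ on $\{0<\dist_\nu(\cdot,\Sigma_{s_1})<\eta\}$ on the interior side for the time window dictated by $\tau$, and the uniform constants of Remark~\ref{rmk:flat_coor} make $\eta$ uniform over $\Sigma_{s_1}$. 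Near the intersection $\Sigma_{s_1}\cap\Gamma$ — where no exterior vanishing is available — a second, boundary-adapted Carleman estimate is used instead, drawing on the vanishing Dirichlet and Neumann traces on $(0,T)\times\Gamma$ (recall condition (b) forbids $\Sigma_s$ from meeting $\partial\Omega\setminus\Gamma$). Here the time profile $\tau(x)=T-\dist_\nu(x,\Sigma_{s_0})$ of Definition~\ref{def:region_influence} is exactly what keeps the available interval nonempty as the front advances at unit speed; and because $\tau$ truncates only from above, the backward history $u(x,\cdot)|_{[0,t]}$ defining $\partial_t^\alpha u(x,t)$ for $t<\tau(x)$ stays within the controlled region, so the memory term is causally consistent with the sweep.

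I expect the main obstacle to be precisely the interaction of the temporally nonlocal Caputo term with the localization intrinsic to a \emph{local} Carleman estimate: the spacetime cutoff $\chi$ does not commute with $\partial_t^\alpha$, so one must control $\partial_t^\alpha(\chi u)-\chi\,\partial_t^\alpha u$ and verify that these commutator contributions are again bounded by the weighted energy over the full backward interval rather than producing spurious boundary-in-time terms near $t=0$. This is what forces reliance on the refined distributional formulation of $\partial_t^\alpha$ in Definition~\ref{def:Caputo} and its partition-of-unity decomposition, and it is the structural reason two distinct Carleman estimates — one interior, one adapted to the boundary piece along $\Gamma$ — are needed rather than a single global, John/double-cone type unique continuation statement, which the loss of time-reversal symmetry caused by the memory term precludes.
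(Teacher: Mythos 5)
Your overall architecture (restriction via finite speed, a sweep along the foliation, local Carleman estimates near each leaf, absorption of the memory term for a large parameter) matches the spirit of the paper's layer-stripping proof, but the step that carries all the weight --- openness of your sweep set $\Theta$ --- rests on a Carleman estimate that does not hold in the form you state, and on an accounting of the time variable that is missing. Your ``interior Carleman estimate'' uses a purely spatial weight $\varphi(x)$ in the classical (unregularized) form. The leaves $\Sigma_s$ are \emph{timelike} surfaces for $\square_c$, and no classical Carleman inequality with a time-independent weight gives unique continuation across such surfaces: H\"ormander's strong pseudoconvexity condition fails at characteristic points $(\theta,\xi)$ with $\theta\neq 0$, which is exactly why the paper needs two different estimates in sequence. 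Its Lemma \ref{lemma1_thm5} uses the space-time weight $\varphi_y=(R-x^n)^2-\beta t^2-r^2$ (pseudoconvex thanks to strict convexity, decreasing in $t$), which only yields vanishing near the leaf for \emph{small} times; Lemma \ref{lemma2_thm5} then propagates this vanishing in time via Tataru's Carleman estimate with the Gaussian regularization $e^{-\frac{\epsilon}{2\tau}|D_t|^2}$, which requires pseudoconvexity only on $\{(t,x,0,\xi)\}$, exploits time-independence (partial analyticity) of the coefficients, and uses the commutation identity \eqref{eq:commutation}, valid precisely because the weight there is \emph{linear} in $t$. Your single estimate cannot produce vanishing on the whole window $0<t<\tau(x)$; without a Tataru-type step there is no mechanism to advance in time at all, and the sign condition from strict convexity does not repair this.

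Two further gaps. First, you work directly with $u$, whose Cauchy data at $t=0$ are $\left(u_0,-\frac{a}{\Gamma(2-\alpha)}u_0\right)$ --- nonzero and unknown --- so any localization near $t=0$ in your inequality produces uncontrolled trace terms. The paper instead applies the first Carleman estimate to $\bar u=\int_0^t u\,ds$, which solves \eqref{frac_PAT_int} with $\bar u(0)=0$; the estimate (valid for $v(0,\cdot)=0$, with the trace $\tau\int e^{2\tau\varphi(0)}|v_t(0)|^2\,dx$ appearing on the \emph{left}) then simultaneously controls $u_0$ and absorbs the source $b(t,x)u_0$ using $\varphi(t)\le\varphi(0)$. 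Your proposal has no analogue of this device. Second, the quantitative bookkeeping: each layer costs a loss in the admissible time window, and the paper's iteration closes only because that loss is \emph{quadratic} in the step size (Lemmas \ref{lemma2_thm5} and \ref{lemma3_thm5}), so the accumulated loss $\tilde g(x)<\epsilon T$ can be made arbitrarily small by refining the partition. A per-step loss that is merely linear in the step --- which is what a generic Carleman region or finite-speed argument yields --- would consume a fixed fraction of $\tau(x)$ and the sweep would never exhaust $\mathcal{D}_{\Sigma,T}$. Your remark that $\tau(x)=T-\dist_\nu(x,\Sigma_{s_0})$ ``keeps the available interval nonempty'' is the right intuition, but in the paper it is a \emph{consequence} of this quadratic error propagation, not a substitute for it.
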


\begin{corollary}\label{cor:injectivity}
    Let $\Sigma(\Omega,\Gamma)=\{\Sigma_s\}_{s\in[\underline{s},\overline{s}]}$ be an admissible foliation according to definition \ref{def:adm_foliation} and $\mathcal{K}\subset\Sigma^{\text{ext}}_{\overline{s}}\cap\Omega$ a compact subdomain with smooth boundary. Provided that
    \[
    T>T_0(\mathcal{K},\Sigma):= \sup\{\dist_{\nu}(x,\Sigma_{s_0}):x\in\mathcal{K}\},
    \]
   then the observation map $\Lambda_\alpha$ is injective in $\mathcal{H}^1_0(\mathcal{K}):=\{f\in H^1(\Omega):\supp(f)\subseteq\mathcal{K}\}$. In the case of complete data, $\Gamma=\partial\Omega$, we might replace $\mathcal{K}$ with $\Omega$ and $T_0(\Omega,\Sigma)$ become the length of the largest integral curve segment contained in $\Omega$.
\end{corollary}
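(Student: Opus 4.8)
The plan is to deduce injectivity directly from the unique continuation principle of Theorem~\ref{thm:uniqueness_att}, so the argument is essentially a bookkeeping of the geometry together with one genuinely analytic step, namely the passage from the observed Dirichlet trace to full Cauchy data. By linearity of $\Lambda_\alpha$, injectivity on $\mathcal{H}^1_0(\mathcal{K})$ is equivalent to triviality of its kernel; so I would fix $u_0\in\mathcal{H}^1_0(\mathcal{K})$ with $\Lambda_\alpha u_0=0$, let $u$ be the corresponding solution of \eqref{frac_PAT}, and aim to show $u_0=0$. Note that both initial conditions, $u_0$ and $u_1=-\frac{a}{\Gamma(2-\alpha)}u_0$, are supported in $\mathcal{K}\subset\Omega$, and that the hypothesis $\Lambda_\alpha u_0=0$ means precisely that the Dirichlet trace $u|_{(0,T)\times\Gamma}$ vanishes.

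The hard part is upgrading this to the \emph{null Cauchy data} required by Theorem~\ref{thm:uniqueness_att}: I must also produce $\partial_\nu u|_{(0,T)\times\Gamma}=0$. Here I would exploit that the attenuation is supported inside $\Omega$, so in the exterior region $\RR^n\setminus\overline{\Omega}$ (where $a\equiv 0$ and $c\equiv 1$) the field $u$ solves the \emph{unattenuated} wave equation with vanishing initial data. In that region the classical double-cone (John/Holmgren-type) unique continuation is available --- precisely the tool that fails inside $\Omega$ because of the memory term, and whose absence motivates the foliation approach. Combining finite propagation speed (Theorem~\ref{thm:energy_frac_wave}), which lets me work in a bounded collar of $\Gamma$ and guarantees that $u$ vanishes ahead of the wavefront, with the vanishing Dirichlet trace on $(0,T)\times\Gamma$, I would propagate the zero set through the exterior up to $\Gamma$, concluding that $u\equiv 0$ on the exterior side of $(0,T)\times\Gamma$. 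By continuity of $u$ and $\partial_\nu u$ across $\Gamma$ this yields $\partial_\nu u|_{(0,T)\times\Gamma}=0$, hence null Cauchy data. I expect this Dirichlet-to-Cauchy upgrade to be the only delicate point of the proof.

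With null Cauchy data in hand, Theorem~\ref{thm:uniqueness_att} applies and gives $u=0$ in the region of influence $\mathcal{D}_{\Sigma,T}$, and in particular $\supp(u_0)\cap D_{\Sigma,T}=\emptyset$. It then remains to check the geometric inclusion $\mathcal{K}\subset D_{\Sigma,T}$, and this is where the time condition $T>T_0(\mathcal{K},\Sigma)=\sup\{\dist_\nu(x,\Sigma_{s_0}):x\in\mathcal{K}\}$ enters: by Definition~\ref{def:region_influence}, $T>T_0(\mathcal{K},\Sigma)$ ensures that every point of $\mathcal{K}$ is swept by a leaf $\Sigma_s$ of parameter $s<s_T$ before time $T$, so that $\mathcal{K}\subset D_{\Sigma,T}$. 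Thus $\supp(u_0)\subset\mathcal{K}\subset D_{\Sigma,T}$, which together with $\supp(u_0)\cap D_{\Sigma,T}=\emptyset$ forces $\supp(u_0)=\emptyset$, i.e. $u_0=0$. This proves injectivity.

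Finally, for the complete-data case $\Gamma=\partial\Omega$ I would observe that the exterior unique continuation step now takes place around the entire boundary, so no portion of $\partial\Omega$ is excluded and the Cauchy-data upgrade is unobstructed; one may then take $\mathcal{K}$ to be any compact subdomain exhausting $\Omega$, and $T_0(\Omega,\Sigma)$ reduces to the length of the longest integral-curve segment of $\nu$ contained in $\Omega$. The same chain of implications yields injectivity of $\Lambda_\alpha$ on $\mathcal{H}^1_0(\Omega)$. Throughout, the geometric inclusion $\mathcal{K}\subset D_{\Sigma,T}$ is routine once the definitions of $s_T$ and $\tau$ are unwound, so the substance of the corollary lies entirely in invoking Theorem~\ref{thm:uniqueness_att} and in the exterior argument that furnishes its hypotheses.
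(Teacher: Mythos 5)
Your skeleton is the intended one --- the paper gives no separate proof of Corollary \ref{cor:injectivity}, treating it as an immediate consequence of Theorem \ref{thm:uniqueness_att} together with the observation that $T>T_0(\mathcal{K},\Sigma)$ forces $\supp(u_0)\subset\mathcal{K}\subset D_{\Sigma,T}$ --- and you are right that the one substantive analytic step is converting the vanishing Dirichlet trace $\Lambda_\alpha u_0=0$ into the null Cauchy data that Theorem \ref{thm:uniqueness_att} assumes. The gap is in how you carry out that step. In the partial-data case, the tools you invoke (finite propagation speed plus exterior unique continuation for the free wave equation) do not yield ``$u\equiv 0$ on the exterior side of $(0,T)\times\Gamma$.'' The exterior field is driven not only by the (vanishing) trace on $\Gamma$ but also by the trace on the unobserved part $\partial\Omega\setminus\Gamma$, through which the waves generated by $u_0$ generically exit; after a time comparable to the exterior distance, those waves sweep back over the outer side of $\Gamma$, and neither finite speed nor the one-sided Dirichlet condition prevents their normal derivative from being nonzero on $\Gamma$ at such times. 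What the exterior domain-of-dependence argument (Proposition 2 of \cite{Finch-Patch-Rakesh-2004}, the standard tool here) actually gives is vanishing of $u$ only at exterior points $(t,p)$ whose backward cone, measured in the exterior distance $\dext$ (shortest paths avoiding $\Omega$), meets the boundary cylinder inside $(0,T)\times\Gamma$; restricted to $\Gamma$ itself this produces null Cauchy data only on the smaller set $\{(t,x):x\in\Gamma,\ 0<t<\dext(x,\partial\Omega\setminus\Gamma)\}$, not on all of $(0,T)\times\Gamma$.

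Consequently your argument, as written, proves the corollary only in the complete-data case $\Gamma=\partial\Omega$, where the exterior initial-boundary value problem with zero initial data and zero Dirichlet trace on the whole boundary has only the trivial solution, so $u\equiv 0$ outside $\Omega$ and $\partial_\nu u=0$ on the full cylinder. For $\Gamma\subsetneq\partial\Omega$ you would need either to strengthen the measurement so that $\Lambda_\alpha$ records Cauchy rather than Dirichlet data, or to verify that the cone-restricted null Cauchy data suffices to launch the layer-stripping, i.e.\ that $\tau(x)=T-\dist_\nu(x,\Sigma_{s_0})\le\dext(x,\partial\Omega\setminus\Gamma)$ on the relevant portion of $\Gamma$, since the first step of the proof of Theorem \ref{thm:uniqueness_att} only uses null Cauchy data on $\{(t,x):x\in\Gamma,\ 0<t<\tau(x)\}$. (In fairness, the paper itself is silent on this bridge; its theorem simply postulates null Cauchy data.) A smaller point: the inclusion $\mathcal{K}\subset D_{\Sigma,T}$ is also less routine than you suggest, because $s_T$ is defined by a condition holding uniformly over each whole leaf $\Sigma_s\cap\overline{\Omega}$, while $T_0(\mathcal{K},\Sigma)$ controls $\dist_\nu(\cdot,\Sigma_{s_0})$ only on $\mathcal{K}$; the implication requires that the leaves meeting $\mathcal{K}$ contain no points of $\overline{\Omega}$ at $\nu$-distance at least $T$ from $\Sigma_{s_0}$.
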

The proof of the theorem is based on a layer-stripping argument and boundary unique continuation for the attenuated wave equation. This last property follows from two types of Carleman estimates associated to the systems satisfied by $u(t,x)$ and the one by $\bu(t,x) := \int^t_0u(s,x)ds$,
namely,
 \begin{equation}\label{frac_PAT_int}
\left\{\begin{array}{l}
\partial_t^2\bu - c^2\Delta \bu + a(x)\partial^\alpha_t\bu = b(t,x)u_0(x),\quad \text{in}\:(0,T)\times\Omega,\\
(\bu,\partial_t\bu)|_{t=0} = (0,u_0),
\end{array}\right.
\end{equation}
with $b(t,x) = \frac{a(x)}{\Gamma(2-\alpha)}(t^{1-\alpha}-1)$. 
The main ingredients of the proof are the content of the next technical lemmas which we shall state and leave their proofs after we prove Theorem \ref{thm:uniqueness_att}.

\begin{lemma}\label{lemma1_thm5}
    For any $s\in[s_0,s_T)$, assume that $\supp(u_0)\subset\overline{\Omega}_s$, and $\bu$ satisfies \eqref{frac_PAT_int} in $(0,T)\times\Omega_s$ with null Cauchy data on the piece-wise smooth boundary  consisting of $(0,T)\times(\Sigma_s^{\text{int}}\cap \Gamma)$ and $\{(t,x):x\in\Sigma_s\cap\Omega,\; 0<t<\tilde{\tau}(x)\}$, for some positive $C^2$-function $\tilde{\tau}:\Sigma_s\to(0,T]$. 
    Then, there are $\epsilon_1,\epsilon_2>0$ such that $\bu$ (and also $u$) vanishes in the set
    \[
    \{(t,x)\in \RR_+\times\Omega_s\;:\: 0<\dist(x,\Sigma_{s})<\epsilon_1,\quad 0<t<\epsilon_2\}.
    \]
\end{lemma}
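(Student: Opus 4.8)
The plan is to deduce the statement from a \emph{local} boundary unique continuation across the strictly convex leaf $\Sigma_s$, obtained by combining the two local Carleman estimates established below --- one for the homogeneous system satisfied by $u$, and one for the inhomogeneous system \eqref{frac_PAT_int} satisfied by $\bu$. First I would fix an arbitrary $x_0\in\Sigma_s\cap\Omega$ and, using conditions (d)--(e) of Definition~\ref{def:adm_foliation} together with the uniform parameters of Remark~\ref{rmk:flat_coor}, introduce boundary normal coordinates $(x',x^n)$ in a collar of $\Sigma_s$, normalized so that locally $\Sigma_s=\{x^n=0\}$, $\Omega_s=\{x^n>0\}$, and the sound-speed metric is $g_{\alpha\beta}(x',x^n)\,dx^\alpha dx^\beta+(dx^n)^2$. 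In these coordinates the strict convexity (f), $\partial_{x^n}g^{ij}\xi_i\xi_j\geq\kappa_s|\xi|^2_g$, is precisely the pseudoconvexity of the level sets $\{x^n=\mathrm{const}\}$ with respect to $\square_c$, which together with the spacelike character of $\{t=0\}$ furnishes a Carleman weight $\phi$ whose level sets track the foliation. Because $u$ and $\bu$ carry null Cauchy data on $(0,T)\times(\Sigma_s\cap\Omega)$ and on $(0,T)\times(\Sigma^{\mathrm{int}}_s\cap\Gamma)$, after inserting a spatial cutoff supported in a fixed half-ball about $x_0$ all boundary terms carried by $\{x^n=0\}$ drop out, and finite propagation speed (Theorem~\ref{thm:energy_frac_wave}) keeps the argument local.

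Applying the weighted estimate to $u$, whose equation $\square_c u=-a\,\partial^\alpha_t u$ is homogeneous, the only nonstandard contribution to $\square_c u$ is the nonlocal term. Writing $\partial^\alpha_t u(t,\cdot)=\frac{1}{\Gamma(1-\alpha)}\int_0^t(t-s)^{-\alpha}\partial_s u(s,\cdot)\,ds$ and taking $\phi$ nonincreasing in $t$ on the slab (so that $e^{\tau\phi(t,x)}\leq e^{\tau\phi(s,x)}$ for $0\leq s\leq t$), Young's convolution inequality --- the kernel $t^{-\alpha}$ being integrable since $\alpha<1$ --- gives
\[
\int e^{2\tau\phi}\,a^2\,|\partial^\alpha_t u|^2\,dt\,dx\;\leq\;C\,\|a\|^2_{L^\infty}\,\epsilon_2^{\,2(1-\alpha)}\int e^{2\tau\phi}\,|\partial_t u|^2\,dt\,dx
\]
on $0<t<\epsilon_2$. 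As the right-hand side is a fixed multiple of $\epsilon_2^{2(1-\alpha)}$ times a first-order bulk quantity, it is absorbed into the left-hand side of the Carleman estimate by enlarging the parameter $\tau$; the same computation bounds $a\,\partial^\alpha_t\bu$ by $C\,\epsilon_2^{2(1-\alpha)}\int e^{2\tau\phi}|\partial_t\bu|^2$ in the estimate for $\bu$, since $\partial_t\bu=u$.

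The essential difficulty is the source $b(t,x)u_0(x)=\frac{a(x)}{\Gamma(2-\alpha)}(t^{1-\alpha}-1)u_0(x)$ in \eqref{frac_PAT_int}, which is \emph{not} zero a priori: the hypothesis only gives $\supp(u_0)\subset\overline{\Omega}_s$, so $u_0$ may persist up to $\Sigma_s$, and the lemma must therefore \emph{produce} the vanishing of $u_0$ --- indeed, once $\bu\equiv0$ on the slab, continuity and $u=\partial_t\bu$ force $u_0=\partial_t\bu|_{t=0}=0$ there. My plan is to treat $u_0$ as the trace $\partial_t\bu|_{t=0}$ and to exploit that $\bu$, hence $\nabla\bu$, vanishes at $t=0$, which collapses the $t=0$ boundary term of the $\bu$-estimate to a single quadratic expression in $u_0$. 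Since $|t^{1-\alpha}-1|\leq1$ for $0<t\leq1$, the weighted source obeys $\int e^{2\tau\phi}|b\,u_0|^2\,dt\,dx\leq C\,\|a\|^2_{L^\infty}\,\epsilon_2\int e^{2\tau\phi(0,\cdot)}|u_0|^2\,dx$, so it suffices to dominate $\int e^{2\tau\phi(0,\cdot)}|u_0|^2$ by the good part of the estimate and then profit from the factor $\epsilon_2$. This is where the companion estimate for $u$ enters: it controls $u=\partial_t\bu$ and, in particular, the nonvanishing trace data $u_0,\ u_1=-\frac{a}{\Gamma(2-\alpha)}u_0$ that appear in \emph{its own} $t=0$ boundary term, so that adding the two estimates with a suitable relative weight cross-absorbs the source of the $\bu$-system against the initial data of the $u$-system. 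One then concludes $u\equiv0$ on a lens attached to the Cauchy patch $\{x^n=0,\ 0<t<\tilde\tau\}$; integrating, $\bu\equiv0$ on the same lens, and continuity yields $u_0\equiv0$ there.

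Finally, by the uniformity of all constants in Remark~\ref{rmk:flat_coor}, the inward gain $\epsilon_1$ and the time-height $\epsilon_2$ of the lens can be chosen independently of $x_0\in\Sigma_s$ and of the base time, so the union of these lenses over $x_0$ and over $0<t<\tilde\tau$ covers the asserted set $\{(t,x):0<\dist(x,\Sigma_s)<\epsilon_1,\ 0<t<\epsilon_2\}$. The step I expect to be the main obstacle is the joint treatment, inside one combined estimate, of the nonlocal term and of the $t=0$ data: absorbing the fractional derivative and bounding the source across the slab both call for a weight $\phi$ nonincreasing in $t$, whereas extracting a favorable $t=0$ boundary term --- the mechanism that controls the trace $u_0$ --- typically calls for the opposite sign of $\partial_t\phi$ at $t=0$. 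Reconciling these requirements within admissible pseudoconvex weights, so that the convexity guaranteed by (f) survives the nonlocal perturbation, is precisely what forces the use of two separate Carleman estimates tuned to the $u$- and $\bu$-systems, and their careful combination; carrying this out with constants uniform in the foliation parameter $s$ is the technical heart of the argument.
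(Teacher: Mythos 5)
Your setup --- boundary normal coordinates, a pseudoconvex weight built from the strict convexity condition (f) and nonincreasing in time, Young's inequality to absorb the fractional term, and the identification $u_0=\partial_t\bu|_{t=0}$ --- matches the paper's, but the central step, namely how the source $b(t,x)u_0(x)$ in \eqref{frac_PAT_int} is actually absorbed, is where your argument has a genuine gap. You propose to run \emph{two} Carleman estimates, one for $u$ and one for $\bu$, and to ``cross-absorb the source of the $\bu$-system against the initial data of the $u$-system.'' This step would fail as described: a Carleman estimate applied to $u$, whose Cauchy data at $t=0$ are \emph{not} zero, produces $t=0$ boundary terms on the \emph{right-hand} side, of the schematic size $\int e^{2\tau\varphi(0)}\bigl(\tau^3|u_0|^2+\tau|\nabla u_0|^2+\tau|u_1|^2\bigr)dx$ with unfavorable sign, whereas the only favorable quantity available is the trace term $\tau\int e^{2\tau\varphi(0)}|u_0|^2dx$ coming from the $\bu$-estimate. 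The powers of $\tau$ do not match ($\tau^3$ against $\tau$), and nothing in either estimate controls $|\nabla u_0|^2$; no choice of relative weight between the two inequalities makes this cross-absorption close.

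The resolution used in the paper is simpler and requires no estimate for $u$ at all in this lemma. The local Carleman estimate for $\square_{c,\alpha}$ (adapted from \cite{AP2018,KT2004,BDU2007}, and stated for functions vanishing at $t=0$, which $\bu$ does) already carries the trace term $\tau\int_{Q_s(y)|_{t=0}}e^{2\tau\varphi_y(0)}|v_t(0)|^2dx$ on its \emph{left-hand} side; for $v=\bu$ this is exactly $\tau\int e^{2\tau\varphi_y(0)}|u_0|^2dx$. Since $|b|\le C$ on bounded time intervals and the weight $\varphi_y=(R-x^n)^2-\beta t^2-r^2$ satisfies $\varphi_y(t,x)\le\varphi_y(0,x)$, the source obeys $\int e^{2\tau\varphi_y}|b\,u_0|^2dxdt\le C\int e^{2\tau\varphi_y(0)}|u_0|^2dx$ with a $\tau$-independent constant, so it is absorbed into the trace term for $\tau$ large --- the Bukhgeim--Klibanov mechanism. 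The sign tension you flag at $t=0$ does not arise: the weight has $\partial_t\varphi_y(0)=0$, and since $\bu(0)=\nabla\bu(0)=0$ no unfavorable boundary contribution appears there, while the favorable trace term is recovered from the bulk. After gluing the local estimates over $\Sigma_s\cap\overline{\Omega}$ by compactness and a partition of unity, passing to $H^2$ functions by density, and letting $\tau\to\infty$, one concludes $\bu\equiv 0$ in $Q_s$ and $u_0\equiv 0$ in $Q_s|_{t=0}$. (Two kinds of Carleman estimates are indeed used in the proof of Theorem \ref{thm:uniqueness_att}, but the second one --- Tataru's estimate applied to $u$ --- belongs to Lemma \ref{lemma2_thm5}, not to the present lemma.)
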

\begin{remark}
    If $\epsilon_0<\min_{x\in\Sigma_s\cap\overline{\Omega}}\tilde{\tau}(x)$ the values of $\epsilon_2$ and $\epsilon_1$ depend only on the constants $\epsilon_0$ and $\kappa_0$, otherwise, they depend also on $\min_{x\in\Sigma_s\cap\overline{\Omega}}\tilde{\tau}(x)$.
\end{remark}


\begin{lemma}\label{lemma2_thm5}
    For any $s\in[s_0,s_T)$ assume that $\supp(u_0)\subset\overline{\Omega}_s$ and $u$ satisfies \eqref{frac_PAT} in $(0,T)\times\Omega_s$, 
    with null Cauchy data on $\{(t,x):x\in\Sigma_s\cap\Omega,\; 0<t<\tilde{\tau}(x)\}$ for some positive $C^2$-function $\tilde{\tau}:\Sigma_s\to(0,T]$, and on $(0,T)\times(\Sigma_s^{int}\cap \Gamma)$.
    If there are $\epsilon_1,\epsilon_2>0$  such that $u=0$ in
     \[
    \{(t,x)\in \RR_+\times\Omega_s\;:\: 0<\dist(x,\Sigma_{s})<\epsilon_1,\quad 0<t<\epsilon_2\},
    \]
    then, $u=0$ in the set
    \[
    \{(t,x)\in \RR_+\times\Omega_s\;:\: 0<\dist(x,\Sigma_{s})<\epsilon_1,\quad 0<t<\tilde{\tau}(\pi_s(x))-\dist(x,\Sigma_{s})\}.
    \]
    In particular, there is a constant $C>0$, depending on the function $\tilde{\tau}$, the sound speed $c$ and the vector field $\nu$, such that $u=0$ in the smaller set 
    \[
    \{(t,x)\in \RR_+\times\Omega_{s}\;:\: 0<\dist(x,\Sigma_{s})<\epsilon_1,\quad 0<t<\tilde{\tau}(\pi^\nu_s(x))-\dist_\nu(x,\Sigma_{s})-C (\dist_\nu(x,\Sigma_{s}))^2\}.
    \]
\end{lemma}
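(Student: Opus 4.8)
The plan is to upgrade the thin-slab vanishing furnished by Lemma~\ref{lemma1_thm5} into vanishing on the whole characteristic cone by a unique continuation argument anchored at the cap $\Sigma_s$ and propagated forward in time. The geometry of the target set is dictated by finite propagation speed: since $c^2|\nabla\,\dist(\cdot,\Sigma_s)|^2=1$, the surface $t+\dist(x,\Sigma_s)=\tilde\tau(\pi_s(x))$ is, to leading order, characteristic for $\square_c$, so the claimed region is precisely the domain of influence of the cap data $\{d=0,\ 0<t<\tilde\tau\}$ truncated to the slab $\{0<\dist(x,\Sigma_s)<\epsilon_1\}$; throughout I abbreviate $d=\dist(x,\Sigma_s)$.

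First I would record that the null Cauchy data on the cap let the zero-extension $\tilde u$ (equal to $u$ for $d>0$ and to $0$ on the exterior side $\Sigma_s^{\text{ext}}$) solve $\square_{c,\alpha}\tilde u=0$ weakly across $\Sigma_s$: here one uses that $a\partial_t^\alpha$ is \emph{local in space}, that $\supp(u_0)$ lies on the interior side so the exterior initial data vanish, and that matching $u$ and $\partial_\nu u$ cancels the single- and double-layer distributions carried by $\Sigma_s$. This turns the problem into a genuine two-sided continuation across the strictly convex leaf and kills the boundary terms on $\Sigma_s$ in the estimate below.

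The engine is then the second Carleman estimate for $\square_{c,\alpha}$ (the one attached to $u$ itself), with a weight whose level sets model the leaves of the foliation; strict convexity of $\Sigma_s$ in the sound-speed metric, condition (f) of Definition~\ref{def:adm_foliation}, is exactly the pseudoconvexity required for such an estimate for the principal part $\square_c$. The hard part will be the nonlocal term $a\partial_t^\alpha u=a\,I^{1-\alpha}\partial_t u$: although it is of differentiation order $\alpha<1$ and hence subordinate to $\square_c$, it is nonlocal in time, so I must control its conjugate $e^{\tau\varphi}\partial_t^\alpha e^{-\tau\varphi}$ against the Carleman weight. I would exploit that $\partial_t^\alpha$ smooths of order $1-\alpha$, that the memory integral runs only over the past, and the positivity of the fractional kernel (Lemma~\ref{lemma:positivity}), to absorb this contribution into the large-parameter terms of the estimate. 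This nonlocality is precisely why the reach of the estimate is finite ($\sim\epsilon_1$ in the normal direction) and the argument must remain local.

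With the local estimate available, I would sweep a one-parameter family of pseudoconvex surfaces, anchored along the cap where the Cauchy data vanish, forward in time into the slab. Because the sweeping surfaces are nearly characteristic, the vanishing advances at unit speed in the sound-speed metric, so the cap data available for $0<t<\tilde\tau(\pi_s(x))$ together with the normal reach $\epsilon_1$ fill exactly $\{0<d<\epsilon_1,\ t+d<\tilde\tau(\pi_s(x))\}$; finite propagation speed (Theorem~\ref{thm:energy_frac_wave}) organizes these local estimates so that the cap's temporal extent controls the reach in $t$ without requiring data from the far interior $\{d\geq\epsilon_1\}$ that contains $\supp(u_0)$, and a continuity argument in the sweeping parameter (seeded by the slab of Lemma~\ref{lemma1_thm5}) yields the first claim. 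Finally, for the refined inclusion I would pass from the metric pair $(\dist,\pi_s)$ to the normal-flow pair $(\dist_\nu,\pi^\nu_s)$: near the convex leaf the $\nu$-integral curve through $x$ and the minimizing geodesic to $\Sigma_s$ share their initial direction, so $\pi_s(x)$ and $\pi^\nu_s(x)$ differ by $O(d^2)$ and $\dist(x,\Sigma_s)=\dist_\nu(x,\Sigma_s)+O(\dist_\nu^2)$; substituting these $C^2$-expansions into $\tilde\tau(\pi_s(x))-\dist(x,\Sigma_s)$ produces the stated bound $\tilde\tau(\pi^\nu_s(x))-\dist_\nu(x,\Sigma_s)-C\,\dist_\nu(x,\Sigma_s)^2$, with $C$ depending on $\tilde\tau$, $c$ and $\nu$.
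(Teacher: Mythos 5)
Your geometric picture (exhausting the characteristic region $\{0<d<\epsilon_1,\ t+d<\tilde\tau(\pi_s(x))\}$, $d=\dist(x,\Sigma_s)$, by a swept family of surfaces seeded by the slab of Lemma~\ref{lemma1_thm5}, then passing to $(\dist_\nu,\pi^\nu_s)$ via the $O(d^2)$ comparisons of Lemma~\ref{lemma3_thm5}) matches the paper's, but the core analytic step is missing, and the substitute you propose would fail. The surfaces you sweep are, as you note yourself, nearly characteristic; for the wave operator, strong pseudoconvexity degenerates exactly there, so a classical H\"ormander-type Carleman estimate cannot cross them, and strict convexity of the leaf (condition (f) of Definition~\ref{def:adm_foliation}) is \emph{not} ``exactly the pseudoconvexity required.'' What the paper actually verifies is pseudoconvexity of the weight only on the partial cotangent set $\{(t,x,0,\xi)\}$ (time frequency equal to zero), and this suffices only because the coefficients are time-independent, i.e., via Tataru's unique continuation theory for partially analytic coefficients \cite{Ta1999,Ta1994}: the Carleman estimate carries the Gaussian regularizer $e^{-\frac{\epsilon}{2\tau}|D_t|^2}$, and the final conclusion that $u$ vanishes near the critical point comes from Tataru's modified Carleman method, not from a standard pseudoconvex sweep. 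This is the mechanism that lets the zero set advance at unit speed and produce the sharp cone $t+d<\tilde\tau(\pi_s(x))$; finite propagation speed cannot substitute for it, since the claim is a unique continuation statement, not a domain-of-dependence statement.

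Second, your treatment of the memory term is not viable as written. Positivity of the fractional kernel (Lemma~\ref{lemma:positivity}) is an $L^2$ coercivity fact used for energy dissipation; it gives nothing once the equation is conjugated by $e^{\tau\varphi}$ and by $E=e^{-\frac{\epsilon}{2\tau}|D_t|^2}$, both of which destroy that sign structure. The paper's actual device is to build the local weight $\varphi$ so that it is \emph{affine in $t$ with constant negative slope}, whence $\varphi(t,x)-\varphi(s,x)=\varphi_t\,(t-s)$ and the conjugated memory kernel becomes $e^{\tau\varphi_t t}k(t)$, an exponentially damped integrable kernel uniformly in $\tau$; this produces the commutation identity \eqref{eq:commutation} (convolutions in $t$ commute with $E$), after which Young's convolution inequality bounds $\|Ee^{\tau\varphi}\partial_t^\alpha v\|$ by $\|Ee^{\tau\varphi}v\|_{(1,\tau)}$, which is absorbed for large $\tau$. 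Your proposal neither imposes this time-linearity on the weight nor addresses the interaction of the nonlocal term with the regularizer $E$, and that interaction is the only genuinely delicate point in extending Tataru's estimate from $\square_c$ to $\square_{c,\alpha}$.
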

The proof of the second part of the previous lemma utilizes the following approximations, which roughly say that both distances, $\dist(x,\Sigma_s)$ and $\dist_\nu(x,\Sigma_s)$, are similar for $x$ in a sufficiently small vicinity of $\Sigma_s$.
\begin{lemma}\label{lemma3_thm5}
    There are constants $C_1,C_2, C_3>0$, depending on the metric $c^{-2}dx^2$, such that, for any $s\in[s_0,s_T]$ and $x\in\Omega_s\cap\{\dist(x,\Sigma_s)<\epsilon_0\}$:
    \begin{itemize}
        \item[i.] $\dist_\nu(x,\Sigma_s)\leq C_1\dist(x,\Sigma_s)$;
        \item[ii.] $|\dist_\nu(x,\Sigma_s)-\dist(x,\Sigma_s)|\leq C_2(\dist_\nu(x,\Sigma_s))^2$;
        \item[iii.] $\dist(\pi_s(x),\pi^\nu_s(x))\leq C_3(\dist_\nu(x,\Sigma_s))^2$.
    \end{itemize}
\end{lemma}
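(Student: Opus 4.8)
The plan is to carry out all three estimates inside \emph{boundary normal coordinates} adapted to the fixed leaf $\Sigma_s$, whose existence with uniform collar width $\epsilon_0$ is guaranteed by Remark~\ref{rmk:flat_coor}. In such coordinates $(x',x^n)$ the sound speed metric reads $g_{\alpha\beta}(x',x^n)\,dx^\alpha dx^\beta + (dx^n)^2$, the leaf $\Sigma_s$ is $\{x^n=0\}$, and $x^n$ is precisely the signed distance to $\Sigma_s$, so that $\dist(x,\Sigma_s)=x^n$ and $\pi_s(x)=(x',0)$ for a point $x=(x',x^n)$ with $0<x^n<\epsilon_0$. The single geometric fact driving everything is that the unit normal field $\nu$ restricts on $\Sigma_s$ to the boundary-normal field $\partial_{x^n}$; since $\nu$ is $C^2$ by condition (c) and both are \emph{unit} vector fields, a Taylor expansion in $x^n$ yields $\nu=\partial_{x^n}+O(x^n)$ and, using $|\nu|_g=|\partial_{x^n}|_g=1$, the sharper normal estimate $\langle\nu,\partial_{x^n}\rangle_g = 1-O((x^n)^2)$, with the implied constants uniform in $s$ thanks to the compactness of $\overline{\mathcal{D}_{\Sigma,T}}$ and the uniform control of the charts and of the derivatives of $\nu$ and $g$ in Remark~\ref{rmk:flat_coor}.

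I would then parametrize by arclength the $\nu$-integral curve $\gamma(\ell)=(\gamma'(\ell),\gamma^n(\ell))$ issuing from $x$ toward $\Sigma_s$, so that $\dot\gamma=-\nu(\gamma)$ and $\dist_\nu(x,\Sigma_s)$ equals the length $L$ at which $\gamma^n$ first vanishes. Since $\nabla x^n=\partial_{x^n}$, one computes $\tfrac{d}{d\ell}\gamma^n = \langle-\nu,\partial_{x^n}\rangle_g = -(1-O((\gamma^n)^2))$, which is $\le -\tfrac12$ once $\epsilon_0$ is taken small. Hence $\gamma^n$ decreases from $x^n$ to $0$ at speed comparable to $1$, giving $L\le 2x^n$, which is statement (i). Integrating the same identity more carefully, $L=\int_0^{x^n}\frac{d\sigma}{1-O(\sigma^2)} = x^n+O((x^n)^3)$, so $|\dist_\nu(x,\Sigma_s)-\dist(x,\Sigma_s)|=O((x^n)^3)$, which is bounded by $C_2(\dist_\nu(x,\Sigma_s))^2$ after using (i) to trade $x^n$ for $\dist_\nu$; this is (ii).

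For (iii) the endpoint of $\gamma$ is $\pi^\nu_s(x)=(\gamma'(L),0)$ while $\pi_s(x)=(x',0)$, so it suffices to bound the tangential drift $\gamma'(L)-x'$. The tangential velocity is the tangential part of $-\nu$, which by the expansion above is $O(\gamma^n)=O(x^n)$ along the curve; since $L\le 2x^n$, the crude bound $|\gamma'(L)-x'|\le\int_0^L O(\gamma^n)\,d\ell = O((x^n)^2)$ follows. As both projected points lie on $\{x^n=0\}$, where the induced metric is uniformly equivalent to the coordinate one, their geodesic separation is controlled by this tangential difference, whence $\dist(\pi_s(x),\pi^\nu_s(x))=O((x^n)^2)\le C_3(\dist_\nu(x,\Sigma_s))^2$, once more converting $x^n\sim\dist_\nu$ through (i).

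The only genuine difficulty is ensuring that the constants $C_1,C_2,C_3$ are \emph{independent of $s$}: every $O(\cdot)$ above hides norms of $\nu-\partial_{x^n}$ and of the metric coefficients in the boundary-normal charts, and one must verify these remain uniformly bounded as $s$ ranges over the compact interval $[s_0,s_T]$. This is exactly what the uniform choice of collar width $\epsilon_0$, the curvature bounds $\underline{\kappa}\le\kappa_s\le\overline{\kappa}$, and the chart regularity in Remark~\ref{rmk:flat_coor} provide, together with the assumed $C^2$ regularity of the normal field; the remaining manipulations are the elementary one-dimensional integral estimates sketched above.
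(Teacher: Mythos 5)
Your proof is correct, but it follows a genuinely different route from the paper's. The paper splits the lemma across two charts: part \emph{i.} is proved in the \emph{flat} chart of condition (d) of Definition \ref{def:adm_foliation}, where the integral curves of $\nu$ are straight coordinate segments (the paper uses that $\nu=(0,\nu^n(x))$ in that chart), and the comparison with $\dist(x,\Sigma_s)$ comes from a Taylor expansion of the minimizing geodesic issued from $\pi_s(x)$; parts \emph{ii.} and \emph{iii.} are proved in boundary normal coordinates by Taylor-expanding the integral curve $\gamma_\nu$ about its foot point $\pi^\nu_s(x)$ on $\Sigma_s$, so that the single identity $x=\pi^\nu_s(x)+\dist_\nu(x,\Sigma_s)\hat{e}_n+O(\dist_\nu(x,\Sigma_s)^2)$ delivers the normal and tangential estimates simultaneously. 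You instead work entirely in boundary normal coordinates and expand the \emph{vector field} rather than the flow: from $\nu|_{\Sigma_s}=\partial_{x^n}$, the $C^2$ regularity of $\nu$, and unitarity you obtain $\langle\nu,\partial_{x^n}\rangle_g=1-O((x^n)^2)$, and then integrate the resulting scalar ODEs for $\gamma^n$ and for the tangential drift $\gamma'$. This buys you several things: part \emph{i.} is obtained without invoking the flat chart at all (and your monotonicity bound $\tfrac{d}{d\ell}\gamma^n\le-\tfrac12$ incidentally re-proves that the integral curve reaches $\Sigma_s$ inside the collar); the unitarity trick upgrades the naive $O(x^n)$ discrepancy of the fields to a quadratic one, yielding an even sharper version of \emph{ii.} ($|\dist_\nu-\dist|=O(\dist^3)$ rather than $O(\dist_\nu^2)$); and uniformity in $s$ comes from the same compactness considerations as in the paper (Remark \ref{rmk:flat_coor}). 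What the paper's flow-expansion buys in exchange is brevity: one second-order Taylor expansion handles \emph{ii.} and \emph{iii.} at once.

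One small repair. In \emph{ii.} and \emph{iii.} you convert bounds of the form $O((x^n)^2)$ and $O((x^n)^3)$ into $O(\dist_\nu^2)$ ``using (i) to trade $x^n$ for $\dist_\nu$''; but part \emph{i.} gives $\dist_\nu\le C_1 x^n$, which is the wrong direction for an upper bound. What you actually need is $x^n=\dist(x,\Sigma_s)\le\dist_\nu(x,\Sigma_s)$, which holds trivially: the $\nu$-integral curve segment is one particular curve joining $x$ to $\Sigma_s$, so its length dominates the distance. With this substitution (together with the boundedness $\dist_\nu\le C_1\epsilon_0$, needed to absorb the extra factor in the cubic term), both conversions go through and the constants retain the claimed uniform dependence.
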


\begin{myproof}[Proof of Theorem \ref{thm:uniqueness_att}]
Pick an arbitrary $(t_0,x_0)\in \mathcal{D}_{\Sigma,T}$ and let $s\in(s_0,s_{T})$ be such that $x_0\in\Sigma_s$. 
    Let's consider an $\epsilon\leq \epsilon_0$ whose value will be chosen later. By the uniform continuity of the leaves of the foliation, for any choice of $\epsilon$ there is $\delta>0$ such that if $|s-s'|<\delta$, $\dist_H(\Sigma_s,\Sigma_{s'})<\epsilon$. 
Let $N\in \mathbb{N}$ be large enough such that $\delta = \frac{2}{N+1}(s-s_0)$ satisfies the previous (for a fixed $\epsilon$) and define $s_j = s_0+j\frac{\delta}{2}$ for $j=0,\dots,N+1$, so in particular $s_{N+1}=s$.
Based on Lemma \ref{lemma3_thm5} (part {\em i.}), by choosing $N$ even larger if necessary we make sure that for all $j=1,\dots,N+1$,
\begin{equation}\label{eq:dist_nu}
\max_{x\in\Sigma_{s_j}\cap\overline{\Omega}}\dist_\nu(x,\Sigma_{s_{j-1}})<\epsilon.
\end{equation}
Then, due to \eqref{eq:dist_nu} and the definition of $s_T$ (see the paragraph below Remark \ref{rmk:flat_coor}), we have that for all $x\in\Sigma^{\text{ext}}_{s_{N+1}}\cap\Omega$ the function
\begin{equation}\label{eq:g_tilde}
\tilde{g}(x):=(\dist_\nu(x,\Sigma_{s_{N}}))^2+\sum^{N}_{k=1}(\dist_\nu(\pi^\nu_{s_k}(x),\Sigma_{s_{k-1}}))^2,
\end{equation}
satisfies
\begin{equation}\label{ineq:tilde_g}
\tilde{g}(x)<\epsilon \dist_\nu(x,\Sigma_{s_0})< \epsilon T.
\end{equation}
Then, for any but fixed large constant $M>0$, by choosing $\epsilon$ smaller if necessary (and consequently, increasing the value of $N$),
and because $\tau(x)$ has a positive minimum value in the closure of $\Sigma^{\text{ext}}_{s_{N+1}}\cap\Omega$, we might assume without loss of generality that
\begin{equation}\label{ineq:t_0}
t_0<\tau(x_0) - M\epsilon T.
\end{equation}
We will later pick a specific constant $M$ and choose the other parameters accordingly.

Recall that $\supp(u_0)\cap\partial\Omega=\emptyset$. By finite propagation speed and shrinking $\epsilon$ if necessary, we have that $u=0$ in
\[
\{(t,x)\in \RR_+\times\Omega_{s_0}\;:\: 0<\dist(x,\Sigma_{s_0})<\epsilon,\quad 0<t<\epsilon\}.
\]
Notice that $\Omega_{s_0}=\Omega$ and $\Gamma_{s_0}=\Gamma$, thus by hypothesis, $u$ has null Cauchy data on 
\[
(0,T)\times\Gamma\supset \{(t,x):x\in\Gamma_{s_0},\;0<t<\tau(x)\}.
\]
According to Lemma \ref{lemma2_thm5} (where we take $\tilde{\tau}(x)=\tau(x)$) we have that $u=0$ in
\[
\{(t,x)\in \RR_+\times\Omega_{s_0}\;:\: 0<\dist(x,\Sigma_{s_0})<\epsilon,\quad 0<t<\tau(\pi_{s_0}(x))- \dist(x,\Sigma_{s_0})\},
\]
and in addition, due to Lemma \ref{lemma3_thm5}, $\pi_{s_0}(x)$ and $\pi_{s_0}^\nu(x)$, and $\dist(x,\Sigma_{s_0})$ and $\dist_\nu(x,\Sigma_{s_0})$ are close to each other, and in fact there is a uniform constant $C_0>0$ (depending on $\tau$, the sound speed $c$ and the vector field $\nu$) for which
\[
\tau(x)-(\tau(\pi_{s_0}(x))- \dist(x,\Sigma_{s_0}))<C_0(\dist_\nu(x,\Sigma_{s_0}))^2,
\]
since $\tau(x) = \tau(\pi_{s_0}^\nu(x)) - \dist_\nu(x,\Sigma_{s_0})$. This implies that $u=0$ in the smaller set
\[
\{(t,x)\in \RR_+\times\Omega_{s_0}\;:\: 0<\dist(x,\Sigma_{s_0})<\epsilon,\quad 0<t<\tau(x)-C_0(\dist_\nu(x,\Sigma_{s_0}))^2\}.
\]
It is clear that $\bu=\int^t_0u(s)ds$ (solution to \eqref{frac_PAT_int}) must also vanish there. Moreover, since the previous set contains
\[
\{(t,x):\RR_+\times \Sigma_{s_1}\cap\Omega:\; 0<t<\tilde{\tau}(x)\}\cup \left((0,T)\times(\Sigma^{\text{int}}_{s_1}\cap\Gamma)\right)
\]
with $s_1=s_0+\frac{\delta}{2}$ and $\tilde{\tau}(x) = \tau(x) - C_0(\dist_\nu(x,\Sigma_{s_0}))^2$ (this is because $\dist_H(\Sigma_{s_1},\Sigma_{s_0})<\epsilon$),
we deduce that $\bu$ has null Cauchy data in this surface.

In what follows, $N$ and therefore $\epsilon$ are chosen so that \eqref{ineq:t_0} is satisfied with the (so far arbitrary) constant $M$ chosen to be $C_0$. Starting now from $\Gamma_{s_1}$, we can apply Lemmas \ref{lemma1_thm5} and \ref{lemma2_thm5} to obtain that $u=0$ in 
\[
\{(t,x)\in \RR_+\times\Omega_{s_1}\;:\: 0<\dist(x,\Sigma_{s_1})<\epsilon,\quad 0<t<\tilde{\tau}(x)-C_0(\dist_\nu(x,\Sigma_{s_1}))^2\},
\]
and in particular, defining $\tilde{\tau}(x)$ in $\Gamma_{s_2}$ as 
\[
\tilde{\tau}(x) = \tau(x)-C_0(\dist_\nu(\pi^\nu_{s_1}(x),\Sigma_{s_0}))^2-C_0(\dist_\nu(x,\Sigma_{s_1}))^2 ,
\]
we obtain that $\bu$ has now null Cauchy data on 
\[
\{(t,x):\RR_+\times \Sigma_{s_2}\cap\Omega:\; 0<t<\tilde{\tau}(x)\}\cap \left((0,T)\times(\Sigma^{\text{int}}_{s_2}\cap\Gamma)\right)
\]
This allows us to continue iterating this process. We do this $N-1$ times and deduce that $u=0$ in 
\begin{equation}\label{null_set}
\bigcup_{0\leq j\leq N}\{(t,x)\in \RR_+\times\Omega_{s_j}\;:\: 0<\dist(x,\Sigma_{s_j})<\epsilon,\quad 0<t<\tilde{\tau}_j(x)\}
\end{equation}
where $\tilde{\tau}_0(x)=\tau(x)$ and 
\[
\tilde{\tau}_j(x) = \tau(x)-C_0\left((\dist(x,\Sigma_{s_{j}}))^2+\sum^{j}_{k=1}(\dist(\pi^\nu_{s_k}(x),\Sigma_{s_{k-1}}))^2\right)
\]
for $j\geq 1$.

We conclude the proof by recalling that: $x_0\in\Sigma_{s_{N+1}}=\Sigma_{s}$; the definitions of $\{s_j\}_{j=1}^{N+1}$ and $\delta$; and \eqref{eq:g_tilde}-\eqref{ineq:tilde_g}; hence, by taking $\epsilon>0$ small enough (and therefore, increasing $N$) the inequality \eqref{ineq:t_0} holds and this subsequently implies
\[
\begin{aligned}
t_0<\tau(x_0)-C_0\epsilon T <\tau(x_0)-C_0\tilde{g}(x_0)=\tilde{\tau}_{N}(x_0).
\end{aligned}
\]
Since in addition $0<\dist(x_0,\Sigma_{S_N})<\epsilon$, we conclude that $(t_0,x_0)$ belongs to the open set \eqref{null_set} and consequently $u=0$ in a neighborhood of $(t_0,x_0)$.
\end{myproof}

\begin{myproof}[Proof of lemma \ref{lemma1_thm5}]
The proof is based on the construction of a local Carleman estimates for the operator $\square_{c,\alpha}$, which hold thanks to the strict convexity of $\Sigma_s\cap \overline{\Omega}$. The compactness of the aforementioned hypersurface allows us to glue together several of those local inequalities and then conclude that $\overline{u}$ has to vanish near $\Sigma_s\cap \overline{\Omega}$, for small times. 

We begin by taking an arbitrary $y\in \Sigma_{s}\cap\overline{\Omega}$ and considering boundary normal coordinates $(x',x^n)$ centered at this point, where $x^n$ corresponds to the signed distance to $\Sigma_s$ so that $y$, $\Sigma_{s}$ and $\Sigma_{s}^{\text{int}}$ are characterized respectively by $(0,0)$, $x^n=0$ and $x^n>0$. 
We consider also the weight function
\begin{equation}\label{Carleman_weight}
\varphi_y(t,x) = (R-x^n)^2-\beta t^2 - r^2,
\end{equation}
with $R,r$ and $\beta\in(0,1)$ chosen as follows. By associating to $\varphi_y$ the set
\[
Q_s(y) = \{(t,x)\in \RR_+\times\Omega_s: \varphi_y(t,x)>0,\; |x'|<\eta\}
\]
we chose $R,r$ and $\delta>0$ so that the projection of $Q_s(y)$ to the spatial variable, namely,
\[
Q_s(y)|_{t=0}:=\{x\in \Omega_s: \varphi_y(0,x)>0,\; |x'|<\eta\}
\]
lies within the boundary normal  local chart. In particular, we need $R-r\leq \epsilon_0$ with the value of $r$ depending on the upper bound of the principal curvatures of the hypersurfaces $\Sigma_s\cap\Omega$ (see Remark \ref{rmk:flat_coor}). We also take $R>0$ and $\beta\in(0,1)$ such that 
\[
\partial Q_s(y)\cap \left((0,\infty)\times\Gamma_s\right)\subset \{(t,x): 0<t<\tilde{\tau}(x),\; x\in\Gamma_s\},
\]
this is, if 
\[
t_{max}:=\max\{t>0:\exists x\in \Gamma_s\text{ such that } (t,x)\in\partial Q_s(y)\}=\sqrt{\beta^{-1}(R^2-r^2)}
\]
we require $ t_{max}<\min_{x\in\Sigma_s\cap\overline{\Omega}}\tilde{\tau}(x)$. 
This holds for any $\beta\in(0,1)$ and
\[
R^2<r^2+\beta\min_{x\in\Sigma_s\cap\overline{\Omega}}\tilde{\tau}(x)^2,
\]
and consequently we arrive at the following condition: for any $\beta\in(0,1)$ and for $r$ as before, we take $R>0$ satisfying
\[
R<\min\left\{r+\epsilon_0,\sqrt{r^2 + \beta\min_{x\in\Sigma_s\cap\overline{\Omega}}\tilde{\tau}(x)^2}\right\}.
\]
A more restrictive one would be
\[
R<\min\left\{\sqrt{r^2+\epsilon_0^2},\sqrt{r^2 + \beta\min_{x\in\Sigma_s\cap\overline{\Omega}}\tilde{\tau}(x)^2}\right\},
\]
thus, in the case of $\epsilon_0<\min_{x\in\Sigma_s\cap\overline{\Omega}}\tilde{\tau}(x)$ there is $\beta\in(0,1)$ for which the previous inequality still holds and we take $R<r+\epsilon_0$, otherwise, we might choose any $\beta\in(0,1)$ and take $R<\sqrt{r^2 + \beta\min_{x\in\Sigma_s\cap\overline{\Omega}}\tilde{\tau}(x)^2}$.

The strict convexity of $\Sigma_s\cap\overline{\Omega}$ gives rise to the next local Carleman inequality (for more details on its construction, see \cite{AP2018} and also \cite{KT2004,BDU2007}). There is a constant $C>0$ and $\tau_0>0$ such that for any $v\in C^\infty(\RR_+\times\RR^n)$ with support contained in $\RR_+\times(B^{n-1}_\eta(0)\times\RR)$ and $v(0,x)=0$, with $\eta$ as in the definition of the set $Q_s(y)$, and any $\tau\geq \tau_0$, 
\[
\begin{aligned}
&\tau\int_{Q_s(y)|_{t=0}}e^{2\tau\varphi_y(0)}|v_t(0)|^2dx+
\tau\int_{Q_s(y)}e^{2\tau\varphi_y}(\tau^2|v|^2 + |v_t|^2+|v_x|^2_g)dxdt\\
&\leq C\int_{Q_s(y)}e^{2\tau\varphi_y}|\square_c v|^2dxdt +C\int_{\Phi_s(y)}\left(\langle X_1\nabla v,\nabla v\rangle+\langle X_2,\nabla v\rangle v + X_3|v|^2\right)dS,
\end{aligned}
\]
where $dS$ denotes the surface measure on $\Phi_s(y)=\partial Q_s(y)\cap\{\varphi_y=0\}$, and the matrix-function
$X_1(t,x)$, the vector-function $X_2(t,x)$, and the scalar-function $X_3(t,x)$ are  continuous and depend on $\varphi_y$ but not on $\tau$. Notice there are only boundary terms at $\Phi_s(y)$ due to our assumption of null Cauchy data, and the support of $v$.

The extension of the local estimate to $\square_{c,\alpha}$ follows from the next inequality proven in 
\cite{Ya2022},
\[
\|e^{\tau\varphi_y} \partial^\alpha_tv\|^2_{L^2(Q_s(y))} \leq C_{T_0,\alpha}\|e^{\tau\varphi_y}\partial_tv\|^2_{L^2(Q_s(y))},\quad C_{T_0,\alpha} = \frac{\max\{1,T^{1-\alpha}\}}{\min_{\theta\in(1,2)}\Gamma(\theta)},
\]
which is a consequence of Young's inequality, the integrability of $t^{-\alpha}$ near zero, and the fact that the weight function $\varphi$ decreases for positive time. Absorbing terms with the left hand side (taking $\tau>0$ larger if necessary) we get
\[
\begin{aligned}
&\tau\int_{Q_s(y)|_{t=0}}e^{2\tau\varphi_y(0)}|v_t(0)|^2dx+\tau\int_{Q_s(y)}e^{2\tau\varphi_y}(\tau^2|v|^2 + |v_t|^2+|v_x|^2_g)dxdt\\
&\leq C\int_{Q_s(y)}e^{2\tau\varphi_y}|\square_{c,\alpha} v|^2dxdt +C\int_{\Phi_s(y)}\left(\langle X_1\nabla v,\nabla v\rangle+\langle X_2,\nabla v\rangle v + X_3|v|^2\right)dS.
\end{aligned}
\]
By the compactness of $\Sigma_s\cap\overline{\Omega}$, we can cover it with a finite collection of sets $Q_s(y)$, each of them contained in a boundary normal local chart. We then define a weight function $\varphi$ on a neighborhood of $\Sigma_s\cap\overline{\Omega}$ by taking it to be of the form \eqref{Carleman_weight} on each of those patches, for a uniform choice of parameters $R,r$ and $\beta$. A partition of unity subordinate to the previous finite covering gives us that for any $v\in C^\infty(Q_s)$ with null Cauchy data on 
\[
\{(t,x):\RR_+\times \Sigma_{s}\cap\Omega:\; 0<t<\tilde{\tau}(x)\}\cap \left((0,T)\times(\Sigma^{\text{int}}_{s}\cap\Gamma)\right),
\]
and for sufficiently large $\tau$,
\begin{equation}\label{ineq:Carleman}
\begin{aligned}
&\tau\int_{Q_s|_{t=0}}e^{2\tau\varphi(0)}|v_t(0)|^2dx+\tau\int_{Q_s}e^{2\tau\varphi}(\tau^2|v|^2 + |v_t|^2+|v_x|^2_g)dxdt\\
&\leq C\int_{Q_s}e^{2\tau\varphi}|\square_{c,\alpha} v|^2dxdt + C\int_{\Phi_s}\left(\langle X_1\nabla v,\nabla v\rangle+\langle X_2,\nabla v\rangle v + X_3|v|^2\right)dS,
\end{aligned}
\end{equation}
where
\[
Q_s = \{(t,x)\in\RR_+\times\Omega_s:\varphi(t,x)>0\},\quad Q_s|_{t=0} = \{x\in\Omega_s:\varphi(0,x)>0\}
\]
and
\[
\Phi_s = \left(\{\varphi=0\}\cap (\RR_+\times\Omega_s)\right).
\]
By density (where we use Lemma 2.1 in \cite{StUh2013} to approximate near the corners of $Q_s$) 
the previous inequality holds for all $v\in H^2(Q_s)$ with vanishing Cauchy data on $\{(t,x):\RR_+\times \Sigma_{s}\cap\Omega:\; 0<t<\tilde{\tau}(x)\}\cup \left((0,T)\times(\Sigma^{\text{int}}_{s}\cap\Gamma)\right)$.\\

We now apply the Carleman estimates \eqref{ineq:Carleman} to $v=\overline{u}$ and use that $\varphi(t)\leq \varphi(0)$ to get
\[
\begin{aligned}
&\tau\int_{Q_s|_{\{t=0\}}}e^{2\tau\varphi(0)}|u_0|^2 dxdt +\tau^3\int_{Q_s}e^{2\tau\varphi}|\overline{u}|^2 dxdt\\
&\leq C\int_{\Phi_s}\left(\langle X_1\nabla \overline{u},\nabla \overline{u}\rangle+\langle X_2,\nabla \overline{u}\rangle v + X_3|\overline{u}|^2\right)dS,
\end{aligned}
\]
for all $\tau\gg 1$, and where the right-hand side is independent of $\tau$. Therefore, the previous inequality leads us to conclude that $\bu$ must vanish in $Q_s$, and in particular $u_0=0$ in $Q_s|_{\{t=0\}}$. \\

We end the proof by noticing the following concerning the choice of parameters $R,r$ and $\beta$. If $\epsilon_0<\min_{x\in\Sigma_s\cap\overline{\Omega}}\tilde{\tau}(x)$ we take $R=\sqrt{r^2+\epsilon_0^2/2}$, therefore, we might choose $\epsilon_1=\frac{1}{2}(\sqrt{r^2+\epsilon_0^2/2}-r)<R-r$. In the other case, this is, if $\epsilon_0\geq \min_{x\in\Sigma_s\cap\overline{\Omega}}\tilde{\tau}(x)$ we choose
\[
R=\sqrt{r^2 + \frac{\beta}{2}\min_{x\in\Sigma_s\cap\overline{\Omega}}\tilde{\tau}(x)^2},
\]
and subsequently take
\[
\epsilon_1=\frac{1}{2}\left(\sqrt{r^2+\frac{\beta}{2}\min_{x\in\Sigma_s\cap\overline{\Omega}}\tilde{\tau}(x)^2}-r\right)<R-r.
\]
With the previous choice of parameters and the result obtained from the Carleman estimate, we conclude that it is possible to find a sufficiently small $\epsilon_2$ such that 
    \[
    \{(t,x)\in \RR_+\times\Omega_s\;:\: 0<\dist(x,\Sigma_{s})<\epsilon_1,\quad 0<t<\epsilon_2\}\subset Q_s.
    \]
where $\bar{u}=0$.
\end{myproof}

\begin{myproof}[Proof of lemma \ref{lemma2_thm5}]
As in the previous lemma, we let $x^n$ be the signed distance to $\Sigma_s$. 
We define the function $\psi$ in  boundary normal local coordinates as
\[
\psi(t,x) = (\epsilon_1-x^n)(\tilde{\tau}(x',0)-t-x^n).
\]
Its levels sets $\{\psi(t,x)=\gamma\}_{\gamma>0}$ exhaust the region 
\[
\mathcal{Q}=\{(t,x)\in\RR_+\times\Omega_s:0<x^n<\epsilon_1,\; 0<t<\tilde{\tau}(\pi_s(x))-x^n\},
\]
where we recall that $\pi_s(x)$ stands for the projection of $x\in \Omega_s\cap\{0<x^n<\epsilon_1\}$ onto $\Sigma_s$. It is clear that for some $\gamma_0>0$, 
\[
\{\psi(t,x)>\gamma_0\}\subset \{(t,x):0<x^n<\epsilon_1,\; 0<t<\epsilon_2\}
\]
thus, by Lemma \ref{lemma1_thm5} we have $u=0$ in this super-level set. Let 
\[
\gamma_1 = \inf\{\gamma>0:u=0\text{ in }\{\psi>\gamma\}\}.
\]
If $\gamma_1=0$ we obtain the first assertion of the lemma, otherwise, we take an arbitrary point $(t_0,x_0)\in \{\psi=\gamma_1\}$ and our task is to show that $u$ has to vanish in a neighborhood of this point, consequently, in a neighborhood of the whole level set $\{\psi=\gamma_1\}\cap \overline{\mathcal{Q}}$ contradicting the definition of $\gamma_1$ and concluding that $\gamma_1=0$.

Recall we are working within 
a boundary normal local chart (with respect to $\Sigma_s$) containing $(t_0,x_0)\in \{\psi=\gamma_1\}$. In those coordinates, we define the weight function
\[
\varphi(t,x) = \psi(t,x)- T_1[\psi](t_0,x',x^n_0) - \frac{\delta}{2}|x-x_0|^2,
\]
for some $\delta>0$ to be chosen later, and where $T_1[\psi](t,x',x^n)$ stands for the first order Taylor expansion of $\psi$ around $(t_0,x_0)$, thus
\[
T_1[\psi](t_0,x',x^n_0)=\psi(t_0,x_0) + \sum^{n-1}_{j=1}\partial_{x^j}\psi(t_0,x_0)(x^j-x_0^j).
\]
Our main objective is to show that $u$ vanishes in a set of the form
$\{\varphi>\eta\}\cap\{\psi\leq \psi(t_0,x_0)\}$ which, for $\eta<0$ sufficiently small in magnitude, contains $(t_0,x_0)$ and lies within a small neighborhood of $(t_0,x_0)$. We can see this last statement by noticing that there are constants $c_1,c_2>0$ such that for any  $(\theta,\xi)\in(\psi'(t_0,x_0))^{\perp}$, $|\theta|\leq c_1|\xi|$ and
\[
\langle (\psi-\varphi)''(\theta,\xi),(\theta,\xi)\rangle = \delta|\xi|^2>c_2|(\theta,\xi)|^2.
\]
To show unique continuation, we intend to use a Carleman estimate requiring the strong pseudoconvexity condition in the subset $\{(t,x,0,\xi):t>0,x\in\RR^n,\xi\in\RR^n\}\subset T^*\RR^{n+1}$. For the precise definition of pseudoconvexity in a subset of the cotangent bundle we refer the reader to \cite{Ta1999}. 
In fact, we first notice that at $(t_0,x_0)$, 
\[
\varphi_{x'}(t_0,x_0)=0,\quad \varphi_{x^n}(t_0,x_0)=-a,\quad\varphi_t(t_0,x_0)=-b\quad\text{with}\quad a>b>0,
\]
for $1\leq i,j\leq n-1$,
\[
\varphi_{x^ix^j}(t_0,x_0)=-\delta\delta_{ij},\quad \varphi_{x^ix^n}(t_0,x_0) = -\partial_{x^i}\tilde{\tau}(x'_0),\quad \varphi_{tx^i}(t_0,x_0) = 0,
\]
and
\[
\varphi_{x^nx^n}(t_0,x_0) = 2-\delta,\quad \varphi_{tx^n}(t_0,x_0) = 1.
\]
Then, and since the principal symbol of $\square_c$, $p(t,x,\theta,\xi)  = |\xi|_g^2 - \theta^2$, is elliptic in $\{(t,x,0,\xi):t>0,x\in\RR^n,\xi\in\RR^n\}$, we only need to verify the inequality $\frac{1}{i\tau}\{\overline{p}_\varphi,p_\varphi\}(t_0,x_0,0,\xi)>0$ for all $\xi\neq 0$ such that $p_\varphi(t_0,x_0,0,\xi)=0$, where $\{\cdot,\cdot\}$ stands for the Poisson bracket and
\[
p_\varphi(t,x,\theta,\xi):=p(t,x,\theta+i\tau\varphi_t,\xi+i\tau\varphi_x).
\]
Notice that $p_\varphi(t_0,x_0,0,\xi)=0$ implies $\xi_n = 0$ and $|\xi|^2_g=\tau^2(a^2-b^2)$. Computing the bracket and evaluating at $(t_0,x_0,0,\xi)$ leads to
\[
\begin{aligned}
&\frac{1}{i\tau}\{\overline{p}_\varphi,p_\varphi\}(t_0,x_0,0,\xi) = \frac{2}{\tau}\{\mathfrak{Re}p_\varphi,\mathfrak{Im}p_\varphi\}\\
&=8\tau^2\left(a^2(2-\delta) - b^2 - ab + \frac{a}{4\tau^2}\sum^{n-1}_{i,j}(\partial_{x^n}g^{ij})\xi_i\xi_j+\frac{\delta}{\tau^2}\sum^{n-1}_{k=1}\Big(\sum^{n-1}_{i=1}g^{ik}\xi_i\Big)^2 \right)\\
&\geq 16\tau^2(a^2-ab + a(a^2-b^2)\frac{\underline{\kappa}}{8} - \delta M) 
\end{aligned}
\]
for some 
\[
M>\frac{a^2}{2}+\frac{1}{2\tau^2}\sum^{n-1}_{k=1}\Big(\sum^{n-1}_{i=1}g^{ik}\xi_i\Big)^2,
\]
and with $\underline{\kappa}$ the constant from the uniform strict convexity condition for the foliation. The desired inequality is obtained by taking $\delta>0$ small enough.

Because $\varphi$ is strongly pseudoconvex in the set $\{(t,x,0,\xi):t>0,x\in\RR^n,\xi\in\RR^n\}$ with respect to $\square_c$ at $(t_0,x_0)$, there exist constants $\zeta,d,C,\tau_0>0$ such that for each $\epsilon>0$ and $\tau\geq \tau_0$ the following Carleman estimate holds (see \cite{Ta1999})
\[
\tau^{-1}\|e^{-\frac{\epsilon}{2\tau}|D_t|^2}e^{\tau\varphi}v\|^2_{(2,\tau)}\leq C\left(\|e^{-\frac{\epsilon}{2\tau}|D_t|^2}e^{\tau\varphi}\square_cv\|^2 +e^{-d\epsilon\tau}\|e^{\tau\varphi}\square_cv\|^2 + e^{-d\epsilon\tau}\|e^{\tau\varphi}v\|^2_{(1,\tau)}\right)
\]
for every $v\in C^\infty_c(B(t_0,x_0;\zeta))$ with $B(t_0,x_0;\zeta)\subset\RR^{n+1}$ the ball of radius $\zeta$ and center $(t_0,x_0)$.

The extension of the previous inequality to $\square_{c,\alpha}$ is obtained in a similar fashion as in \cite{BDU2007}, where the authors treat the case of a time-convolution perturbation of the (Euclidean) wave operator with a regular kernel. The main ingredients needed to obtain the extension are the time-convolution character of the lower order term (i.e. $a\partial^\alpha_t v$) and the linearity in time of the weight function $\varphi$. These conditions imply a suitable commutation identity that we explain next (see \eqref{eq:commutation}). 

Denoting $k(t)=\frac{H(t)}{\Gamma(1-\alpha)t^\alpha}$, for a compactly supported function $v(t)$ with support in $\RR_+$ we might write $\partial^\alpha_tv = k*v_t$ and we have
\[
e^{\tau\varphi}(k*v_t) = (e^{\tau\varphi_tt}k)*(e^{\tau\varphi}v)_t - \tau \varphi_t(e^{\tau\varphi_tt}k)*(e^{\tau\varphi}v)
\]
since $\varphi(t,x)-\varphi(s,x) = \varphi_t(t,x)(t-s)$ with $\varphi_t$ a negative constant. Writing $E=e^{-\frac{\epsilon}{2\tau}|D_t|^2}$, which we can also recast as the convolution operator
\[
Ef(t) = \left(\frac{\tau}{2\pi\epsilon}\right)^{1/2}\int e^{-\frac{\tau|t-s|^2}{2\epsilon}}f(s)ds,
\]
we see that
\[
Ee^{\tau\varphi}(k*v_t) = E(e^{\tau\varphi_tt}k)*(e^{\tau\varphi}v)_t - \tau \varphi_tE(e^{\tau\varphi_tt}k)*(e^{\tau\varphi}v).
\]
Then, $E$ and $(e^{\tau\varphi_tt}k)*$ commute thus
\begin{equation}\label{eq:commutation}
\begin{aligned}
Ee^{\tau\varphi}k*v_t&= (e^{\tau\varphi_tt}k)*E(e^{\tau\varphi}v)_t - \tau \varphi_t(e^{\tau\varphi_tt}k)*E(e^{\tau\varphi}v)\\
&= (e^{\tau\varphi_tt}k)*(Ee^{\tau\varphi}v)_t - \tau \varphi_t(e^{\tau\varphi_tt}k)*E(e^{\tau\varphi}v),
\end{aligned}
\end{equation}
where we have also used the commutation $E\partial_t = \partial_tE$ for compactly supported $f$ (which follows from integration by parts).

Applying Young's convolution inequality and using the integrability of the kernel $e^{\tau\varphi_tt}k$ (where recall $\varphi_t<0$ in the support of $v$) we deduce the estimate
\[
\|e^{-\frac{\epsilon}{2\tau}|D_t|^2}e^{\tau\varphi}\partial^\alpha_tv\|^2\leq C\|e^{-\frac{\epsilon}{2\tau}|D_t|^2}e^{\tau\varphi}v\|_{(1,\tau)}^2.
\]
In consequence, and for different constants $C$ on each line, the following estimates hold for all $v\in C^\infty_c(B(t_0,x_0;\zeta))$,
\[
\begin{aligned}
&\tau^{-1}\|e^{-\frac{\epsilon}{2\tau}|D_t|^2}e^{\tau\varphi}v\|^2_{(2,\tau)}\\
&\leq C\left(\|e^{-\frac{\epsilon}{2\tau}|D_t|^2}e^{\tau\varphi}\square_{c,\alpha}v\|^2 + \|e^{-\frac{\epsilon}{2\tau}|D_t|^2}e^{\tau\varphi}\partial^\alpha_tv\|^2+e^{-d\epsilon\tau}\|e^{\tau\varphi}\square_cv\|^2 + e^{-d\epsilon\tau}\|e^{\tau\varphi}v\|^2_{(1,\tau)}\right)\\
&\leq C\left(\|e^{-\frac{\epsilon}{2\tau}|D_t|^2}e^{\tau\varphi}\square_{c,\alpha}v\|^2 + \|e^{-\frac{\epsilon}{2\tau}|D_t|^2}e^{\tau\varphi}v\|_{(1,\tau)}^2+e^{-d\epsilon\tau}\|e^{\tau\varphi}\square_cv\|^2 + e^{-d\epsilon\tau}\|e^{\tau\varphi}v\|^2_{(1,\tau)}\right)\\
&\leq C\left(\|e^{-\frac{\epsilon}{2\tau}|D_t|^2}e^{\tau\varphi}\square_{c,\alpha}v\|^2 +e^{-d\epsilon\tau}\|e^{\tau\varphi}\square_cv\|^2 + e^{-d\epsilon\tau}\|e^{\tau\varphi}v\|^2_{(1,\tau)}\right).
\end{aligned}
\]
The last inequality follows after absorbing terms  with the LHS by choosing $\tau\geq \tau_0>0$ larger if necessary.

By taking $v=\chi u$ with $\chi$ a smooth cut-off with support contained in a sufficiently small neighborhood of $(t_0,x_0)$, and noticing that $\supp(v)\subset\{\varphi(t,x)>\ell_1\}$ for some $\ell_1<0$, we get
\[
\|e^{-\frac{\epsilon}{2\tau}|D_t|^2}e^{\tau\varphi}\chi u\|^2\leq Ce^{\ell_2\epsilon\tau}
\]
for some other constants $\ell_2<0$ and $C>0$ depending on $\|\chi u\|_{H^1(Q_0)}$.

The conclusion that $u=0$ in a neighborhood of $(t_0,x_0)$ is a consequence of the modified Carleman method introduced by Tataru in \cite{Ta1994} (see also \cite{LernerBook}).
\end{myproof}

\begin{myproof}[Proof of lemma \ref{lemma3_thm5}]
{\em i.} Let $x_0\in\Omega_s\cap\{\dist(x,\Sigma_s)<\epsilon_0\}$ be arbitrary. In a flat chart with coordinates $x=(x',x^n)$, there is $c_0 \in\RR$ such that $\Sigma_s$ is characterized by $x^n=c_0$ and $\Sigma_s^{\text{int}}$ by $x^n>c_0$. Moreover, the vector field $\nu$ takes the form $\nu(x)=(0,\nu^n(x))$. 

Let $y_0 = (x'_0,c_0)$ and consider the curve $\gamma_\nu(r) = y_0+r(x_0-y_0)$, which up to re-parametrization is an integral curve for the vector field $\nu$ connecting $y_0$ and $x_0$. In particular, since the length of a curve is invariant under change of parametrization, we have that 
\begin{equation}\label{ineq:dist_nu}
\begin{aligned}
\dist_\nu(x_0,\Sigma_s)&=L_g(\gamma_\nu(r)|_{r\in[0,1]})\\
&= \displaystyle\int^1_0|\dot{\gamma}_\nu(r)|_gdr\\
&\leq C\int^1_0|\dot{\gamma}_\nu(r)|dr\leq C|x_0-y_0|
\end{aligned}
\end{equation}
for some constant $C$ depending on the local chart, and where $|\cdot|_g$ and $|\cdot|$ stand respectively for the norm associated to the sound speed and the euclidean metric.

Let $z_0=\pi_s(x_0)\in\Sigma_s$ the projection of $x_0$ onto $\Sigma_s$ and denote by $\gamma:[0,\theta_0]\to\RR^n$ the unit speed geodesic connecting $x_0=\gamma(\theta_0)$ and $z_0=\gamma(0)$, therefore, $\theta_0 = \dist(x_0,z_0)=\dist(x_0,\Sigma_s)$. Taylor expanding $\gamma(\theta)$ around $z_0$ and evaluating at $\theta_0$ gives
\[
x_0 = z_0 + \dist(x_0,z_0)\dot{\gamma}(0) + O(\dist(x_0,z_0)^2).
\]
Denoting $\hat{e}_n=(0,\dots,0,1)$, we have
\[
y_0+|x_0-y_0|\hat{e}_n = z_0 + \dist(x_0,z_0)\dot{\gamma}(0) + O(\dist(x_0,z_0)^2),
\]
but since $y_0$ and $z_0$ both belong to $\Sigma_s$, restricting our attention to the $n$-th coordinate we see that
\[
|x_0-y_0| = \dist(x_0,z_0)(\dot{\gamma}(0)\cdot\hat{e}_n) + O(\dist(x_0,z_0)^2),
\]
therefore
\[
|x_0-y_0|\leq C\dist(x_0,z_0)
\]
for some positive constant $C$ depending on $y_0$. Bringing together this inequality and \eqref{ineq:dist_nu} gives
\[
\dist_\nu(x_0,\Sigma_s)\leq C\dist(x_0,\Sigma_s)\leq C\epsilon_0.
\]
Finally, the compactness of $\overline{D_{\Sigma,T}}$ allows us to choose a uniform constant $C$ such that the previous holds for any $s\in[s_0,s_T]$ and $x_0\in\Omega_s\cap\{\dist(x,\Sigma_s)<\epsilon_0\}$.\\

\noindent{\em ii. and iii.} Take an arbitrary $x\in\Omega_s\cap\{\dist(x,\Sigma_s)<\epsilon_0\}$. We will now consider boundary normal coordinates with respect to $\Sigma_s$.

Let $y_0\in\Sigma_s$ be the projections of $x_0$ along the integral curves of $\nu$ to $\Sigma_s$, and denote by $\gamma_\nu:[0,\theta_0]\to\RR^n$ the integral curve of the unitary vector field $\nu$ such that $\gamma_\nu(0)=y_0\in \Sigma_s$ and $\gamma_\nu(\theta_0)=x_0$. Then, $\theta_0=\dist_\nu(x_0,\Sigma_s)$.

On the other hand, we denote by $z_0$ the closest point to $x_0$ in $\Sigma_s$, thus, they are connected by a geodesic which in the boundary normal coordinates takes the form $\gamma(\theta) = z_0+\theta \hat{e}_n$.

Applying a Taylor expansion to $\gamma_\nu(\theta)$ we get
\[
x_0 = y_0 + \theta_0\hat{e}_n + O(\theta_0^2),
\]
from where we deduce
\[
z_0+\dist(x_0,\Sigma_s)\hat{e}_n=y_0 + \dist_\nu(x_0,\Sigma_s)\hat{e}_n + O(\dist_\nu(x_0,\Sigma_s)^2).
\]
In particular, the $n$-th coordinate gives
\[
\dist(x_0,\Sigma_s)=\dist_\nu(x_0,\Sigma_s) + O(\dist_\nu(x_0,\Sigma_s)^2),
\]
from where a local version of $ii$ follows, while the equality in the other coordinates implies
\[
|\pi_s(x_0)-\pi^\nu_s(x_0)| = O(\dist_\nu(x_0,\Sigma_s)^2),
\]
and subsequently, a local version of $iii.$ 

We conclude the proof by invoking once again the compactness of $\overline{D_{\Sigma,T}}$, which allows us to choose uniform constants such that $ii$ and $iii$ hold for any $s\in[s_0,s_T]$ and $x_0\in\Omega_s\cap\{\dist(x,\Sigma_s)<\epsilon_0\}$.\\
\end{myproof}

\section{Stability}

For a fixed compact subdomain $\mathcal{K}\subset\Omega$ with smooth boundary, we consider initial conditions in
\[
H^1_0(\mathcal{K}):=\{f\in H^1(\RR^n):\supp(f)\subseteq\mathcal{K}\}.
\]
The technical result upon which the main part of the stability theorem follows consists in showing that for $v$ solution to the unattenuated system
 \begin{equation}\label{unatt_PAT}
\left\{\begin{array}{l}
\partial_t^2v - c^2\Delta v = 0,\\
(v,\partial_tv)|_{t=0} = (v_0,0),
\end{array}\right.
\end{equation}
the fractional derivative $ \partial^\alpha_tv$ depends continuously on $v_0$, in terms of its $H^\alpha(\RR^n)$-norm. This is the content of Theorem \ref{thm:reg_frac} below.
The stability of the inverse problem, stated in Theorem \ref{thm:stability}, is then a consequence of the well-posedness of the fractional forward problem and a standard compactness-uniqueness argument.\\

Below, we denote by $C^{1-\alpha}([0,T];L^2(\RR^n))$  the Banach space of all $(1-\alpha)-$H\"older continuous functions with respect to time, with values in $L^2(\RR^n)$, and equipped with the norm
    \[
    \|v\|_{C^{1-\alpha}([0,T];L^2(\RR^n))}:=\sup_{t\in[0,T]}\|v(\cdot,t)\|_{L^2(\RR^n)} + \sup_{t,\tau\in[0,T],t\neq\tau}\frac{\|v(\cdot,t)-v(\cdot,\tau)\|_{L^2(\RR^n)}}{|t-\tau|^{1-\alpha}}.
    \]

\begin{theorem}\label{thm:reg_frac}
    Let $v$ be the solution to \eqref{unatt_PAT} for an initial condition $v_0\in H^1_0(\mathcal{K})$. Then, 
    \[
    \partial^\alpha_tv\in C^{1-\alpha}([0,T];L^2(\RR^n))\cap C((0,T];H^{1-\alpha}(\RR^n))
    \cap L^1((0,T);H^{1-\alpha}(\RR^n)),
    \]
    and for each $t\in[0,T]$, $\partial^\alpha_tv(t)$ has compact support in $\RR^n$. Moreover, for each $\delta\in(0,T)$ there is a constant $C>0$ such that
    \[
    \|\partial^\alpha_tv\|_{L^1((0,T);L^2(\RR^n))}+\|\partial^\alpha_tv\|_{C((\delta,T);L^2(\RR^n))}\leq C\|v_0\|_{H^\alpha(\RR^n)},\quad \forall v_0\in H^1_0(\mathcal{K}),
    \]
    and
    \[
    \|\partial^\alpha_tv\|_{L^1((0,T);H^{1-\alpha}(\RR^n))}+\|\partial^\alpha_tv\|_{C((\delta,T);H^{1-\alpha}(\RR^n))}\leq C\|v_0\|_{H^1(\RR^n)},\quad \forall v_0\in H^1_0(\mathcal{K}).
    \]
\end{theorem}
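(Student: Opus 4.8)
The plan is to combine an elementary Riemann--Liouville reformulation, which yields the time-regularity, the compact support and the crude bounds, with a microlocal (FIO) analysis that produces the sharp estimates exhibiting a loss of exactly $\alpha$ spatial derivatives. First I would record that, since the initial velocity in \eqref{unatt_PAT} vanishes, $t=0$ is automatically a right Lebesgue point of $v(\cdot,x)$ with $\partial_t v(0,\cdot)=0$; hence by Definition \ref{def:Caputo} the Caputo derivative coincides with the Riemann--Liouville fractional integral of order $1-\alpha$ of the velocity,
\[
\partial^\alpha_t v(t) \;=\; \frac{1}{\Gamma(1-\alpha)}\int_0^t (t-s)^{-\alpha}\,\partial_s v(s)\,ds .
\]
For $v_0\in H^1_0(\mathcal{K})$ standard energy estimates (Theorem \ref{thm:regularity} with $a\equiv0$) give $\partial_t v\in C([0,T];L^2(\RR^n))$ with $\partial_t v(0)=0$, and the classical mapping property of the fractional integral of order $1-\alpha$ (which sends $C^0$ into $C^{1-\alpha}$, with norm controlled by $\|\partial_t v\|_{C([0,T];L^2)}$) yields $\partial^\alpha_t v\in C^{1-\alpha}([0,T];L^2(\RR^n))$. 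Finite propagation speed for $\boxc$ confines $\supp v(t,\cdot)$ to a fixed compact neighborhood of $\mathcal{K}$ for $t\le T$, and since $\partial^\alpha_t v(t)$ is a time-convolution of $\{\partial_s v(s)\}_{s\le t}$ it inherits compact support.

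The sharp estimates are the heart of the matter, and I would obtain them from a composition lemma. I would first express the solution through a geometric-optics parametrix $v(t)=\sum_{\pm}F_\pm(t)v_0$ modulo smoothing, where $F_\pm(t)$ are FIOs of order $0$ whose phases $\phi_\pm(t,x,\xi)$ solve the eikonal equation $\partial_t\phi_\pm=\pm c(x)|\nabla_x\phi_\pm|_g$ (homogeneous of degree one in $\xi$) and whose canonical relations are the graphs of the bicharacteristic flow. The decisive step is to apply $\partial^\alpha_t$ to this representation using the oscillatory-integral form of the Caputo derivative from Definition \ref{def:Caputo} (formula \eqref{eq:Fourier_Caputo_a}): since the time-frequency generated by the phase satisfies $\partial_t\phi_\pm\sim\pm c(x)|\xi|$, the Caputo symbol $(i\omega)^\alpha$ contributes, on the high-frequency region, a factor homogeneous of degree $\alpha$ in $\xi$. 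A stationary/nonstationary-phase argument then shows
\[
\partial^\alpha_t v(t)=\sum_{\pm}G_\pm(t)v_0+R(t)v_0,\qquad t\in(0,T],
\]
where $G_\pm(t)$ are FIOs of order $\alpha$ with the same canonical graphs as $F_\pm(t)$, uniformly bounded in $t\in[0,T]$, while the remainder $R(t)$ is an operator of order $0$ whose norm in $\mathcal{B}(H^s)$ is $O(t^{-\alpha})$. The constant-coefficient model $\partial^\alpha_t\cos(t|\xi|)=|\xi|^\alpha\cos(t|\xi|+\tfrac{\pi\alpha}{2})+O(t^{-\alpha})$, uniform in $\xi$, makes both the principal order-$\alpha$ term and the integrable-in-time remainder transparent and guides the variable-coefficient computation.

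With this structure in hand the estimates follow from the Sobolev mapping properties of FIOs: an order-$\alpha$ operator associated with a canonical graph maps $H^s(\RR^n)\to H^{s-\alpha}(\RR^n)$. Taking $s=\alpha$ gives $\|G_\pm(t)v_0\|_{L^2}\le C\|v_0\|_{H^\alpha}$ and $\|R(t)v_0\|_{L^2}\le Ct^{-\alpha}\|v_0\|_{L^2}\le Ct^{-\alpha}\|v_0\|_{H^\alpha}$; since $t^{-\alpha}\in L^1(0,T)$ for $\alpha\in(0,1)$, integrating in $t$ yields the $L^1((0,T);L^2)$ bound, while on any interval $(\delta,T)$ the factor $t^{-\alpha}\le\delta^{-\alpha}$ gives the uniform bound. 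The identical argument with $s=1$ produces the $H^{1-\alpha}$ estimates from $\|v_0\|_{H^1}$, and the memberships in $C((0,T];H^{1-\alpha})$ and $L^1((0,T);H^{1-\alpha})$ follow from the smooth $t$-dependence of $G_\pm(t)$ and $R(t)$ on $(0,T]$ together with the integrability of $t^{-\alpha}$ near the origin.

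The main obstacle is the composition lemma itself. The symbol $(i\omega)^\alpha$ is not smooth at $\omega=0$, so the analysis must separate the high-time-frequency region---where, thanks to the dispersion relation $\omega\sim\pm c|\xi|$, $(i\omega)^\alpha$ is a genuine order-$\alpha$ symbol and the FIO calculus applies cleanly---from the low-frequency and small-$t$ region, where one must control the tails discarded in passing from $\int_0^t$ to $\int_0^\infty$ in the model computation; these tails are precisely what generate the $O(t^{-\alpha})$ remainder. Carrying this out rigorously requires care with the partition of unity $\{h_j\}$ and the distributional limit \eqref{eq:Fourier_Caputo_d} entering the definition of $\partial^\alpha_t$, and with the uniformity of all relevant symbol seminorms as $t\to0$; this is the technical crux of the argument.
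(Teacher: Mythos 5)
Your proposal is correct and follows essentially the same route as the paper: the H\"older-in-time regularity and compact support are obtained exactly as in the paper's proof (via $\partial^\alpha_t v=\cI_{1-\alpha}\partial_t v$, the $C^0\to C^{1-\alpha}$ mapping property of Lemma \ref{lemma:cont_frac_int}, and finite propagation speed), while your ``composition lemma'' is precisely the paper's Lemma \ref{lemma:comp_FIO_Caputo}, proved there by applying the oscillatory-integral form of the Caputo derivative to the geometric-optics parametrix, using stationary phase to produce FIOs of order $\alpha$ with the wave phases plus remainders with an integrable singularity at $t=0$, and invoking FIO/$\Psi$DO mapping properties. The only difference is bookkeeping of the remainder (the paper keeps order-$\alpha$ symbols with an integrable logarithmic singularity at $t=0^+$ together with an order-$\alpha$ stationary $\Psi$DO, rather than your order-zero $O(t^{-\alpha})$ operator), which does not affect the stated estimates.
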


\begin{proof}
The finite propagation speed property of the wave equation implies the compactness of support of $\partial_t^\alpha v(t)$. Moreover, since $v_0\in H^1_0(\mathcal{K})$, standard regularity properties lead to $\partial_tv\in C([0,T];L^2(\RR^n))$, from where we can deduce that 
$\partial^\alpha_tv\in C^{1-\alpha}([0,T];L^2(\RR^n))$. In fact, since we can write $\partial^\alpha_tv = \cI_{1-\alpha}(\partial_tv)$ with the fractional integral operator $\cI_{1-\alpha}$ defined below, we apply the next property of fractional integrals to $\partial_tv$.
\begin{lemma}\label{lemma:cont_frac_int}
For $\alpha\in(0,1)$ the operator
\[
\cI_{1-\alpha}u(x,t) := \frac{1}{\Gamma(1-\alpha)}\int^t_0\frac{u(x,s)}{(t-s)^{\alpha}}ds
\]
is a continuous map from $C([0,T];L^2(\RR^n))$ into $C^{1-\alpha}([0,T];L^2(\RR^n))$. In addition, for any $r\in \RR$, it is a continuous map from $L^1([0,T];H^r(\RR^n))$ into itself.
\end{lemma}

The rest of the proof, namely, the regularity in $C((0,T];H^{1-\alpha}(\RR^n))$ and $L^1((0,T);H^{1-\alpha}(\RR^n))$, and the continuous dependence estimate, follows directly from the next lemma in the specific cases $m=\alpha,1$.
\begin{lemma}\label{lemma:comp_FIO_Caputo}
    For each $\delta\in(0,T)$ and real exponent $m\geq 0$ there exists a constant $C(\delta,m)>0$ such that
    \[
    \|\partial^\alpha_tv\|_{L^1((0,T);H^{m-\alpha}(\RR^n))}+\|\partial^\alpha_tv\|_{C((\delta,T];H^{m-\alpha}(\RR^n))}\leq C\|v_0\|_{H^m(\RR^n)},\quad \forall v_0\in H^1_0(\mathcal{K}),
    \]
    where $v$ is the unique solution to \eqref{unatt_PAT}.
\end{lemma}

\end{proof}

\begin{myproof}[Proof of Lemma \ref{lemma:cont_frac_int}]
1. According to Minkowski's integral inequality we obtain
\[
\begin{aligned}
\|\cI_{1-\alpha}u(\cdot,t)\|_{L^2}&\leq \frac{1}{\Gamma(1-\alpha)}\int^t_0\frac{\|u(\cdot,s)\|_{L^2}}{(t-s)^{\alpha}}ds\\
&\leq \left(\int^t_0\frac{(t-s)^{-\alpha}}{\Gamma(1-\alpha)}dt\right)\|u\|_{C^0(L^2)}\\
&\leq \frac{t^{1-\alpha}}{\Gamma(2-\alpha)}\|u\|_{C^0(L^2)}
\end{aligned},
\]
therefore
\[
\sup_{t\in[0,T]}\|\cI_{1-\alpha}u(t)\|_{L^2}\leq \frac{T^{1-\alpha}}{\Gamma(2-\alpha)}\|u\|_{C^0(L^2)}.
\]
 Similarly, noticing first that for $t>\tau$ and for almost every $x\in \RR^n$,
\[
\begin{aligned}
&|\cI_{1-\alpha}u(x,t)-\cI_{1-\alpha}u(x,\tau)|\\
&\leq \int^t_\tau\frac{(t-s)^{-\alpha}}{\Gamma(1-\alpha)}|u(x,s)|ds + \int^\tau_0\frac{(t-s)^{-\alpha} - (\tau-s)^{-\alpha}}{\Gamma(1-\alpha)}|u(x,s)|ds,
\end{aligned}
\]
then, by Minkowski's integral inequality and the triangle inequality, we get 
\[
\begin{aligned}
&\|\cI_{1-\alpha}u(\cdot,t)-\cI_{1-\alpha}u(\cdot,\tau)\|_{L^2}\\
&\leq \|u\|_{C^0(L^2)}\left(\int^t_\tau\frac{(t-s)^{-\alpha}}{\Gamma(1-\alpha)}ds + \int^\tau_0\frac{|(t-s)^{-\alpha} - (\tau-s)^{-\alpha}|}{\Gamma(1-\alpha)}ds\right),
\end{aligned}
\]
and subsequently,
\[
\begin{aligned}
&\|\cI_{1-\alpha}u(\cdot,t)-\cI_{1-\alpha}u(\cdot,\tau)\|_{L^2}^2\\
&\leq \frac{2\|u\|^2_{C^0(L^2)}}{\Gamma(2-\alpha)^2}\left((t-\tau)^{2(1-\alpha)} + \left((t-\tau)^{(1-\alpha)}+\tau^{1-\alpha} - t^{1-\alpha} \right)^2\right)\\
&\leq \frac{4(t-\tau)^{2(1-\alpha)}}{\Gamma(2-\alpha)^2}\|u\|^2_{C^0(L^2)}.
\end{aligned}
\]
Taking supremum with respect to $t,\tau\in[0,T]$, with $t\neq \tau$, we deduce there is a constant $C_{T,\alpha>0}$ such that
\[
\|\cI_{1-\alpha}u\|_{C^{1-\alpha}(L^2)}\leq C_{T,\alpha}\|u\|_{C^0(L^2)}.
\]

2. Applying Fubini to pass the Fourier Transform inside the fractional integral, and also Minkowski's integral inequality and Youngs's convolution inequality, we obtain
\[
\begin{aligned}
\|\cI_{1-\alpha}u\|_{L^1(H^r)}& = \int^T_0\left(\int_{\RR^n}(1+|\xi|^2)^{r}\left(\frac{1}{\Gamma(1-\alpha)}\int^t_0\frac{\hat{u}(\xi,s)}{(t-s)^{1-\alpha}}ds\right)^2d\xi\right)^{1/2}dt\\
&\leq \frac{1}{\Gamma(1-\alpha)}\int^T_0\int^t_0\frac{\|u(\cdot,s)\|_{H^r}}{(t-s)^{\alpha}}dsdt\\
&\leq \left(\sup_{t\in[0,T]}\int^t_0\frac{(t-s)^{-\alpha}}{\Gamma(1-\alpha)}ds\right)\|u\|_{L^1(H^r)}\\
&\leq \frac{T^{1-\alpha}}{\Gamma(2-\alpha)}\|u\|_{L^1(H^r)}.
\end{aligned}
\]
\end{myproof}

\begin{myproof}[Proof of Lemma \ref{lemma:comp_FIO_Caputo}] 
Given the initial data $v_0$ in \eqref{unatt_PAT}, it is well known that $v$ can be represented
by a Fourier Integral Operator (FIO) acting on $v_0$, this is, up to an smooth function we can write $v$ as a finite sum of Fourier integrals of the form
\[
\frac{1}{(2\pi)^n}\int e^{i\phi(t,x,\xi)}A(t,x,\xi)\hat{v}_0(\xi)d\xi,
\]
with $\phi$ a phase function which is smooth and positively homogeneous of order one in the Fourier variable $\xi$, and $A$ a classical symbol of order zero, this is, it has an asymptotic expansion $\sum_{j\geq 0}A_j(t,x,\xi)$ with each $A_j(t,x,\xi)\in S^{-j}(\RR\times\RR^n\times \RR^n)$ positively homogeneous of order $-j$ in $\xi$. We refer the reader to \cite{TrevesII,GS1994} for more details on the geometric optics construction that leads to this representation. 

Since we are interested in local regularity, we will work near some arbitrary $(t_1,x_1)\in(0,\infty)\times\RR^n$ and microlocalize ---i.e., we localize in phase-space--- around a null-bicharacteristic for the symbol $p(t,x,\tau,\xi) = c^2(x)|\xi|^2-\tau^2$ and passing through $(t_1,x_1,\tau_1,\xi_1)\in T^*\RR^{n+1}$ satisfying $p(t_1,x_1,\tau_1,\xi_1)=0$. We do this last part by means of Egorov's theorem and
replacing $v_0$ with a microlocalization of it, this is, we apply a zero-order pseudodifferential operator with symbol $\chi(x,\xi)$, homogeneous of order zero in $\xi$ and  support contained in a conic neighborhood of some $(x_0,\xi_0)\in T^*\RR^{n}$. This phase-space point is such that $(0,x_0,\tau_0,\xi_0)\in p^{-1}(\{0\})$ generates a null-bicharacteristic passing through $(t_1,x_1,\tau_1,\xi_1)$. In the following, we assume that such microlocalization has already been performed and we simply assume that the wavefront set of $v_0$ is contained in a conic neighborhood of $(x_0,\xi_0)\in T^*\RR^{n}$. Then, 
\begin{equation}\label{eq:fourier_int}
v(t,x) = \sum_{\pm}\frac{1}{(2\pi)^n}\int e^{i\phi_{\pm}(t,x,\xi)}A^{\pm}(t,x,\xi)\hat{v}_0(\xi)d\xi
\end{equation}
is a microlocal representation of the actual solution in the vicinity of the null-bicharacteristic emanating from $(0,x_0;\tau_0,\xi_0)\in T^*\RR^{n+1}$, near the point $(t_1,x_1)$. The equality is therefore understood modulo a smooth function. 

It is known that for $v_0\in H^m_{comp}(\RR^n)$ ---the space of compactly supported $H^m$-functions--- $v\in C([0,T];H^m_{comp}(\RR^n))\cap C^1([0,T];H^{m-1}_{comp}(\RR^n))$, thus a priori we have 
\[
\partial^\alpha_tv \in C([0,T];H^{m-1}_{comp}(\RR^n)).
\]

We denote by $v_+$ and $v_-$ the respective terms in \eqref{eq:fourier_int}. Their phase functions $\phi_\pm$ are associated with the positive and negative sound speed in the sense they respectively solve the eikonal equations
\[
\partial_t\phi_\pm = \pm c(x)|\partial_x\phi_\pm|.
\]
In general, these equations can not be solved globally in the whole interval $[0,T]$, however,  the Cauchy problem is locally solvable for any initial time (see for instance \cite{TrevesII}), hence by compactness of $[0,T]$ we only require to do so a finite number of times, this is, by solving Cauchy problems with initial conditions at times $0=t_0<t_1<t_2<...<t_l<T$ for some $l\geq 1$. At $t=0$ we set
\[
\phi_\pm=x\cdot\xi,\quad A_{0}^\pm = 1/2,\quad A_{j}^\pm=0 \quad\text{for}\quad j\geq 1.
\]
and find $\phi$ and $A$ for a small interval of time containing $t_1$. The restriction of these functions to $t_1$ gives us initial conditions for the next Cauchy problem for $\phi$ and $A$, which we solve in a small interval of time containing $t_2$. We repeat this process until we construct the parametrix in a neighborhood of a null-bicharacteristic up to $t=T$.

The intervals where the eikonal equation is solvable form a finite covering of $[0,T]$ and we complete it to a covering of all $\RR$. Recalling Remark \ref{rmk:partition_unity}, without loss of generality, we take a partition of unity $\{h_j\}_{j\geq 0}$ subordinate to it and further assume that $h_0$ is the only one with support containing a neighborhood of $t=0$. We will use this partition of unity to apply the convolution in Definition \ref{def:conv}, and in light of Remark \ref{rmk:Caputo_der} and our choice of partition of unity, we will treat the cases $j=0$ and $j\geq 1$ separately.

In addition, we will perform the computations solely on the component $v_+$ in \eqref{eq:fourier_int} since for the other one is analogous. That said, we will omit the sub- and super-script $``+"$ from now on. 

We are interested only in the leading order terms, or in other words, the most singular component of $\partial^\alpha_t v$. We will then work with an oscillatory integral representation and keep track of the terms having the highest asymptotic behavior in $\xi$ (the Fourier variable). We will see that the leading order is $|\xi|^\alpha$ (implying a loss of $\alpha$-derivatives in space), thus for an initial source $v_0\in H^m(\RR^n)$, $m\geq 0$, the leading order term will have $H^{m-\alpha}$-regularity in space.

{\em 1.} Let's assume $j\geq 1$. Recalling the Fourier integral representation of the fractional derivative in \eqref{eq:Fourier_Caputo_a}-\eqref{eq:Fourier_Caputo_b} and Remarks \ref{rmk:partition_unity} and \ref{rmk:Caputo_der}, if $v_0(x)\in H^m(\RR^n)$ for some $m\geq 0$, we have
\[
t\mapsto \frac{1}{2\pi}\int\int^\infty_0 e^{i(t-s)\omega}(i\omega)^\alpha h_j(s)v_0(x)dsd\omega \;\in\; C^\infty([0,\infty);H^m(\RR^n)),
\]
thus, modulo a $C^\infty([0,\infty);H^m(\RR^n))$-function,
\[
(\partial^\alpha_tv)_{j} = \frac{1}{2\pi}\int\int e^{i(t-s)\omega}(i\omega)^\alpha h_j(s)v(x,s)dsd\omega.
\]
Notice that, due to $h_j$ having support away from the origin, we are allowed to treat the integral with respect to $s$ as if it were on the whole real line.

We also do not lose generality by assuming $v_0\in C^\infty_c(\RR^n)$ since otherwise the next computations hold for $\hat{v}_0(\xi)$ replaced by $\tilde{\chi}(\epsilon\xi)\hat{v}_0(\xi)$, with $\tilde{\chi}\in\mathcal{S}(\RR^n)$ such that $\tilde{\chi}(0)=1$
and taking the limit as $\epsilon\to 0$ in the sense of distributions.

By using the local representation of $v$ as a Fourier integral we obtain
\[
\begin{aligned}
(\partial^\alpha_tv)_{j}
&= \frac{1}{(2\pi)^{n+1}}\int\int \int e^{i(t-s)\omega}(i\omega)^\alpha h_j(s)e^{i\phi(s,x,\xi)}A(s,x,\xi)\hat{v}_0(\xi)d\xi dsd\omega
\end{aligned}
\]
modulo a $C^\infty([0,\infty); H^m(\RR^n))$-function, where the integrals for the measures $dsd\omega$ and $d\xi$ are absolutely convergent (since $j\geq 1$). 
Applying Fubini to the previous equality we get
\[
(\partial^\alpha_tv)_{j} = \frac{1}{(2\pi)^{n+1}}\int \cI_j(t,x,\xi)\hat{v}_0(\xi)d\xi 
\]
with 
\[
\begin{aligned}
\cI_j(t,x,\xi) = \int\int e^{i(t-s)\omega+i\phi(s,x,\xi)}(i\omega)^\alpha h_j(s)A(s,x,\xi) dsd\omega.
\end{aligned}
\]
Let's set $\lambda=|\xi|$ and use the homogeneity of $\phi$ with respect to $\xi$ to write $\psi(t,x,\xi)=\phi(s,x,\hat{\xi})$, where $\hat{\xi} = \xi/|\xi|$, hence $\phi(s,x,\xi) = \lambda\psi(s,x,\xi)$. As it is customary when dealing with oscillatory integrals, we may assume the integration in $\xi$ happens away from the origin since we can always cut off a neighborhood of it and this only brings a smooth error. Finally, to alleviate notation we will occasionally omit the dependence on $x$ and $\xi$, which will be assumed fixed from now on, and we will write $g'(s)=\partial_sg(s)$ to denote differentiation with respect to the $s$ variable. \\

{\em 2.} Since we are working with an arbitrary $j\geq 1$, for this part we will omit the subscript and denote by $\cI(t,\lambda)$ and $h(s)$ any of the integrals $\cI_j(t,x,\xi)$ and function $h_j(s)$ respectively. We then analyze
\[
\cI(t,\lambda) = \int \int  e^{i(t-s)\omega+i\lambda\psi(s)}(i\omega)^\alpha h(s)A(s)dsd\omega.
\]
After the change of variables $\omega=\lambda\theta$ and the Taylor expansion of $\psi$,
\begin{equation}\label{eq:psi_Taylor}
\psi(s)= \psi(t)+(s-t)\Psi(t,s),\quad \Psi(t,s)=\int^1_0\psi'(\rho s + (1-\rho)t)d\rho,
\end{equation}
the integral rewrites as
\[
\cI(t,\lambda) = \lambda^{\alpha+1}e^{i\lambda\psi(t)}\int \int  e^{i\lambda(t-s)(\theta+\Psi(t,s))}(i\theta)^\alpha h(s)A(s)dsd\theta.
\]
We now take a smooth cut-off $\chi_1(\theta)\in C^\infty_c(\RR^n,[0,1])$ satisfying
\[
\chi_1(\theta)=1\quad \text{for}\quad |\theta|\leq 1\quad\text{and}\quad\chi_1(\theta)=0\quad\text{for}\quad|\theta|\geq 2,
\]
and split $\cI(t,\lambda) = I_1(t,\lambda)+I_2(t,\lambda)$ with
\[
I_1(t,\lambda) = \lambda^{\alpha+1}e^{i\lambda\psi(t)}\int \int  e^{i\lambda(t-s)(\theta+\Psi(t,s))}(i\theta)^\alpha \chi_1(\theta)h(s)A(s)dsd\theta.
\]

{\em 2.1.} To estimate $I_1$ we apply Fubini and the change of variables $\eta = \theta+\Psi(t,s)$ to get
\[
I_1(t,\lambda) = \lambda^{\alpha+1}e^{i\lambda\psi(t)}\int \int  e^{i\lambda(t-s)\eta}b_1(t,s,\eta,x,\xi)d\eta ds
\]
where
\[
b_1(t,s,\eta,x,\xi) = i^\alpha(\eta-\Psi(t,s,x,\xi))^\alpha \chi_1(\eta-\Psi(t,s,x,\xi))h(s)A(s,x,\xi).
\]
Applying the stationary phase method with critical point $(s,\eta)=(t,0)$, we deduce that as a function of $(x,\xi)$ and for some constant $B_0\in \CC$,
\[
\int \int  e^{i\lambda(t-s)\eta}b_1(t,s,\eta,x,\xi)d\eta ds =  B_0\lambda^{-1}b_1(t,t,0,x,\xi) \quad\text{mod}\quad S^{-2}(\RR^n_x\times\RR^n_\xi)
\]
since
\[
b_1(t,t,0,x,\xi) = (-i\psi'(t,x,\xi))^\alpha\chi_1(-\psi'(t,x,\xi))h(t)A(t,x,\xi)
\]
is a symbol of order zero depending smoothly on $t\geq 0$. In consequence, we obtain that
\[
I_1(t,\lambda) = e^{i\phi(t,x,\xi)}\beta_1(t,x,\xi)
\]
with $\beta_1(t,\cdot,\cdot)\in S^\alpha(\RR^n_x\times\RR^n_\xi)$ depending smoothly on $t\geq 0$.\\

{\em 2.2.} For $I_2(t,\lambda)$ we use the fact that
\begin{equation}\label{exp_identity}
-\frac{1}{i\lambda\theta}\partial_s(e^{i\lambda(t-s)\theta}) = e^{i\lambda(t-s)\theta},
\end{equation}
and integrate by parts twice to obtain
\[
I_2(t,\lambda) = \lambda^{\alpha-1}e^{i\lambda\psi(t)}\int \int  e^{i\lambda(t-s)\theta}(i\theta)^{\alpha-2} (1-\chi_1(\theta))\partial^2_s\left(e^{i\lambda\Psi(t,s)}h(s)A(s)\right)dsd\theta.
\]
Both integrals are absolutely convergent, thus, by differentiating and applying Fubini we get
\[
I_2(t,\lambda) = \lambda^{\alpha+1}e^{i\lambda\psi(t)}\int \int  e^{i\lambda(t-s)(\theta+\Psi(t,s))}\tilde{b}_2(t,s,\theta,x,\xi)d\theta ds,
\]
with
\[
\tilde{b}_2(t,s,\theta,x,\xi) = (i\theta)^{\alpha-2} (1-\chi_1(\theta))(-|\psi'(s)|^2 + 2i\lambda^{-1}\psi'(s)A'(s) + \lambda^{-2}A''(s)).
\]
Using once again the change of variables $\eta = \theta+\Psi(t,s)$, the previous rewrites as
\[
I_2(t,\lambda) = \lambda^{\alpha+1}e^{i\lambda\psi(t)}\int \int  e^{i\lambda(t-s)\eta}b_2(t,s,\eta,x,\xi)d\eta ds
\]
where
\[
b_2(t,s,\eta,x,\xi) = \tilde{b}_2(t,s,\eta-\Psi(t,s),x,\xi).
\]
Consequently, and similarly as in the previous case, by the stationary phase method with respect to the parameter $\lambda=|\xi|$ we get
\[
I_2(t,\lambda) = e^{i\phi(t,x,\xi)}\beta_2(t,x,\xi)
\]
for a symbol $\beta_2(t,\cdot,\cdot)\in S^\alpha(\RR^n_x\times\RR^n_\xi)$ depending smoothly on $t\geq 0$.\\

{\em 2.3.} We conclude from the previous that for each $j\geq 1$ there is a symbol $\beta_j(t,\cdot,\cdot)\in S^\alpha_{1,0}(\RR^n_x\times\RR^n_\xi)$, depending smoothly on $t\geq 0$, such that
\[
(\partial^\alpha_tv)_{j} = \frac{1}{(2\pi)^{n+1}}\int e^{i\phi(t,x,\xi)}\beta_j(t,x,\xi)\hat{v}_0(\xi)d\xi .
\]

{\em 3.} It remains to address the case $j=0$. We will estimate $(\partial^\alpha_tv)_{0,\epsilon}(t)$ for $t>0$ and we begin by writing
\[
\begin{aligned}
(\partial^\alpha_tv)_{0,\epsilon}(t) &=\frac{1}{2\pi}\int\int^\infty_0 e^{i(t-s)\omega}(i\omega)^\alpha h_0(s)(v(s)-v_0)\chi(\epsilon \omega)dsd\omega\\
&= \frac{1}{2\pi}\int\int e^{i(t-s)\omega}(i\omega)^\alpha h_0(s)(v(s)-v_0)\chi(\epsilon \omega)dsd\omega\\
&\quad - \frac{1}{2\pi}\int\int^0_{-\infty} e^{i(t-s)\omega}(i\omega)^\alpha h_0(s)(v(s)-v_0)\chi(\epsilon \omega)dsd\omega\\
&=:\mathcal{J}_1(\epsilon) - \mathcal{J}_2(\epsilon).
\end{aligned}
\]
The previous is justified by the fact that all the functions involved above are defined for negative times. \\

{\em 3.1.} The term $\cJ_1(\epsilon)$ integrates the variable $s$ in the whole real line so it is equal to
\[
\frac{1}{2\pi}\int \int e^{i(t-s)\omega}(i\omega)^\alpha h_0(s)v(s)\chi(\epsilon \omega)dsd\omega -  
\mathfrak{F}^{-1}\left[(i\omega)^\alpha \mathfrak{F}[h_0(s)](\omega)\chi(\epsilon \omega)\right](t)v_0.
\]
Taking the limit $\epsilon\to 0$, by dominate convergence we obtain
\[
\frac{1}{2\pi}\int \int e^{i(t-s)\omega}(i\omega)^\alpha h_0(s)v(s)dsd\omega -  
\mathfrak{F}^{-1}\left[(i\omega)^\alpha \mathfrak{F}[h_0(s)](\omega)\right](t)v_0,
\]
where $\mathfrak{F}[h_0(s)](\omega)\in \mathcal{S}(\RR)$ hence the second term belongs to $C^\infty([0,\infty);H^m(\RR^n))$. On the other hand, the first one is estimated analogously as for the cases $j\geq 1$, and we conclude by the stationary phase method that modulo a $C^\infty([0,\infty);H^m(\RR^n))$ function,
\[
\lim_{\epsilon\to 0}\cJ_1(\epsilon) = \frac{1}{(2\pi)^n}\int e^{i\phi(t,x,\xi)}\gamma_1(t,x,\xi)\hat{v}_0(\xi)d\xi
\]
for some symbol $\gamma_1(t,\cdot,\cdot)\in S^{\alpha}(\RR^n_x\times\RR^n_\xi)$ depending smoothly on $t\geq 0$.\\

{\em 3.2.} We finally estimate $\cJ_2(\epsilon)$. Fubini's theorem and the Fourier integral representation of $v$ give (modulo a smooth function)
\[
\begin{aligned}
\cJ_2(\epsilon) &= \frac{1}{(2\pi)^{n+1}}\int\int\int^0_{-\infty} e^{i(t-s)\omega}(i\omega)^\alpha h_0(s)(e^{i\lambda\psi(s)}A(s)-e^{i\lambda\psi(0)}A(0))\chi(\epsilon \omega)\hat{v}_0dsd\omega d\xi\\
&= \frac{1}{(2\pi)^{n+1}}\int j_{\epsilon}(t,x,\xi)\hat{v}_0(\xi)d\xi,
\end{aligned}
\]
where after the change of variables $\omega=\lambda\theta$, the kernel $j_{\epsilon}(t,x,\xi)$ rewrites as
\[
j_{\epsilon} =\lambda^{\alpha+1}\int\int^0_{-\infty} e^{i\lambda(t-s)\theta}(i\theta)^{\alpha}h_0(s)(e^{i\lambda\psi(s)}A(s)-e^{i\lambda\psi(0)}A(0))\chi(\epsilon \lambda\theta)dsd\theta,
\]
Using once again the identity \eqref{exp_identity} along with integration by parts, we get
\[
\begin{aligned}
j_{\epsilon} =
&\lambda^{\alpha}\int\int^0_{-\infty} e^{i\lambda(t-s)\theta}(i\theta)^{\alpha-1}\partial_s\left(h_0(s)(e^{i\lambda\psi(s)}A(s)-e^{i\lambda\psi(0)}A(0))\right)\chi(\epsilon \lambda\theta)dsd\theta\\
=&i\lambda^{\alpha+1}\int\int^0_{-\infty} e^{i\lambda((t-s)\theta+\psi(s))}(i\theta)^{\alpha-1}B_0(s)\chi(\epsilon \lambda\theta)dsd\theta\\
&+i\lambda^{\alpha}\int\int^0_{-\infty} e^{i\lambda((t-s)\theta+\psi(s))}(i\theta)^{\alpha-1}C_0(s)\chi(\epsilon \lambda\theta)dsd\theta\\
&+ i\lambda^{\alpha}\int\int^0_{-\infty} e^{i\lambda(t-s)\theta}(i\theta)^{\alpha-1}h'_0(s)(e^{i\lambda\psi(s)}A(s)-e^{i\lambda\psi(0)}A(0))\chi(\epsilon \lambda\theta)dsd\theta\\
&=: j_{\epsilon,1} + j_{\epsilon,2} + j_{\epsilon,3},
\end{aligned}
\]
where we denote $B_0(s)=h_0(s)\psi'(s)A(s)$ and $C_0(s)=h_0(s)A'(s)$.

Recalling that $h_0(s)=1$ in a neighborhood of $s=0$, the last of the three terms rewrites as
\[
j_{\epsilon,3}=i\lambda^{\alpha}\int\int e^{i\lambda(t-s)\theta}(i\theta)^{\alpha-1}h'_0(s)(e^{i\lambda\psi(s)}A(s)-e^{i\lambda\psi(0)}A(0))\chi(\epsilon \lambda\theta)dsd\theta,
\]
and following computations similar to previous cases (for the other functions $h_j$, $j\geq 1$), the stationary phase method and dominate convergence allows us to express the limit as $\epsilon\to 0$ of this integral in the form $e^{i\phi(t,x,\xi)}d_3(t,x,\xi)$, for some symbol $d_3(t,\cdot,\cdot)\in S^\alpha(\RR^n_x\times\RR^n_\xi)$ depending smoothly on $t\geq 0$.

Regarding
\[
j_{\epsilon,1}=i\lambda^{\alpha+1}\int\int^0_{-\infty} e^{i\lambda((t-s)\theta+\psi(s))}(i\theta)^{\alpha-1}B_0(s)\chi(\epsilon \lambda\theta)dsd\theta,
\]
where $B_0(s)=h_0(s)\psi'(s)A(s)$, let's consider a cut-off $\chi_2\in C^\infty_c(\RR^n,[0,1])$ satisfying that
\[
\chi_2(\theta)=1\quad \text{for}\quad \frac{1}{C_{1}}\leq |\theta|\leq C_1\quad\text{and}\quad \text{supp}(\chi_2)\subset\{\frac{1}{2C_{1}}\leq |\theta|\leq 2C_1\},
\]
with
\[
C_1 > \sup\{|\psi'(s,x,\zeta)|^{-1}+|\psi'(s,x,\zeta)|:s\in\text{supp}(h_0),\;x\in\text{supp}(v_0),\zeta\in\mathbb{S}^{n-1}\}.
\]
In particular, there is $\delta>0$ such that $|\theta - \psi'(s)|>\delta$ in $\text{supp}(1-\chi_2)$. 
We then split $j_{\epsilon,1} = j_{\epsilon,1,1}+j_{\epsilon,1,2}$ with
\[
j_{\epsilon,1,1} =i\lambda^{\alpha+1}\int\int^0_{-\infty} e^{i\lambda((t-s)\theta+\psi(s))}(i\theta)^{\alpha-1}B_0(s)\chi(\epsilon \lambda\theta)(1-\chi_2(\theta))dsd\theta
\]
and 
\[
j_{\epsilon,1,2} =i\lambda^{\alpha+1}\int\int^0_{-\infty} e^{i\lambda((t-s)\theta+\psi(s))}(i\theta)^{\alpha-1}B_0(s)\chi(\epsilon \lambda\theta)\chi_2(\theta)dsd\theta.
\]
Introducing the differential operator $Lf(s) = \frac{-1}{i(\theta-\psi'(s))}\partial_s f(s)$, we use the identity
\[
e^{i\lambda((t-s)\theta + \psi)} = \frac{1}{\lambda}L\left(e^{i\lambda((t-s)\theta + \psi)}\right)
\]
and integration by parts to get
\[
\begin{aligned}
j_{\epsilon,1,1} = &-i\lambda^{\alpha}e^{i\lambda((t-s)\theta+\psi(0))}(i\theta)^{\alpha-1}\chi(\epsilon \lambda\theta)(1-\chi_2(\theta))\frac{B_0(0,\theta) }{i(\theta-\psi'(0))}d\theta \\
&+i\lambda^{\alpha}\int\int^0_{-\infty} e^{i\lambda((t-s)\theta+\psi(s))}(i\theta)^{\alpha}\chi(\epsilon \lambda\theta)(1-\chi_2(\theta))B_1(s,\theta)dsd\theta,
\end{aligned}
\]
with $B_1 = L^t(B_0)$. By defining $B_N = (L^t)^NB_0$, for $N\geq 1$, an iteration of the previous gives
\[
\begin{aligned}
j_{\epsilon,1,1} &= -\sum_{l=0}^{N-1}\lambda^{\alpha-l}\int e^{i\lambda( t\theta + \psi(0))}(i\theta)^{\alpha-1}\chi(\epsilon \lambda\theta)(1-\chi_2(\theta))\frac{B_l(0,\theta) }{i(\theta-\psi'(0))}d\theta \\
&\quad+  \lambda^{\alpha-N}\int\int^0_{-\infty} e^{i\lambda((t-s)\theta+\psi(s))}(i\theta)^{\alpha-1}\chi(\epsilon \lambda\theta)(1-\chi_2(\theta))B_{N}(s,\theta)dsd\theta.
\end{aligned}
\]
Recalling that $\lambda\psi(0)=x\cdot\xi$ and noticing that all the integrals in $\theta$ are absolutely convergent, in the limit as $\epsilon\to 0$ we obtain by dominate convergence,
\begin{equation}\label{eq:j_1}
\begin{aligned}
j_{1,1}:=\lim_{\epsilon\to 0}j_{\epsilon,1,1}&=e^{ix\cdot\xi}\sum_{l=0}^{N-1}\lambda^{\alpha-l}\int e^{i\lambda t\theta }(i\theta)^{\alpha-1}(\chi_2(\theta)-1)\frac{B_l(0,\theta) }{i(\theta-\psi'(0))}d\theta \\
&\quad+  \lambda^{\alpha-N}\int\int^0_{-\infty} e^{i\lambda((t-s)\theta+\psi(s))}(i\theta)^{\alpha-1}(1-\chi_2(\theta))B_{N}(s,\theta)dsd\theta.
\end{aligned}
\end{equation}
Let's rewrite the previous in the following form
\begin{equation}\label{eq:asym_exp_j1}
e^{-ix\cdot\xi}j_{1,1}(t,x,\xi) = \sum_{l=1}^{N-1}a_{l}(t,x,\xi) + S_{N}(t,x,\xi)
\end{equation}
where
\[
a_{l}(t,x,\xi)=|\xi|^{\alpha-l}\int e^{i|\xi|t\theta }(i\theta)^{\alpha-1}(\chi_2(\theta)-1)\frac{B_l(0,\theta,x,\xi) }{i(\theta-\psi'(0,x,\xi))}d\theta
\]
and
\[
S_{N}(x,\xi) = |\xi|^{\alpha-N}\int\int^0_{-\infty} e^{i(|\xi|(t-s)\theta+\phi(s,x,\xi)-x\cdot\xi)}(i\theta)^{\alpha-1}(1-\chi_2(\theta))B_{N}(s,\theta,x,\xi)dsd\theta.
\]
It is clear from \eqref{eq:j_1} with $N=1$ that, since the phase functions involved are homogeneous of order 1 in $\xi$, all the derivatives $\partial^\beta_x\partial^\gamma_\xi j_{1,1}(t,x,\xi)$ are of tempered growth. Moreover, for any compact $K\subset\RR^n$ there is a constant $C_{K,N}>0$ such that
\[
|S_{N}(t,x,\xi)|\leq C_{K,N}(1+|\xi|)^{\alpha - N},\quad\forall (t,x,\xi)\in [0,T]\times K\times\RR^n.
\]
To conclude that \eqref{eq:asym_exp_j1} is indeed an asymptotic sum of symbols (depending smoothly on $t$), we verify that $a_l\in S^{\alpha-l}$, for all $l\geq 0$ (this is true after adding a cut-off supported away from $\xi=0$). This follows from the fact that $\psi$ is smooth with respect to $(x,\hat{\xi})$, with $\hat{\xi}=\xi/|\xi|$ smooth away from the origin and homogeneous of order zero, and also because the functions $B_l$ involve the symbol $A\in S^0$. For a complete statement of the property used here, we refer the reader, for instance, to \cite[\S 1]{GS1994}. 

We have shown that $e^{-ix\cdot\xi}j_{1,1}(t,\cdot,\cdot) \in S^\alpha(\RR^n_x\times\RR^n_\xi)$ depending smoothly on $t\geq 0$, and moreover, there is a $\Psi$DO of order $\alpha$, say $Q_1$, such that
\[
\lim_{\epsilon\to 0}\frac{1}{(2\pi)^{n+1}}\int j_{\epsilon,1,1}(t,x,\xi)\hat{v}_0(\xi)d\xi = Q_1[v_0].
\]

We now estimate $j_{\epsilon,1,2}$. Since the function $\chi_2(\theta)$ is compactly supported it makes the integral without $\chi(\epsilon\lambda\theta)$ to be absolutely convergent, thus, we can take the limit as $\epsilon\to 0$ and work with
\[
j_{1,2} := \lim_{\epsilon\to 0}j_{\epsilon,1,2}=i\lambda^{\alpha+1}\int\int^0_{-\infty} e^{i\lambda((t-s)\theta+\psi(s))}B_0(s)(i\theta)^{\alpha-1}\chi_2(\theta)dsd\theta.
\]
For $s<0<t$, we use
\[
e^{i\lambda((t-s)\theta + \psi)} = \frac{1}{i\lambda(t-s)}\partial_\theta\left(e^{i\lambda((t-s)\theta + \psi)}\right)
\]
and integrate by parts once to get
\[
j_{1,2}=-i\lambda^{\alpha}\int\int^0_{-\infty} e^{i\lambda((t-s)\theta+\psi(s))}\frac{B_0(s)}{i(t-s)}\partial_\theta\left((i\theta)^{\alpha-1}\chi_2(\theta)\right)dsd\theta.
\]
Let's denote $g(\theta) = \partial_\theta\left((i\theta)^{\alpha-1}\chi_2(\theta)\right)$. By using \eqref{eq:psi_Taylor}, Fubini's theorem, and the change of variables $\eta = \theta + \Psi(t,s)$, we get $j_{1,2}= e^{i\phi(t,x,\xi)}d_{1,2}(t,x,\xi)$ with
\[
d_{1,2}(t,x,\xi)=-|\xi|^{\alpha}\int^0_{-\infty}\int e^{i|\xi|(t-s)\eta}\frac{B_0(s,x,\xi)}{(t-s)}g(\eta-\Psi(t,s,x,\xi))d\eta ds.
\]
We notice that the previous integrals are absolutely convergent and integrate over compact regions. Since $A(s,x,\xi)\in S^0(\RR^n_x\times\RR^n_\xi)$ and $\Psi(t,s,x,\xi)$ is positive homogeneous of order zero in $\xi$, it is then simple to verify that $d_{1,2}(t,\cdot,\cdot)\in S^\alpha(\RR^n_x\times\RR^n_\xi)$ and depends smoothly on $t>0$, with an integrable singularity in $t=0$.

Similar to what was previously done, in the limit as $\epsilon\to 0$, the integral 
\[
j_{\epsilon,2}=i\lambda^{\alpha}\int\int^0_{-\infty} e^{i\lambda((t-s)\theta+\psi(s))}(i\theta)^{\alpha-1}C_0(s)\chi(\epsilon \lambda\theta)dsd\theta
\]
can be expressed as
\[
e^{ix\cdot\xi}d_{2,1}(t,x,\xi) + e^{i\phi(t,x,\xi)}d_{2,2}(t,x,\xi)
\]
for symbols $d_{2,1},d_{2,2}\in S^{\alpha-1}$, the first one smooth for $t\geq 0$, while the second one smooth for $t>0$ and with an integrable singularity at $t=0$.\\

We conclude that
\[
\mathcal{J}_2(0) = \lim_{\epsilon\to 0}\mathcal{J}_2(\epsilon) = Q[v_0](t,x) + \frac{1}{(2\pi)^{n+1}}\int e^{i\phi(t,x,\xi)}\gamma_2(t,x,\xi)\hat{v}_0(\xi)d\xi
\]
for some $\Psi$DO $Q$ of order $\alpha$ and $\gamma_2(t,\cdot,\cdot)\in S^\alpha(\RR^n_x\times\RR^n_\xi)$ depending smoothly for all $t>0$ and with an integrable singularity at $t=0$. Moreover, for $T>0$ such that $\supp(h_0)\subset[-T,T]$, there is some constant $C>0$ for which the next inequality holds,
\[
\|\mathcal{J}_2(0)\|_{L^1([0,T];H^{m-\alpha}(\Omega))} \leq C\|v_0\|_{H^m(\RR^n)}\left(1+\int^T_0\int^0_{-T}\frac{1}{(t-s)}ds\right).
\]
with last integral bounded because
\[
\int^T_0\int^0_{-T}\frac{1}{(t-s)}ds dt=\int^T_0\ln\left(1+\frac{T}{t}\right)dt = T\int^\infty_2\frac{\ln(z)}{(z-1)^2}dz<\infty.
\]
Notice the second equality above is obtained after the change of variables $z=1+\frac{T}{t}$.\\

{\em 4.} Summarizing all the previous computations, we have proven that there are symbols $\beta_j(t,\cdot,\cdot)$, $q(t,\cdot,\cdot)\in S^\alpha(\RR^n_x\times\RR^n_\xi)$, $j\geq 0$, with $q,\beta_j$ depending smoothly on $t\geq 0$ for $j\geq 1$, but $\beta_0$ only depending smoothly on $t>0$ and with an integrable singularity at $t=0^+$, such that, for any $v_0\in H^m_{comp}(\RR^n)$,
\[
(\partial^\alpha_tv)_{j} = \frac{1}{(2\pi)^{n}}\int e^{i\phi(t,x,\xi)}\beta_j(t,x,\xi)\hat{v}_0(\xi)d\xi\quad \text{for}\quad j\geq 1,
\]
and
\[
\begin{aligned}
(\partial^\alpha_tv)_{0} &= \frac{1}{(2\pi)^{n}}\int e^{i\phi(t,x,\xi)}\beta_0(t,x,\xi)\hat{v}_0(\xi)d\xi \\
&\quad + \frac{1}{(2\pi)^{n}}\int e^{ix\cdot\xi}q(t,x,\xi)\hat{v}_0(\xi)d\xi.
\end{aligned}
\]
The mapping properties of FIO's and $\Psi$DO's imply that for each $t>0$ and $j\geq 0$, $(\partial^\alpha_tv)_{j}(t,\cdot)\in H^{m-\alpha}(\RR^n)$. We also deduce from Remark \ref{rmk:def_conv} and Definition \ref{def:Caputo} (of the Caputo derivative), that for a fixed $T>0$ there is $N_T>0$ and a constant $C_T>0$ such that
\[
\|(\partial^\alpha_tv)_{0}\|_{L^1([0,T];H^{m-\alpha}(\RR^n))} +\sum_{j=1}^{N_T}\|(\partial^\alpha_tv)_{j}\|_{C([0,T];H^{m-\alpha}(\RR^n))} \leq C_T\|v_0\|_{H^{m}(\RR^n)}.
\]
\end{myproof}

The next lemma gives microlocal stability for the inverse problem, since the unattenuated observation map is known to be stable (under appropriate geometric conditions). The stability property will then follow by requiring the injectivity of the (attenuated) observation map.

\begin{lemma}\label{lemma:observation_maps}
    Let $\Lambda_{0}:H^1_0(\mathcal{K})\to H^1((0,T)\times\Gamma)$ be the measurement operator for the unattenuated case (i.e., $a\equiv 0$). Then, there is a constant $C>0$ such that
    \[
    \|\Lambda_{0} u_0\|_{H^1((0,T)\times\Gamma)}\leq C\left(\|\Lambda_\alpha u_0\|_{H^1((0,T)\times\Gamma)} + \|u_0\|_{H^{\alpha}(\RR^n)}\right),\quad\forall u_0\in H^1_0(\mathcal{K}).
    \]
\end{lemma}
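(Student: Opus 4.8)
The plan is to exploit that the fractional attenuation is a lower-order perturbation, so that the difference between the attenuated and unattenuated observations is controlled by a weaker norm of $u_0$. Concretely, let $u$ solve \eqref{frac_PAT} with $\supp(u_0)\subset\mathcal{K}$ and let $v$ solve the unattenuated system \eqref{unatt_PAT} with $v_0=u_0$, so that $\Lambda_\alpha u_0=u|_{(0,T)\times\Gamma}$ and $\Lambda_0 u_0=v|_{(0,T)\times\Gamma}$. Setting $w=u-v$, a direct computation using $\boxc v=0$ shows that $w$ solves the attenuated problem
\begin{equation*}
\boxca w = -a\,\partial^\alpha_t v \quad\text{in }(0,T)\times\RR^n,\qquad (w,\partial_t w)|_{t=0}=\Big(0,\,-\tfrac{a}{\Gamma(2-\alpha)}u_0\Big),
\end{equation*}
whose source now involves only the unattenuated solution $v$. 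By finite propagation speed (Theorem \ref{thm:energy_frac_wave}) and $\supp(u_0)\subset\mathcal{K}\subset\Omega$, all three fields stay supported in a fixed ball $B$ for $t\in[0,T]$, so I may regard the problems as IBVPs in $B$ with null Dirichlet data and treat $(0,T)\times\Gamma$ as an interior, non-characteristic (time-like) hypersurface. Since $v|_\Gamma=u|_\Gamma-w|_\Gamma$, the triangle inequality reduces the claim to the single estimate $\|w|_{(0,T)\times\Gamma}\|_{H^1((0,T)\times\Gamma)}\le C\|u_0\|_{H^\alpha(\RR^n)}$.

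The second step is an energy estimate for $w$ in this weaker norm. By Theorem \ref{thm:reg_frac} (the case $m=\alpha$), the source satisfies $\|a\,\partial^\alpha_t v\|_{L^1((0,T);L^2)}\le C\|\partial^\alpha_t v\|_{L^1((0,T);L^2)}\le C\|u_0\|_{H^\alpha}$, while the initial velocity obeys $\|\tfrac{a}{\Gamma(2-\alpha)}u_0\|_{L^2}\le C\|u_0\|_{L^2}\le C\|u_0\|_{H^\alpha}$. The energy-space well-posedness of the attenuated IBVP (from \cite{Ya2022}, cf. the remark following Theorem \ref{thm:regularity}) then yields $w\in C([0,T];H^1_0(B))\cap C^1([0,T];L^2(B))$ together with the bound $\|(w,\partial_t w)\|_{C([0,T];H^1_0\times L^2)}\le C\|u_0\|_{H^\alpha}$.

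The decisive third step is to upgrade this interior energy bound to an $H^1$ bound on the trace at $(0,T)\times\Gamma$. Here I would invoke the sharp (hidden) trace regularity for the wave operator $\boxc$: for $\boxc W=G\in L^1((0,T);L^2(B))$ with finite-energy Cauchy data and null Dirichlet data, the restriction to an interior time-like hypersurface lies in $H^1$ of that hypersurface, with a bound by the energy of the data plus $\|G\|_{L^1((0,T);L^2)}$ (see \cite{LasieckaTriggiani}; this is precisely the boundedness underlying $\Lambda_0$). To apply it to $w$ I rewrite its equation as $\boxc w = -a\,\partial^\alpha_t v - a\,\partial^\alpha_t w$ and control the extra term by writing $\partial^\alpha_t w=\cI_{1-\alpha}(\partial_t w)$ (recall $w(0)=0$) and using Lemma \ref{lemma:cont_frac_int} to get $\|a\,\partial^\alpha_t w\|_{L^1((0,T);L^2)}\le C\|\partial_t w\|_{L^1((0,T);L^2)}\le CT\,\|\partial_t w\|_{C([0,T];L^2)}\le C\|u_0\|_{H^\alpha}$. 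The trace estimate then gives $\|w|_{(0,T)\times\Gamma}\|_{H^1((0,T)\times\Gamma)}\le C\|u_0\|_{H^\alpha}$, and combining with the triangle inequality of the first step completes the proof.

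The main obstacle I anticipate is justifying the sharp trace regularity on the interior time-like surface $(0,T)\times\Gamma$ at the energy level: the naive Sobolev trace theorem loses half a derivative and would only place $w|_\Gamma$ in $H^{1/2}$, which is insufficient. This genuinely hyperbolic ingredient must be quoted carefully, together with the reduction of $\boxca$ to $\boxc$ via the lower-order fractional term. A secondary point requiring care is that Theorem \ref{thm:reg_frac} controls $\partial^\alpha_t v$ only for the \emph{unattenuated} $v$; this is exactly why the decomposition is chosen so that the source of $w$ is $-a\,\partial^\alpha_t v$ rather than $-a\,\partial^\alpha_t u$.
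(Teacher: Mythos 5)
Your proposal is correct and follows essentially the same route as the paper: the same decomposition into $u$ and the unattenuated $v$ with error $w$ solving \eqref{eq:w}, the same energy bound $\|(w,\partial_t w)\|_{L^\infty([0,T];\mathcal{H}^1_0)}\leq C\|u_0\|_{H^\alpha}$ via Theorem \ref{thm:regularity} and Theorem \ref{thm:reg_frac}, and the same reduction of the boundary trace of $w$ to trace estimates for the \emph{unattenuated} wave group with the fractional terms $a\partial^\alpha_t v$ and $a\partial^\alpha_t w$ treated as sources. The only real difference is that you quote the sharp interior-trace regularity for $\square_c$ as a black box (citing a reference that is not in this paper's bibliography), whereas the paper proves precisely this ingredient through the Duhamel/fixed-point formula and the Bochner-integral estimates of Appendix \ref{Appx:int_Banach} --- which is also why the paper's version of the estimate requires, in addition to the $L^1((0,T);L^2)$ norm, the $L^\infty((\delta,T);L^2)$ norms of the sources, supplied respectively by Theorem \ref{thm:reg_frac} and the energy bound.
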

\begin{proof}

Let $u$ be the solution to \eqref{frac_PAT} and $v$ to \eqref{unatt_PAT} with initial condition $v_0=u_0$. We see that $w=v-u$ satisfies 
\begin{equation}\label{eq:w}
\square_{c,\alpha} w = a\partial^\alpha_tv,\quad (w,\partial_tw)|_{t=0} =(0,\frac{a}{\Gamma(2-\alpha)}u_0),
\end{equation}
therefore, the well-posedness of the forward problem in Theorem \ref{thm:regularity} gives us the inequality
\[
\|(w,\partial_tw)\|_{L^\infty([0,T];\mathcal{H}^{1}_0(\Omega))}\leq C\left(\|u_0\|_{L^2(\Omega)}+\|a\partial^\alpha_tv\|_{L^1([0,T];L^2(\Omega)))}\right),
\]
but because $a(x)$ is compactly supported within $\Omega$,  
we can replace the $L^1([0,T];L^2(\Omega))$-norm by the norm in $L^1([0,T];L^2(\RR^n))$. 
From Theorem \ref{thm:reg_frac}, we then deduce the inequality
\begin{equation}\label{energy_est_w}
\|(w,\partial_tw)\|_{L^\infty([0,T];\mathcal{H}^{1}_0(\Omega))}\leq 
C\|u_0\|_{H^\alpha(\Omega)}.
\end{equation}

We now focus on the trace of the error function $w$, in particular, in determining the regularity of $Rw= w|_{(0,T)\times\partial\Omega}$ where we denote by $R$ the boundary trace operator that is well defined since $w\in H^1(\Omega)$. 

Let's recall the fixed point formulation of equation $\square_{c,\alpha} w = a\partial^\alpha_tv$ from section \ref{sec:forward_pblm}, 
\[
\begin{aligned}
w(t) = &U_1(t)[0,\frac{a}{\Gamma(2-\alpha)}u_0] + \int^t_0U_1(t-s)[0,a\partial^\alpha_tv(s)]ds\\&+\int^t_0\int^s_0U_1(t-s)Q(s-r)(w(r),\partial_rw(r))^tdrds,
\end{aligned}
\]
where $U_1 = \Pi_1\UU$, with $\Pi_1$ the projection to the first component and $\UU$ the semigroup associated with the (unattenuated) wave equation, i.e. the initial source-to-solution map. The integral above is understood in the Bochner integral sense (more details of this can be found in  Appendix \ref{Appx:int_Banach}).

We apply the trace operators and obtain
\begin{equation}\label{Duhamel}
\begin{aligned}
Rw(t) =& RU_1(t)[0,\frac{a}{\Gamma(2-\alpha)}u_0] + \int^t_0RU_1(t-s)[0,a\partial^\alpha_tv(s)]ds\\
&+\int^t_0RU_1(t-s)\int^s_0Q(s-r)(w(r),\partial_rw(r))^tdrds,
\end{aligned}
\end{equation}
where we notice that, due to finite propagation speed, there is $\delta>0$ for which $RU_1(t)[f,g]=0$ for all $t\in (0,\delta)$ and any pair of initial conditions in $H^1_0(\mathcal{K})\times L^2(\mathcal{K})$.

Following computations from Appendix \ref{Appx:int_Banach}, the $H^1((0,T)\times\partial\Omega)$-norms of the integral terms are bounded by
\[
\begin{aligned}
&\|F_1\|_{L^1((0,T);L^2(\Omega))}+\|F_1\|_{L^\infty((\delta,T);L^2(\Omega))}\\
&+\|F_2\|_{L^1((0,T);L^2(\Omega))}+\|F_2\|_{L^\infty((\delta,T);L^2(\Omega))}
\end{aligned}
\]
with $F_1(s)=a\partial^\alpha_tv(s)$ and 
\[
F_2(s)=\int^s_0Q(s-r)(w(r),\partial_rw(r))^tdr,
\]
and further bounded from above by a positive constant times
\[
\begin{aligned}
&\|\partial^\alpha_tv\|_{L^1((0,T);L^2(\Omega))}+\|\partial^\alpha_tv\|_{L^\infty((\delta,T);L^2(\Omega))}\\
&+\|\partial_tw\|_{L^1((0,T);L^2(\Omega))}+\|\partial_tw\|_{L^\infty((\delta,T);L^2(\Omega))}.
\end{aligned}
\]
By recalling the estimate from Theorem \ref{thm:reg_frac} and \eqref{energy_est_w}, we obtain a final upper bound of a positive constant times $\|u_0\|_{H^{\alpha}(\Omega)}$.
In other words, by denoting $Ku_0=Rw|_{(0,T)\times\Gamma}$ 
, for any $u_0\in H^1_0(\mathcal{K})$ we have
\[
\begin{aligned}
&\|Ku_0\|_{H^{1}((0,T)\times \Gamma)}
\leq C\|u_0\|_{H^{\alpha}(\Omega)}.
\end{aligned}
\]
The proof is then complete by noticing that $\Lambda_\alpha u_0 = Ru$, $\Lambda_{0}u_0 = Rv$ and $v = u + w$, hence $K u_0 = \Lambda_0 u_0 - \Lambda_{\alpha}u_0$.
\end{proof}

\begin{definition}
    For a compact $\mathcal{K}\subset\Omega$ and $\Gamma\subset\partial\Omega$ relatively open. We say the pair $(\mathcal{K},\Gamma)$ satisfies the visibility condition of singularities if all unit speed geodesics issued from $\mathcal{K}$ reach  $\Gamma$ non-tangentially and in finite time. 
    We also define $T_1(\mathcal{K},\Gamma)$ as the maximum of the exit times of unit speed geodesic issued from $\mathcal{K}$ and exiting $\Omega$ through $\Gamma$.
\end{definition}

\begin{remark}\label{rmk:stab_comp_data}
    In the case of complete data, i.e. $\Gamma=\partial\Omega$, we might replace the previous condition with the manifold $(\Omega,c^{-2}dx^2)$ being non-trapping and $\partial\Omega$ strictly convex. In this case, the observation time $T$ has to be larger than $T_1(\Omega)$ defined as half the length of the longest geodesic segment contained in $\Omega$. We also take initial conditions in $H^1_0(\Omega)$.
\end{remark}

\begin{theorem}\label{thm:stability}
    Under the visibility condition of singularities and the foliation condition of theorem \ref{thm:uniqueness_att}, if $T>T_1(\mathcal{K},\Gamma)$, then there is $C>0$ such that
    \[
    \|u_0\|_{H^1(\Omega)}\leq C\|\Lambda_\alpha u_0\|_{H^1((0,T)\times\Gamma)},\quad \forall u_0\in H^1_0(\mathcal{K}).
    \]
\end{theorem}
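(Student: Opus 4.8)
The plan is to obtain the stability estimate for the attenuated problem by transferring the known stability of the \emph{unattenuated} observation map through the perturbation inequality of Lemma~\ref{lemma:observation_maps}, and then to remove the resulting lower-order term by a compactness--uniqueness argument. First I would invoke the microlocal stability of $\Lambda_0$: under the visibility condition of singularities and for $T>T_1(\mathcal{K},\Gamma)$, the analysis of Stefanov and Uhlmann (see \cite{StUh2009}) provides a constant $C_0>0$ such that
\[
\|u_0\|_{H^1(\Omega)} \leq C_0\,\|\Lambda_0 u_0\|_{H^1((0,T)\times\Gamma)}, \qquad \forall\, u_0 \in H^1_0(\mathcal{K}).
\]
Combining this with Lemma~\ref{lemma:observation_maps} yields, for a new constant $C>0$,
\begin{equation}\label{eq:almost_stab}
\|u_0\|_{H^1(\Omega)} \leq C\left(\|\Lambda_\alpha u_0\|_{H^1((0,T)\times\Gamma)} + \|u_0\|_{H^\alpha(\RR^n)}\right), \qquad \forall\, u_0 \in H^1_0(\mathcal{K}).
\end{equation}
The only remaining task is to absorb the term $\|u_0\|_{H^\alpha(\RR^n)}$, which is genuinely of lower order since $\alpha<1$.

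I would then argue by contradiction. If the claimed estimate fails, there is a sequence $\{u_0^k\}\subset H^1_0(\mathcal{K})$ with $\|u_0^k\|_{H^1(\Omega)}=1$ and $\|\Lambda_\alpha u_0^k\|_{H^1((0,T)\times\Gamma)}\to 0$. Because $\mathcal{K}$ is compact and $\alpha<1$, the inclusion $H^1_0(\mathcal{K})\hookrightarrow H^\alpha(\RR^n)$ is compact (Rellich--Kondrachov), so after passing to a subsequence $u_0^k$ converges strongly in $H^\alpha(\RR^n)$ to some $u_0^\ast$ and weakly in $H^1(\Omega)$. Applying \eqref{eq:almost_stab} to the differences $u_0^k-u_0^l$, and noting that both $\|\Lambda_\alpha(u_0^k-u_0^l)\|_{H^1((0,T)\times\Gamma)}$ and $\|u_0^k-u_0^l\|_{H^\alpha(\RR^n)}$ tend to zero as $k,l\to\infty$, I conclude that $\{u_0^k\}$ is Cauchy in $H^1(\Omega)$. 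Hence it converges strongly to $u_0^\ast$ with $\|u_0^\ast\|_{H^1(\Omega)}=1$, so in particular $u_0^\ast\neq 0$.

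Finally, the continuous dependence of the forward problem established in Theorem~\ref{thm:regularity} makes $\Lambda_\alpha$ continuous on $H^1_0(\mathcal{K})$, whence $\Lambda_\alpha u_0^\ast = \lim_k \Lambda_\alpha u_0^k = 0$. The injectivity of $\Lambda_\alpha$ furnished by Corollary~\ref{cor:injectivity} (valid under the foliation hypothesis for the observation time at hand) then forces $u_0^\ast=0$, contradicting $\|u_0^\ast\|_{H^1(\Omega)}=1$. This contradiction establishes the stated inequality. The complete-data case is identical upon replacing the visibility condition by the hypotheses of Remark~\ref{rmk:stab_comp_data} and $\mathcal{K}$ by $\Omega$.

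I expect the main obstacle to be the careful reconciliation of the two geometric inputs feeding the argument at a common observation time $T$: on one side the unattenuated stability requires the visibility time $T>T_1(\mathcal{K},\Gamma)$ together with the exact $H^1(\Omega)\to H^1((0,T)\times\Gamma)$ functional setting, while on the other the compactness--uniqueness step relies on injectivity of $\Lambda_\alpha$ from Corollary~\ref{cor:injectivity}, which is phrased in terms of the foliation time $T_0(\mathcal{K},\Sigma)$. Verifying that the stated hypotheses indeed guarantee both simultaneously—and that the perturbation term in Lemma~\ref{lemma:observation_maps} is controlled in the $H^\alpha$ norm precisely so that the compact embedding applies—is the delicate point of the proof.
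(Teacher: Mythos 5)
Your proposal is correct and follows essentially the same route as the paper: combine the unattenuated stability (from the visibility condition) with Lemma~\ref{lemma:observation_maps} to get the estimate with the lower-order term $\|u_0\|_{H^\alpha}$, then remove it by compactness--uniqueness using the compact embedding $H^1_0(\mathcal{K})\hookrightarrow H^\alpha$ and the injectivity of $\Lambda_\alpha$ from the foliation condition. The only difference is that the paper invokes the compactness--uniqueness step as ``standard'' while you spell out the contradiction argument explicitly; your closing concern about reconciling the times $T_1(\mathcal{K},\Gamma)$ and $T_0(\mathcal{K},\Sigma)$ is equally present (and equally unaddressed) in the paper's own proof.
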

\begin{proof}
We start from the result of lemma \ref{lemma:observation_maps}, 
\[
\|\Lambda_{0} u_0\|_{H^1((0,T)\times\Gamma)}\leq C\left(\|\Lambda_\alpha u_0\|_{H^1((0,T)\times\Gamma)}+\|u_0\|_{H^{\alpha}(\Omega)}\right).
\]
The visibility condition implies stability for the unattenuated inverse problem, thus $\|u_0\|_{H^1(\Omega)}\leq C\|\Lambda_{0} u_0\|_{H^1((0,T)\times\Gamma)}$ for some constant $C>0$, which gives
\[
\|u_0\|_{H^1(\Omega)}\leq C\left(\|\Lambda_\alpha u_0\|_{H^1((0,T)\times\Gamma)}+\|u_0\|_{H^{\alpha}(\Omega)}\right).
\]
for a possibly different constant $C>0$.

Finally, the foliation condition implies the injectivity of $\Lambda_\alpha$ (as stated in Theorem \ref{thm:uniqueness_att}), and added to the compactness of the injection $H^1_0(\Omega)\to H^{\alpha}_0(\Omega)$, a standard compactness-uniqueness argument leads to the desired inequality.
\end{proof} 

\section{Reconstruction}

We adapt the dissipative sharp time reversal approach introduced by the authors in \cite{AP2018} to the setting of fractional memory terms. 
Let's begin by defining the time-reversed system and operator based on the following observation.
\begin{lemma}\label{lemma:adjoint}
For any pair of functions $f,g$ such that $\partial_tf,\partial_tg\in L^\infty((0,T);L^2(\Omega))$, then
\[
\langle a\partial^\alpha_tf,c^{-2}\partial_tg\rangle = \langle c^{-2}\partial_tf,aI^{1-\alpha}_T\partial_tg\rangle
\]
where $\langle\cdot,\cdot\rangle$ is the inner product in $L^2((0,T)\times\Omega)$ and $I^{1-\alpha}_T$ is a time-reversed fractional integral operator defined as
\[
I^{1-\alpha}_Tu(t) := \frac{1}{\Gamma(1-\alpha)}\int^T_t\frac{u(s)}{(s-t)^\alpha}ds.
\]
\end{lemma}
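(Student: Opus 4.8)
The statement is an adjointness (integration-by-parts in time) identity for the Caputo derivative, asserting that the $L^2((0,T)\times\Omega)$-adjoint of $f\mapsto \partial_t^\alpha f$, when paired against $c^{-2}\partial_t g$, is the time-reversed fractional integral $I_T^{1-\alpha}$ applied to $\partial_t g$. Since $a(x)$ and $c^{-2}(x)$ are purely spatial weights and the pairing integrates over $x$ as well, the whole matter reduces to a one-dimensional (in time) Fubini/Tonelli computation for fixed $x$. The plan is therefore to fix $x\in\Omega$, write out both sides as iterated time integrals, and swap the order of integration.

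\emph{Key steps.} First I would recall from Definition~\ref{def:Caputo} that for functions with $\partial_t f\in L^\infty((0,T);L^2(\Omega))$ the Caputo derivative coincides with the classical expression $\partial_t^\alpha f(t)=\tfrac{1}{\Gamma(1-\alpha)}\int_0^t (t-s)^{-\alpha}\partial_s f(s)\,ds=\cI_{1-\alpha}[\partial_t f](t)$, exactly the fractional integral operator of Lemma~\ref{lemma:cont_frac_int}; the hypothesis $\partial_t f\in L^\infty((0,T);L^2)$ guarantees this (via the integrability of $t^{-\alpha}$ near $0$) and that all the integrals below are absolutely convergent. Then I would expand the left-hand side:
\[
\langle a\partial^\alpha_t f, c^{-2}\partial_t g\rangle
= \int_\Omega \frac{a(x)}{c^2(x)}\int_0^T \Big(\frac{1}{\Gamma(1-\alpha)}\int_0^t \frac{\partial_s f(x,s)}{(t-s)^\alpha}\,ds\Big)\,\partial_t g(x,t)\,dt\,dx.
\]
The inner double time-integral is over the triangular region $\{0<s<t<T\}$. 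Applying Tonelli's theorem (justified by absolute integrability of $(t-s)^{-\alpha}|\partial_s f|\,|\partial_t g|$ on the triangle, which follows from the $L^\infty_t L^2_x$ bounds and Cauchy--Schwarz in $x$) I would interchange the $s$- and $t$-integrations, converting $\int_0^T\!\int_0^t$ into $\int_0^T\!\int_s^T$. This regroups the kernel against $\partial_t g$ to give $\int_s^T (t-s)^{-\alpha}\partial_t g(x,t)\,dt = \Gamma(1-\alpha)\,I_T^{1-\alpha}[\partial_t g](x,s)$, which after relabelling the remaining time variable $s\mapsto t$ yields precisely the right-hand side $\langle c^{-2}\partial_t f, a\,I_T^{1-\alpha}\partial_t g\rangle$.

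\emph{Main obstacle.} The computation itself is elementary; the only genuine point requiring care is the \emph{rigorous justification of the Fubini/Tonelli interchange} and the reduction of the distributional Caputo derivative to the pointwise convolution formula. I would first establish the identity for smooth functions (where everything is classical) and then pass to the general case by density, using the continuity of $\cI_{1-\alpha}$ on $L^1((0,T);L^2)$ and the analogous bound for $I_T^{1-\alpha}$ (which is the time-reversal $t\mapsto T-t$ of $\cI_{1-\alpha}$, hence enjoys the same mapping properties from Lemma~\ref{lemma:cont_frac_int}). One should also note that no boundary terms at $t=0$ or $t=T$ appear here: unlike ordinary integration by parts, this is a pure Fubini swap on the kernel, so the absence of boundary contributions is automatic rather than something to be checked.
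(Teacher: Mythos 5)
Your proposal is correct and takes essentially the same route as the paper's proof: the paper also writes $\langle a\partial^\alpha_t f, c^{-2}\partial_t g\rangle$ as an iterated integral over the triangular region $\{0<s<t<T\}$ and applies Fubini's theorem to convert $\int_0^T\!\int_0^t \,ds\,dt$ into $\int_0^T\!\int_s^T \,dt\,ds$, recognizing the inner integral as $\Gamma(1-\alpha)\,I_T^{1-\alpha}[\partial_t g](x,s)$. The only difference is that the paper states the Fubini interchange without elaboration, whereas you spell out the Tonelli justification (via Cauchy--Schwarz in $x$ and integrability of $(t-s)^{-\alpha}$) and a density reduction, which are sound but not strictly needed beyond the absolute-integrability remark.
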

\begin{proof}
Direct computations employing Fubini's theorem give
\[
\begin{aligned}
\langle a\partial^\alpha_tf,c^{-2}\partial_tg\rangle &= \int_\Omega\int^T_0c^{-2}\overline{g'(x,t)}\left(\frac{a(x)}{\Gamma(1-\alpha)}\int^t_0\frac{f'(x,s)}{(t-s)^\alpha}ds\right)dtdx\\
&=\int_\Omega\int^T_0\int^T_s\frac{a(x)c^{-2}\overline{g'(x,t)}f'(x,s)}{\Gamma(1-\alpha)(t-s)^\alpha}dtdsdx\\
&= \int_\Omega\int^T_0 c^{-2}f'(x,s)\left(\frac{a(x)}{\Gamma(1-\alpha)}\int^T_s\frac{\overline{g'(x,t)}}{(t-s)^\alpha}dt\right)dsdx.
\end{aligned}
\]
\end{proof}

We define the Time-Reversal operator as the map $A_\alpha:h\mapsto (v,\partial_tv)|_{t=0}$, that takes the boundary data $h\in H^1((0,T)\times\partial\Omega)$ to the restriction at time $t=0$ of the solution $v$ to the final boundary value system:
\[
\left\{\begin{array}{l}
\partial_t^2v - c^2\Delta v - a(x)I^{1-\alpha}_T\partial_tv =0,\\
(v,\partial_tv)|_{t=T} = (\phi,0),\\
v|_{(0,T)\times\partial\Omega} = h.
\end{array}\right.
\]
Here $\phi$ is the harmonic extension of $h(\cdot,T)$ to $\Omega$. 

For any $u_0\in H^1_0(\Omega)$
we take $h=\Lambda_\alpha u_0$. The error function $w=u-v$ then solves
\[
\left\{\begin{array}{l}
\partial_t^2w - c^2\Delta w = - a(x)\partial^{\alpha}_tu- a(x)I^{1-\alpha}_T\partial_tv,\\
(\bu,\partial_t\bu)|_{t=T} = (u(T)-\phi,\partial_tu(T)),\\
w|_{(0,T)\times\partial\Omega} = 0,
\end{array}\right.
\]
therefore, standard energy computations along with Lemma \ref{lemma:adjoint} for the real-valued functions $u,v$ give
\[
\begin{aligned}
E_\Omega(w,T)-E_\Omega(w,0) &= -2\langle a\partial^{\alpha}_tu,c^{-2}\partial_tw\rangle - 2\langle aI^{1-\alpha}_T\partial_tv,c^{-2}\partial_tw \rangle\\
&= -2\langle a\partial^{\alpha}_tu,c^{-2}\partial_tu\rangle +2\langle a\partial^{\alpha}_tu,c^{-2}\partial_tv\rangle\\
&\quad - 2\langle aI^{1-\alpha}_T\partial_tv,c^{-2}\partial_tu \rangle + 2\langle aI^{1-\alpha}_T\partial_tv,c^{-2}\partial_tv \rangle\\
&= -2\langle a\partial^{\alpha}_tu,c^{-2}\partial_tu\rangle + 2\langle a\partial^\alpha_tv,c^{-2}\partial_tv \rangle.
\end{aligned}
\]
Recalling the definition of the energy norm \eqref{def:energy_norm}, we see that
\[
\begin{aligned}
E_\Omega(w,0) &=\|(w(0),\partial_tw(0))\|^2_{H^1_0(\Omega)\times L^2(\Omega)}\quad\text{and}\\ E_\Omega(w,T)&=\|(u(T)-\phi,\partial_tu(T))\|^2_{H^1_0(\Omega)\times L^2(\Omega)},
\end{aligned}
\]
therefore, it follows from Lemma \ref{lemma:positivity} and the identity
\[
\|f-\phi\|^2_{H^1_0(\Omega)}=\|f\|^2_{H^1_0(\Omega)}-\|\phi\|^2_{H^1_0(\Omega)},\quad\forall f\in H^1_0(\Omega),
\]
that
\[
\begin{aligned}
\|(w(0),\partial_tw(0))\|^2_{H^1_0(\Omega)\times L^2(\Omega)}&= E_\Omega(w,T) + 2\langle a\partial^{\alpha}_tu,c^{-2}\partial_tu\rangle-2\langle a\partial^\alpha_tv,c^{-2}\partial_tv \rangle\\
&\leq \|(u(T)-\phi,\partial_tu(T))\|^2_{H^1_0(\Omega)\times L^2(\Omega)} + 2\langle a\partial^{\alpha}_tu,c^{-2}\partial_tu\rangle\\
&\leq \|(u(T),\partial_tu(T))\|^2_{H^1_0(\Omega)\times L^2(\Omega)} + 2\langle a\partial^{\alpha}_tu,c^{-2}\partial_tu\rangle\\
&= E_{\RR^n}(u,T)- E_{\RR^n\setminus\Omega}(u,T)+ 2\langle a\partial^{\alpha}_tu,c^{-2}\partial_tu\rangle\\
&= E_{\RR^n}(u,0)- E_{\RR^n\setminus\Omega}(u,T)\\
&= \|(u_0,-\textstyle\frac{a}{\Gamma(2-\alpha)}u_0))\|^2_{H^1_0(\Omega)\times L^2(\Omega)}- E_{\RR^n\setminus\Omega}(u,T),
\end{aligned}
\]
 where we have used Theorem \ref{thm:energy_frac_wave} (dissipation of energy). Then, by Poincare's inequality we obtain
\begin{equation}\label{ineq:K1}
    \|w(0)\|^2_{H^1_0(\Omega)}\leq \left(1+\left(\textstyle\frac{C_P\|a\|_{C(\RR^n)}}{\Gamma(2-\alpha)}\right)^2\right)\|u_0\|^2_{H^1_0(\Omega)} - E_{\RR^n\setminus\Omega}(u,T),
\end{equation}
for some positive constant $C_P$ depending on $\Omega$.
Defining the error operator 
\[
K := \text{Id} - \Pi_1A_\alpha\Lambda_\alpha,\quad Ku_0 =w(0),
\]
where $\Pi_1$ stands for the projection to the first component, the reconstruction formula is a consequence of $K$ being a contraction in $H^1_0(\Omega)$. Assuming the damping coefficient is small enough, the previous statement will follow from the next observability-type inequality.

\begin{lemma}\label{lemma:obs_ineq}
Under the same hypothesis of the stability theorem and assuming $\Gamma=\partial\Omega$, there is a constant $C_0>0$ such that for all $u_0\in H^1_0(\Omega)$,
\[
\|u_0\|^2_{H^1_0(\Omega)}
\leq C_0 E_{\RR^n\setminus\Omega}(u,T),
\] 
with $u$ solution to \eqref{frac_PAT}. The constant $C_0>0$ is uniform with respect to the damping coefficient $a(x)$ in a small ball in $C_c(\Omega)$ centered at zero.
\end{lemma}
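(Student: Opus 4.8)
The plan is to reduce the attenuated observability inequality to the corresponding inequality for the unattenuated wave equation, and then control the discrepancy between the two problems by a perturbation argument exploiting the smallness of the damping. Let $v$ be the solution of the unattenuated system \eqref{unatt_PAT} with $v_0=u_0$, and set $w=v-u$ as in the proof of Lemma \ref{lemma:observation_maps}, so that $v=u+w$. Since the energy seminorm at a fixed time is subadditive (Minkowski), for every $t$ we have $E_{\RR^n\setminus\Omega}(v,t)^{1/2}\le E_{\RR^n\setminus\Omega}(u,t)^{1/2}+E_{\RR^n\setminus\Omega}(w,t)^{1/2}$, hence $E_{\RR^n\setminus\Omega}(v,T)\le 2\,E_{\RR^n\setminus\Omega}(u,T)+2\,E_{\RR^n\setminus\Omega}(w,T)$. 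It therefore suffices to (i) bound $\|u_0\|_{H^1_0(\Omega)}^2$ from above by the exterior energy of the \emph{free} wave $v$ at time $T$, and (ii) show that the error $w$ carries only an $O(\|a\|^2)$ fraction of that energy.

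For step (i), I would invoke conservation of energy for the unattenuated equation: since $\square_c v=0$ and $(v,\partial_t v)|_{t=0}=(u_0,0)$, one has $E_{\RR^n}(v,T)=E_{\RR^n}(v,0)=\|\nabla u_0\|_{L^2(\RR^n)}^2=\|u_0\|_{H^1_0(\Omega)}^2$, the last identity using that $u_0$ is supported in $\Omega$ and the energy-norm convention \eqref{def:energy_norm}. Splitting $E_{\RR^n}(v,T)=E_\Omega(v,T)+E_{\RR^n\setminus\Omega}(v,T)$, the target inequality $\|u_0\|_{H^1_0(\Omega)}^2\le C_0'\,E_{\RR^n\setminus\Omega}(v,T)$ is equivalent to the assertion that the fraction of energy remaining inside $\Omega$ at time $T$ is bounded away from $1$. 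This is precisely the Stefanov--Uhlmann time-reversal estimate under the complete-data hypotheses of Remark \ref{rmk:stab_comp_data} (non-trapping sound-speed metric, strictly convex $\partial\Omega$, and $T>T_1(\Omega)$), established in \cite{StUh2009} and used in \cite{AP2018}; I would cite it as the unattenuated observability inequality, noting that the constant $C_0'$ depends only on the geometry and not on $a$.

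For step (ii), recall from the proof of Lemma \ref{lemma:observation_maps} that $w$ solves \eqref{eq:w}, namely $\square_{c,\alpha}w=a\,\partial_t^\alpha v$ with $(w,\partial_t w)|_{t=0}=(0,\tfrac{a}{\Gamma(2-\alpha)}u_0)$. Applying the well-posedness estimate of Theorem \ref{thm:regularity} to this system and keeping the $a$-dependence explicit gives $\|(w,\partial_t w)\|_{L^\infty([0,T];\mathcal{H}^1_0(\Omega))}\le C\|a\|_{C(\RR^n)}\big(\|u_0\|_{L^2(\Omega)}+\|\partial_t^\alpha v\|_{L^1([0,T];L^2(\RR^n))}\big)$, and the fractional-regularity bound of Theorem \ref{thm:reg_frac} controls the last term by $\|u_0\|_{H^\alpha(\RR^n)}\le\|u_0\|_{H^1_0(\Omega)}$. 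By finite propagation speed (Theorem \ref{thm:energy_frac_wave}) the fields $u,v,w$ are supported in a fixed ball at time $T$, so $E_{\RR^n\setminus\Omega}(w,T)\le E_{\RR^n}(w,T)\le C\|a\|_{C(\RR^n)}^2\,\|u_0\|_{H^1_0(\Omega)}^2$, with $C$ uniform over $a$ in a small ball of $C_c(\Omega)$ since the well-posedness constants of Theorem \ref{thm:regularity} are continuous in $a$. Combining (i) and (ii) gives $\|u_0\|_{H^1_0(\Omega)}^2\le 2C_0'\,E_{\RR^n\setminus\Omega}(u,T)+2C_0'C\|a\|_{C(\RR^n)}^2\,\|u_0\|_{H^1_0(\Omega)}^2$; choosing $\|a\|_{C(\RR^n)}$ small enough that $2C_0'C\|a\|_{C(\RR^n)}^2\le\tfrac12$, I absorb the last term on the left and obtain the claim with $C_0=4C_0'$, uniform on a small ball of damping coefficients.

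The main obstacle is step (i): the unattenuated exterior-energy observability is the genuinely hard, geometry-dependent ingredient, resting on unique continuation and the propagation of singularities through the non-trapping, strictly convex configuration. The remaining points are comparatively routine but require care: one must make the $a$-dependence of the well-posedness constant in Theorem \ref{thm:regularity} explicit and uniform for small $a$ so that the absorbed term is genuinely controllable, and one must justify comparing energies over $\RR^n$ with those over a large Dirichlet ball, which is legitimate precisely because of the finite-propagation-speed property of Theorem \ref{thm:energy_frac_wave}.
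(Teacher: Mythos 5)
Your proposal is correct and follows essentially the same route as the paper: decompose $w=v-u$, invoke the unattenuated exterior-energy observability inequality for $v$ (which the paper cites from \cite{Palacios2016} rather than \cite{StUh2009}), bound $E_{\RR^n\setminus\Omega}(w,T)\lesssim \|a\|^2_{C(\RR^n)}\|u_0\|^2_{H^1_0(\Omega)}$ via Theorem \ref{thm:reg_frac} together with well-posedness, and absorb the error term for small $\|a\|$. The only differences are bookkeeping (explicit Minkowski constants, phrasing the $w$-estimate through Theorem \ref{thm:regularity} instead of the paper's solution-operator integral form), not a different argument.
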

\begin{proof} 
    Let $v$ be the solution to the unattenuated problem \eqref{unatt_PAT} with initial condition $(u_0,0)$ (do not confuse it with the function $v$ above, solution to a time-reversed system). We know the exterior observability inequality holds for $v$ as it was proven in \cite{Palacios2016} thus, by letting $w=v-u$, we get
    \[
    \|u_0\|^2_{H^1_0(\Omega)}\leq C E_{\RR^n\setminus\Omega}(v,T)\leq CE_{\RR^n\setminus\Omega}(u,T) + CE_{\RR^n\setminus\Omega}(w,T).
    \]
    Recalling that $w$ solves the system \eqref{eq:w}, we can recast it in integral form as
    \[
    (w(t),\partial_tw(t)) = \VV(t)[0,-\textstyle\frac{au_0}{\Gamma(2-\alpha)},a\partial^\alpha_tv]
    \]
    with $\VV$ the solution operator for the inhomogeneous wave equation, with null Dirichlet conditions on a sufficiently large domain $\Omega'\supset \Omega$ (see \eqref{def:sol_op_hom}-\eqref{def:sol_op_inhom}) depending on the final time $T>0$. It is clear that the map $a\mapsto (w(t),\partial_tw(t))$ is continuous from $C(\RR^n)$ to $H^1(\RR^n)\times L^2(\RR^n)$, and moreover, by the inequalities proven in Theorem \ref{thm:reg_frac} we have
    \[
    \|(w(T),\partial_tw(T))\|_{H^1(\RR^n)\times L^2(\RR^n)}\leq C\|a\|_{C(\RR^n)}\|u_0\|_{H^1_0(\Omega)}.
    \]
    In consequence, we obtain the inequality
    \[
    \|u_0\|^2_{H^1_0(\Omega)}\leq CE_{\RR^n\setminus\Omega}(u,T) + C\|a\|^2_{C(\RR^n)}\|u_0\|^2_{H^1_0(\Omega)}.
    \]
    By restricting the damping coefficient to a ball in $C_c(\Omega)$ centered at zero and of sufficiently small radius $\delta_0>0$, we can absorb the last term with the left-hand side and conclude that
    \[
    \|u_0\|^2_{H^1_0(\Omega)}\leq C_0E_{\RR^n\setminus\Omega}(u,T),\quad \forall  u_0\in H^1_0(\Omega),
    \]
    for a constant $C_0>0$, uniform with respect to the damping coefficients within the ball.
    \end{proof}

Bringing \eqref{ineq:K1} and the previous lemma, we deduce there is $\delta_0>0$ such that for any damping coefficient $a\in C^\infty_c(\Omega)$ with $\|a\|_{C(\RR^n)}\leq \delta_0$, the next inequality holds 
\[
    \|Ku_0\|^2_{H^1_0(\Omega)}\leq \left(1+\left(\textstyle\frac{C_P\|a\|_{C(\RR^n)}}{\Gamma(2-\alpha)}\right)^2-C_0^{-1}\right)\|u_0\|^2_{H^1_0(\Omega)},\quad \forall u_0\in H^1_0(\Omega),
\]
with $C_0>0$ the constant from Lemma \ref{lemma:obs_ineq}. It is clear then that by restricting the previous to damping coefficients $\|a\|_{C(\RR^n)}<\delta$, such that $C_0^{-1/2}>\frac{C_P\delta}{\Gamma(2-\alpha)}$, we get that $K$ is a strict contraction in $H^1_0(\Omega)$. We have proven the following theorem.
\begin{theorem}\label{thm:reconstruction}
    Let $(\Omega,c^{-2}dx)$ be non-trapping and $\partial\Omega$ strictly convex. Assume there is an admissible foliation $\Sigma(\Omega,\partial\Omega)$, and the observation time $T>0$ is such that $T>\max\{T_0(\Omega,\Sigma),T_1(\Omega)\}$ (see Remark \ref{rmk:stab_comp_data}). Then, there is $\delta>0$ so that for any damping coefficient $a\in C^\infty_c(\Omega)$ with $\|a\|_{C(\RR^n)}<\delta$,
    the error operator $K = \text{\rm Id} - \Pi_1A_\alpha\Lambda_\alpha$  is a strict contraction in $H_0^1(\Omega)$ and we have the following Neumann series reconstruction formula for the fractionally attenuated photoacoustic problem:
    \[
    u_0 = \sum^\infty_{m=0}K^mA_\alpha h,\quad h = \Lambda_\alpha u_0,\quad\forall u_0\in H_0^1(\Omega).
    \]
\end{theorem}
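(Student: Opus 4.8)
The plan is to reduce the entire statement to showing that the error operator $K = \mathrm{Id} - \Pi_1 A_\alpha\Lambda_\alpha$ is a strict contraction on $H^1_0(\Omega)$; once this is in hand, the Neumann series is immediate. Writing $h = \Lambda_\alpha u_0$ and recalling $Ku_0 = u_0 - \Pi_1 A_\alpha h$, a bound $\|K\|_{\mathcal{B}(H^1_0(\Omega))} < 1$ makes $\mathrm{Id}-K = \Pi_1 A_\alpha\Lambda_\alpha$ invertible with $(\mathrm{Id}-K)^{-1} = \sum_{m\geq 0}K^m$ convergent in operator norm, so that $u_0 = (\mathrm{Id}-K)^{-1}\Pi_1 A_\alpha h = \sum_{m\geq 0}K^m \Pi_1 A_\alpha h$, which is exactly the reconstruction formula. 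Thus everything hinges on a single quantitative contraction estimate for $K$.

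First I would set up the energy balance for the residual $w = u - v$, where $u$ solves the attenuated forward problem \eqref{frac_PAT} with datum $u_0$ and $v$ is the solution of the time-reversed system whose trace at $t=0$ produces $A_\alpha h$. Using the dissipation identity of Theorem \ref{thm:energy_frac_wave}, the adjoint relation of Lemma \ref{lemma:adjoint} (which transforms the forward Caputo derivative $\partial_t^\alpha$ into the reversed fractional integral $I^{1-\alpha}_T$), and the positivity of the fractional kernel from Lemma \ref{lemma:positivity}, the awkward cross terms cancel and one is led to the one-sided estimate \eqref{ineq:K1}, in which the crucial feature is the \emph{negative} exterior-energy term $-E_{\RR^n\setminus\Omega}(u,T)$ on the right. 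This is what opens the door to a contraction: the prefactor in front of $\|u_0\|^2_{H^1_0(\Omega)}$ exceeds $1$ only by a term quadratic in $\|a\|_{C}$, and the exterior energy can be used to beat it down.

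Next I would establish the exterior observability inequality of Lemma \ref{lemma:obs_ineq}, namely $\|u_0\|^2_{H^1_0(\Omega)} \leq C_0\,E_{\RR^n\setminus\Omega}(u,T)$ with $C_0$ uniform for small damping. This is a perturbation off the unattenuated observability inequality of \cite{Palacios2016}: writing the conservative residual $w = v - u$ with $v$ the solution of \eqref{unatt_PAT}, the forcing $a\partial_t^\alpha v$ is controlled through the sharp FIO regularity estimates of Theorem \ref{thm:reg_frac}, giving $\|w(T)\| \lesssim \|a\|_{C}\|u_0\|$, absorbable once $\|a\|_C$ is small. Substituting the resulting lower bound on $E_{\RR^n\setminus\Omega}(u,T)$ into \eqref{ineq:K1} yields $\|Ku_0\|^2_{H^1_0(\Omega)} \leq \bigl(1 + (C_P\|a\|_C/\Gamma(2-\alpha))^2 - C_0^{-1}\bigr)\|u_0\|^2_{H^1_0(\Omega)}$, and choosing $\delta$ with $C_0^{-1/2} > C_P\delta/\Gamma(2-\alpha)$ forces the prefactor strictly below $1$.

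The main obstacle is the \emph{uniform}-in-$a$ control that the argument demands: both the contraction constant and the observability constant $C_0$ must stay uniform as $a$ ranges over a small ball in $C_c(\Omega)$, and this is precisely where the continuous dependence of $\partial_t^\alpha v$ on $\|u_0\|_{H^\alpha}$ and $\|u_0\|_{H^1}$ from Theorem \ref{thm:reg_frac}---resting in turn on the parametrix construction of Lemma \ref{lemma:comp_FIO_Caputo}---is indispensable, since a crude energy bound would not furnish a constant independent of the damping. The remaining verifications, namely that $A_\alpha$ and $\Lambda_\alpha$ map into the spaces making $K$ a genuine bounded operator on $H^1_0(\Omega)$, are routine consequences of the well-posedness in Theorem \ref{thm:regularity} and the trace estimates already exploited in Lemma \ref{lemma:observation_maps}.
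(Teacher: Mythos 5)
Your proposal follows essentially the same route as the paper's own proof: the energy identity for the residual $w=u-v$ via the dissipation of Theorem \ref{thm:energy_frac_wave}, the adjoint relation of Lemma \ref{lemma:adjoint}, and the kernel positivity of Lemma \ref{lemma:positivity} to reach \eqref{ineq:K1}; then the exterior observability inequality of Lemma \ref{lemma:obs_ineq}, proved as a small-damping perturbation of the unattenuated estimate of \cite{Palacios2016} with the forcing controlled by Theorem \ref{thm:reg_frac}; and finally the combination of the two bounds to make $K$ a strict contraction, with the same smallness condition $C_0^{-1/2}>C_P\delta/\Gamma(2-\alpha)$. Your write-up is correct (and your $\sum_{m\geq 0}K^m\Pi_1 A_\alpha h$ is in fact the more precise form of the paper's series), so there is nothing to add.
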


\section*{Acknowledgement}
The work of B. Palacios was partially supported by Agencia Nacional de Investigaci\'on y Desarrollo (ANID), Grant FONDECYT Iniciaci\'on N$^\circ$1251207. The work of S. Acosta was partially supported by NIH award 1R15EB035359-01A1.
S. Acosta would like to thank the support and research-oriented environment provided by Texas Children's Hospital.

\begin{appendix}
    \section{Interpolation and Sobolev Spaces}\label{Appx:Sobolev}
    
    We mainly follow  \cite{LM_v1} (Chapters 1.7, 1.9, 1.11 and 1.15) and \cite{Ta2007}, but other useful references are \cite{Bergh2012, McL2000}. With regard to intermediate derivatives, we follow \cite{LM_v1} Chapter 1.2. \\

{\noindent \em Real interpolation.} Let's consider two separable Hilbert spaces $X\subset Y$ with dense and continuous injection. The $K$-functional associated with the pair $(X,Y)$ is defined as
\[
K(\rho,u;X,Y):= \inf_{u_0+u_1=u}\left(\|u_0\|_X^2+\rho^2\|u_1\|_Y^2\right)^{1/2},\quad u_0\in X,\; u_1\in Y,
\]
for every $u\in X+Y$ and for all $\rho>0$. We define the interpolation space $[X,Y]_{\theta}$ as
\[
[X,Y]_{\theta}= \{u\;|\; u\in Y,\; \rho^{-(\theta+1/2)}K(\rho,u;X,Y)\in L^2(0,\infty)\}
\]
with norm
\begin{equation}\label{def:int_norm}
u\mapsto \left(\|u\|_X^2 + \int^\infty_0\rho^{-(2\theta+1)}K(\rho,u;X,Y)^2d\rho\right)^{1/2}.
\end{equation}

\noindent{\em Sobolev Spaces.} For any $s\in\RR$, the Sobolev spaces $H^s(\RR^n)$ are defined via the Fourier transform as follows:
\[
H^s(\RR^n)=\{u\in\mathcal{S}'(\RR^n): \|u\|^2_{H^s}:=\int (1+|\xi|^2)^s|\hat{u}|^2d\xi<\infty\}
\]
with $\mathcal{S}'(\RR^n)$ the Fr\'echet space of tempered distributions. 

Given a bounded open subset $\Omega\subset\RR^n$ with smooth boundary and $m$ a positive integer, we define
\[
H^m(\Omega) = \{u|_{\Omega}:\in H^m(\RR^n)\}
\]
with the norm
\[
\||u|\|_{H^m(\Omega)} := \inf\{\|U\|_{H^m}:u=U|_{\Omega},\; U\in H^m(\RR^n)\}.
\]
It can be characterized as the space of functions $u\in L^2(\Omega)$ with derivatives $\partial^{\alpha}_xu\in L^2(\Omega)$ for all multi-index $|\alpha|\leq m$, and with equivalent norm
\[
\|u\|_{H^m(\Omega)} := \left(\sum_{|\alpha|\leq m}\|\partial^{\alpha}_xu\|^2_{L^2(\Omega)} \right)^{1/2}.
\]

For any $s\in (0,1)$ we define the fractional order Sobolev spaces $H^s(\Omega)$ as the following interpolation space (for instance, following the K-method). Writing $H^0(\Omega)=L^2(\Omega)$, then
\[
H^s(\Omega) :=[H^1(\Omega),H^0(\Omega)]_{\theta},\quad \theta=1-s, 
\]
with its respective interpolation norms. They can be characterized as 
\[
H^s(\Omega) = \{u|_{\Omega}:u\in H^s(\RR^n)\}
\]
with the equivalent norm
\[
\||u|\|_{H^s(\Omega)} := \inf\{\|U\|_{H^s}:u=U|_{\Omega},\; U\in H^s(\RR^n)\}.
\]
We will also need the spaces
\[
H^{1+s}(\Omega):= [H^2(\Omega),H^0(\Omega)]_{\frac{1-s}{2}}=[H^2(\Omega),H^1(\Omega)]_{1-s},\quad s\in(0,1).
\] 

The subspaces $H^s_0(\Omega)$ are defined as the completions of $C^\infty_c(\Omega)$ under the $H^s(\Omega)$-norms. For $0\leq s\leq 1/2$, $H^s_0(\Omega)= H^s(\Omega)$, while for $1/2<s\leq 1$, $H^s_0(\Omega)$ is strictly contained in $H^s(\Omega)$ and can be characterized as
\[
H^s_0(\Omega) = \{u\in H^s(\Omega):u|_{\partial\Omega}=0\},\quad 1/2<s\leq 1.
\]
It is also possible to obtain them as interpolation spaces, except for the case $s=1/2$. In fact,
\[
[H^1_0(\Omega),L^2(\Omega)]_{1-s}=H^s_0(\Omega) ,\quad\text{for } s\in(0,1),\; s\neq 1/2,
\]
with equivalent norms, 
while for $s=1/2$ one has
\[
[H^1_0(\Omega),L^2(\Omega)]_{1/2} =: H^{1/2}_{00}(\Omega),
\]
which is a strict subset of $H^{1/2}_0(\Omega)$ with continuous inclusion.

For a bounded open set with smooth boundary, $\Omega\subset\RR^n$, and $T>0$, the space $H^1([0,T]\times\partial\Omega)$ is defined as
\[
\{u\in L^2([0,T];H^1(\partial\Omega)):\partial_tu\in L^2([0,T];L^2(\partial\Omega))\},
\]
where the trace spaces $H^s(\partial\Omega)$ for any $s\in\RR$ are defined via finite partitions of unity (due to the compactness of $\partial\Omega$).

\bigskip

\noindent{\em Interpolation spaces of intermediate derivatives.} Set $A = c^2(x)\Delta$ ---with $c(x)$ a smooth function bounded from below by a positive constant--- endowed with null Dirichlet boundary conditions at $\partial\Omega$, thus, its domain is
\[
D(A) := H^2(\Omega)\cap H^1_0(\Omega).
\]
Using its spectral decomposition, we define for $\theta\in(0,1)$ the operators $A^{\theta}f = \sum_{j\geq 0}\lambda_j^\theta(f,\phi_j)_{L^2}$ where $\{\lambda_j\}$ corresponds to the spectrum of $A$, and $\{\phi_j\}$ respective eigenfunctions forming an orthonormal basis of $L^2(\Omega)$. We will use the notation $D(A^0) := L^2(\Omega)$.

\begin{remark}\label{rmk:general_XY_spaces}
    For two separable Hilbert spaces $X\subset Y$, with $X$ dense and continuously injected in $Y$, there is always a self-adjoint and positive operator $A$ in $Y$ with domain $X$, hence $D(A^0)=Y$ and $D(A)=X$.
\end{remark}

Instead of using the K-method, it is possible to alternatively define  interpolation spaces of intermediate derivatives as
\[
[D(A),D(A^0)]_\theta := D(A^{1-\theta}),
\]
with $D(A^{1-\theta})$ corresponding to the domain of $A^{1-\theta}$, this is,
\[
D(A^{1-\theta}) := \{f\in L^2(\Omega):A^{1-\theta} f\in L^2(\Omega)\},
\]
and they are endowed with the norm 
\[
f\mapsto \left(\|f\|^2_{D(A^0)}+\|A^{1-\theta}f\|_{D(A^0)}^2\right)^{1/2},
\]
which turns out to be equivalent to the one defined in \eqref{def:int_norm}. 
In particular, we have that $[D(A),D(A^0)]_{1/2}=D(A^{1/2})$ is topologically equivalent to $H^1_0(\Omega)$. 

For evolution equations one might consider energy spaces of initial conditions given by
\[
\mathcal{H}^1_0(\Omega):= D(A^{1/2})\times D(A^0)\quad\text{and}\quad \mathcal{H}^2_0(\Omega):= D(A)\times D(A^{1/2}).
\]
An application of the Reiteration Theorem (see \cite[Chapter 1, Theorem 6.1]{LM_v1}) gives, for $s\in(0,1)$, that their interpolation spaces are
\[
\begin{aligned}
\mathcal{H}^{1+s}_0(\Omega)&:=[\mathcal{H}^2_0(\Omega),\mathcal{H}^1_0(\Omega)]_{1-s} \\
&= [D(A),D(A^0)]_{(1-s)/2}\times[D(A),D(A^0)]_{1-s/2} \\
&= D(A^{(1+s)/2})\times D(A^{s/2}).
\end{aligned}
\]
Recalling the definition of fractional Sobolev spaces as interpolation spaces, namely
\[
H^{s}(\Omega)=[H^2(\Omega),H^0(\Omega)]_\theta,\quad s=2(1-\theta),
\]
because $D(A)$ is continuously embedded in $H^2(\Omega)$ and $D(A^0) = H^0(\Omega)$, by the interpolation Theorem (see \cite[Chapter 1, Theorem 5.1]{LM_v1}) we find that $\mathcal{H}^{1+s}_0(\Omega)$ is continuously embedded in $H^{1+s}(\Omega)\times H^{s}(\Omega)$.\\

    \section{Integration on Banach spaces}\label{Appx:int_Banach}
    We present a brief introduction to the Bochner integral for vector-valued functions. 
    More details can be found in \cite{DU1977}

    Let $X$ be an abstract Banach space and $(\Omega,\Sigma,\mu)$ a finite measure space. A vector-valued function $F:\Omega\to X$ is said to be {\em (strongly) $\mu$-measurable} if there exists a sequence of {\em simple functions}, this is, functions of the form $f_n(t) = \sum_{i=1}^n\chi_{E_i}(t)u_i$, with $u_i\in X$, $E_i\in\Sigma$ measurable subsets of $\Omega$, and where $\chi_{E_i}(t)$ stands for the characteristic function in $E_i$, such that
    \[
    \lim_{n\to\infty}\|f_n(t)-f(t)\|_X\ = 0\quad \text{for almost every $t\in \Omega$.}
    \]
    A $\mu$-measurable function $f:\Omega\to X$ is said to be {\em Bochner integrable} if there exists a sequence of simple functions $(f_n)$ such that
    \[
    \lim_{n\to\infty}\int_\Omega\|f_n-f\|_Xd\mu = 0,
    \]
    in which case we define the integral of $f$ over some $E\in\Sigma$ as
    \[
    \int_E fd\mu := \lim_{n\to\infty} \int_E f_nd\mu,
    \]
    where for $f_n=\sum_{i=1}^n\chi_{E_i}(t)u_i$, its integral is given by
    \[
    \int_\Omega f_nd\mu = \sum_{i=1}^n\mu(E_i\cap E)u_i.
    \]
    We have the following characterization of integrability.
    \begin{theorem}
    Given a $\mu$-measurable function $f$, we have that $f$ is Bochner integrable if and only if $\int_\Omega\|f\|_Xd\mu<\infty$.
    \end{theorem}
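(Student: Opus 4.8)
The plan is to prove the two implications separately, after first noting that the right-hand side is meaningful. Since $f$ is strongly $\mu$-measurable there is a sequence of simple functions $g_n$ with $g_n(t)\to f(t)$ in $X$ for almost every $t$, and continuity of the norm gives $\|g_n(t)\|_X\to\|f(t)\|_X$ a.e.; as each $\|g_n(\cdot)\|_X$ is a nonnegative simple scalar function, its a.e. pointwise limit $t\mapsto\|f(t)\|_X$ is $\mu$-measurable, so $\int_\Omega\|f\|_X\,d\mu$ is well defined in $[0,\infty]$.

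For the forward implication, suppose $f$ is Bochner integrable, so there exist simple functions $f_n$ with $\int_\Omega\|f_n-f\|_X\,d\mu\to 0$. Fixing $n$ large enough that this quantity is finite, the triangle inequality $\|f(t)\|_X\le\|f(t)-f_n(t)\|_X+\|f_n(t)\|_X$ integrates to $\int_\Omega\|f\|_X\,d\mu\le\int_\Omega\|f-f_n\|_X\,d\mu+\int_\Omega\|f_n\|_X\,d\mu$. The last integral is finite because $f_n$ takes finitely many values on measurable sets and $\mu$ is finite; hence $\int_\Omega\|f\|_X\,d\mu<\infty$.

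The converse is where the only real work lies. Assuming $\int_\Omega\|f\|_X\,d\mu<\infty$, I would upgrade the approximating simple functions $g_n$ coming from strong measurability to ones dominated by an integrable scalar function, by truncation. Define $f_n(t):=g_n(t)$ on the measurable set $\{t:\|g_n(t)\|_X\le\|f(t)\|_X+1\}$ and $f_n(t):=0$ elsewhere; each $f_n$ is again simple, being the restriction of a simple function to a measurable set. At every point where $g_n(t)\to f(t)$ one has $\|g_n(t)\|_X\to\|f(t)\|_X$, so eventually $\|g_n(t)\|_X\le\|f(t)\|_X+1$ and therefore $f_n(t)=g_n(t)\to f(t)$, giving $\|f_n(t)-f(t)\|_X\to 0$ a.e. Moreover $\|f_n(t)\|_X\le\|f(t)\|_X+1$ at every $t$ (trivially where $f_n=0$), whence the uniform domination $\|f_n(t)-f(t)\|_X\le 2\|f(t)\|_X+1$. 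Since $2\|f\|_X+1$ is $\mu$-integrable (using $\int_\Omega\|f\|_X\,d\mu<\infty$ and $\mu(\Omega)<\infty$), the scalar dominated convergence theorem yields $\int_\Omega\|f_n-f\|_X\,d\mu\to 0$, so $f$ is Bochner integrable.

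The main obstacle is precisely this converse: the simple functions supplied by strong measurability converge pointwise but need not be dominated, so one cannot pass the norm difference through the integral directly. The truncation step is what repairs this, reducing the vector-valued statement to the classical scalar dominated convergence theorem. For completeness one should also check that the integral produced is well defined, i.e. that $\int_E f\,d\mu:=\lim_n\int_E f_n\,d\mu$ exists independently of the approximating sequence; this follows from the estimate $\bigl\|\int_E f_n\,d\mu-\int_E f_m\,d\mu\bigr\|_X\le\int_\Omega\|f_n-f_m\|_X\,d\mu$ together with completeness of $X$.
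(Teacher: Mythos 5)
Your proof is correct. Note that the paper does not actually prove this theorem: it appears in Appendix B as a stated, standard characterization of Bochner integrability, with the reader referred to Diestel and Uhl's \emph{Vector Measures} for details. Your argument is essentially the classical proof found in that reference: the forward implication via the triangle inequality and finiteness of $\int_\Omega\|f_n\|_X\,d\mu$ for simple $f_n$, and the converse by truncating the a.e.\ approximating simple functions so that they are dominated by an integrable scalar function, then applying the scalar dominated convergence theorem. The only cosmetic difference is the form of the truncation: the standard proof cuts off on the set $\{\|g_n(t)\|_X\le 2\|f(t)\|_X\}$, which gives the dominating function $3\|f\|_X$ and works even when $\mu(\Omega)=\infty$, whereas your additive cutoff $\{\|g_n(t)\|_X\le\|f(t)\|_X+1\}$ produces the dominating function $2\|f\|_X+1$ and therefore needs $\mu(\Omega)<\infty$ --- which the paper's setting assumes, so your argument is complete as written. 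Your closing remark on well-definedness of $\int_E f\,d\mu$ (independence of the approximating sequence, via the Cauchy estimate and completeness of $X$) is a worthwhile addition that the paper's definition implicitly relies on.
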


    Some properties of the Bochner integral that will be useful for us are the following.
    \begin{theorem}\label{prop:Bochner}
        If $f$ is $\mu$-Bochner integrable, then
        \begin{itemize}
        \item[(i)] $\lim_{\mu(E)\to 0}\int_Efd\mu = 0;$
        \item[(ii)] $\left\|\int_Efd\mu\right\|_X\leq \int_E \left\|f\right\|_Xd\mu$ for all $E\in\Sigma$;
        \item[(iii)] if $(E_n)$ is a sequence of pairwise disjoint members of $\Sigma$ and $E=\bigcup^\infty_{n=1}E_n$, then
        \[
        \int_{E}fd\mu = \sum^\infty_{n=1}\int_{E_n}fd\mu,
        \]
        where the sum on the right is absolutely convergent.
        \end{itemize}
    \end{theorem}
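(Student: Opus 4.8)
The plan is to establish part (ii) first, since it is the master inequality from which the other two statements follow by combining it with standard scalar measure theory applied to the nonnegative integrable function $g(t):=\|f(t)\|_X$. Recall that Bochner integrability of $f$ furnishes a sequence of simple functions $(f_n)$ with $\int_\Omega\|f_n-f\|_X\,d\mu\to 0$ and, by the very definition of the integral, $\int_E f\,d\mu=\lim_n\int_E f_n\,d\mu$ for each $E\in\Sigma$; moreover the characterization theorem quoted above guarantees $g\in L^1(\mu)$.

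For (ii), I would first pass to the canonical representation $f_n=\sum_i\chi_{E_i}u_i$ with the $E_i$ pairwise disjoint, so that $\|f_n(t)\|_X=\sum_i\chi_{E_i}(t)\|u_i\|_X$. The triangle inequality in $X$ then gives, for every $E\in\Sigma$,
\[
\Big\|\int_E f_n\,d\mu\Big\|_X=\Big\|\sum_i\mu(E_i\cap E)\,u_i\Big\|_X\le\sum_i\mu(E_i\cap E)\,\|u_i\|_X=\int_E\|f_n\|_X\,d\mu.
\]
To pass to the limit I would use two facts: that $\int_E f_n\,d\mu\to\int_E f\,d\mu$ in $X$, because $\|\int_E(f_n-f)\,d\mu\|_X\le\int_\Omega\|f_n-f\|_X\,d\mu\to 0$, so by continuity of the norm the left-hand sides converge to $\|\int_E f\,d\mu\|_X$; and that $\big|\int_E\|f_n\|_X\,d\mu-\int_E g\,d\mu\big|\le\int_\Omega|\,\|f_n\|_X-\|f\|_X\,|\,d\mu\le\int_\Omega\|f_n-f\|_X\,d\mu\to 0$ by the reverse triangle inequality, so the right-hand sides converge to $\int_E g\,d\mu$. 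Taking limits yields (ii).

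Part (i) is then immediate: by (ii) we have $\|\int_E f\,d\mu\|_X\le\int_E g\,d\mu$, and since $g\in L^1(\mu)$ is a nonnegative scalar function, the absolute continuity of the scalar Lebesgue integral gives $\int_E g\,d\mu\to 0$ as $\mu(E)\to 0$. For part (iii) I would first deduce absolute convergence of the series: by (ii) together with countable additivity of the scalar measure $A\mapsto\int_A g\,d\mu$,
\[
\sum_{n=1}^\infty\Big\|\int_{E_n}f\,d\mu\Big\|_X\le\sum_{n=1}^\infty\int_{E_n}g\,d\mu=\int_E g\,d\mu<\infty,
\]
so the series converges in the Banach space $X$. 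To identify its sum, set $F_N=\bigcup_{n=1}^N E_n$; finite additivity of the Bochner integral (clear for simple functions and preserved under the $L^1$-limit) gives $\int_{F_N}f\,d\mu=\sum_{n=1}^N\int_{E_n}f\,d\mu$, while
\[
\Big\|\int_E f\,d\mu-\int_{F_N}f\,d\mu\Big\|_X=\Big\|\int_{E\setminus F_N}f\,d\mu\Big\|_X\le\int_{E\setminus F_N}g\,d\mu,
\]
and $\mu(E\setminus F_N)=\sum_{n>N}\mu(E_n)\to 0$ since $\mu(E)<\infty$; hence the right-hand side tends to $0$ by (i), which proves (iii).

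None of these steps is deep; the entire content lies in transferring scalar estimates through the norm via (ii). The only points demanding attention are the reduction to a disjoint canonical representation in (ii) and the justification of finite additivity in (iii), both routine consequences of linearity for simple functions together with the $L^1$-convergence of the approximants. Thus the main obstacle here is purely bookkeeping: ensuring that a single approximating sequence $(f_n)$ simultaneously controls $\int_E f_n\,d\mu$ in $X$ and $\int_E\|f_n\|_X\,d\mu$ in $\RR$, which is exactly what $\int_\Omega\|f_n-f\|_X\,d\mu\to 0$ provides.
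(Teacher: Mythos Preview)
Your argument is correct and is essentially the standard textbook proof of these properties (via the master inequality (ii) and scalar absolute continuity). Note, however, that the paper does not actually prove this theorem: it is stated in Appendix~\ref{Appx:int_Banach} as a known result, with a reference to \cite{DU1977} for details, so there is no ``paper's own proof'' to compare against. One minor quibble: in the last step of (iii) you invoke (i) to conclude $\int_{E\setminus F_N}g\,d\mu\to 0$, but (i) concerns the $X$-valued integral of $f$; what you are really using there is the scalar absolute continuity of the Lebesgue integral of $g\in L^1(\mu)$, which you already established in proving (i).
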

    We denote by $\mathcal{B}(X)$ the Banach space of bounded linear operators from $X$ to itself. Take $R\in \mathcal{B}(X)$. By definition of integrability, it is clear that for $f:\Omega\to X$ integrable, $Rf$ is integrable as well and moreover
    \[
    R\int_Efd\mu = \int_ERfd\mu.
    \]
    In fact, the same property holds for $R$ a closed linear operator from $X$ to some other Banach space $Y$, provided $Rf$ is integrable. See \cite{DU1977} for a proof of this. A particularly useful example is the closed operator $\partial_x:H^1_0(\Omega)\to L^2(\Omega)$.

    For the sake of this paper, we are interested in $\Omega = [0,T]\subset\RR$ and $\mu$ the Lebesgue measure in $\RR$. Let's consider also a continuous one-parameter family linear operators $R(s):X\to X$ for $s\in[0,T]$ (e.g. the semigroup associated to the wave equation). By continuity of $R(t)$ and the definition of measurability, $R(t)x$ is $\mu$-measurable for any $x\in X$, and if $f:[0,T]\to X$ is measurable then $R(t)f(t)$ is as well. 
    
    To preserve integrability we need, for instance, uniform boundedness, this is, we assume there is $M>0$ such that their operator norms satisfy $\|R(s)\|<M$ for all $s$. Then, $R(t)f(t)$ is also Bochner integrable (in fact, we only need $\|R(t)\|\|f\|_X$ to be integrable in $[0,T]$).

    Let $\UU(t)$ be a uniformly bounded strongly continuous semigroup in $X$ (for instance, take the semigroup associated to the wave equation in a domain $\Omega$ with null Dirichlet condition and $X=H^1_0(\Omega)\times L^2(\Omega)$). For any $\FF(t)\in L^1([0,T];X)$ (i.e. Bochner integrable) the following integral is well defined for every $t\in[0,T]$ and
    \[
    t\mapsto w(t)=\int^t_0\UU(t-s)\FF(s)ds \in C([0,T];X).
    \]

    Let's consider a bounded subdomain $\omega$, strictly contained in $\Omega$ and with smooth boundary $\partial\omega$. We call $R$ the trace operator on $\partial\omega$ which is known to be continuous from $H^1(\RR^n)$ to $H^{1/2}(\partial\omega)$.  
    Denoting by $U_i(t) = \pi_i\UU(t)$, $i=1,2$, the projection to the first and second component respectively, and taking $\FF(t) = [0,F(t)]$ with $F(t)\in L^1([0,T];L^2(\Omega))$ and $\supp(F)\subset\omega$, we have for each $t\in[0,T]$,
    \[
    Rw(t) = \int^t_0RU_1(t-s)\FF(s)ds,
    \]
    and moreover $Rw(t)\in C([0,T];H^{1/2}(\partial\Omega))$ (we also used that the projection operator $\pi_1:X\to H^1_0(\Omega)$ is continuous). Via a Fourier integral representation of solutions to the wave equations, it can be proven that $RU_1(t-s)\FF(s)\in H^1((0,T)\times\partial\omega)$. See appendix \ref{Appx:Sobolev} for a definition of this space.
    
    Let's notice that for any $t\in(0,T)$ and $h$ sufficiently small,
    \[
    \begin{aligned}
        &\frac{1}{h}(Rw(t+h)-Rw(t)) \\
        &= \frac{1}{h}\int^{t+h}_0RU_1(t+h-s)\FF(s)ds-\frac{1}{h}\int^t_0RU_1(t-s)\FF(s)ds\\
        &= \frac{1}{h}\int^{t+h}_tRU_1(t-s)\FF(s)ds+\int^{t}_0R\frac{1}{h}\big(U_1(t+h-s)\FF(s)-U_1(t-s)\FF(s)\big)ds.
    \end{aligned}
    \]
    Taking the limit as $h\to 0$, by \cite[Theorem 9 in \S II.2]{DU1977}, we obtain
    \[
    \begin{aligned}
    \partial_tRw(t) &= RU_1(0)\FF(t) + \int^t_0RU_2(t-s)\FF(s)ds\\
    &= \int^t_0RU_2(t-s)\FF(s)ds,
    \end{aligned}
    \]
    where we used that $\UU(0)=\text{Id}$ and $\supp(F)=\omega_0\subset\omega$.
    
    We next show that for some $\delta>0$ and $M>0$, the map $F\mapsto Rw$ is continuous from 
    \[
    \mathcal{L}^1_{\delta,M}:= L^1((0,T);L^2(\omega_0))\cap L^\infty((\delta,T);L^2(\omega_0))\to H^1((0,T)\times\partial\omega).
    \]
    Instead of using Lebesgue integration in manifolds we follow a Bochner integral approach. Let's introduce
    \[
    g_1(t) = \int^T_0RU_1(t-s)\FF(s)ds,
    \]
    and notice that $s \mapsto RU_1(\cdot-s)\FF(s)\in H^1((0,T)\times\partial\omega)$ is Bochner integrable in $[0,T]$, hence,
    \[
    \begin{aligned}
    \|g_1\|_{H^1((0,T)\times\partial\omega)}&\leq \int^T_0\|RU_1(\cdot-s)\FF(s)\|_{H^1((0,T)\times\partial\omega)}ds\\
    &\leq C\int^T_0\|\FF(s)\|_{H^1_0(\omega)\times L^2(\omega))}ds\\
    &= C\|\FF\|_{L^1((0,T);H^1_0(\omega)
    \times L^2(\omega))}.
    \end{aligned}
    \]
    It remains to estimate
    \[
    g_2(t) = \int^T_{t}RU_1(t-s)\FF(s)ds.
    \]
    since $Rw = g_1-g_2$. Let's first notice that
    \[
    \begin{aligned}
    \partial_tg_2(t) &= -RU_1(0)\FF(t) + \int^T_tRU_2(t-s)\FF(s)ds = \int^T_tRU_2(t-s)\FF(s)ds.
    \end{aligned}
    \]
    Secondly, we see that thanks to the finite propagation speed property there is $\delta>0$ such that $R\UU(t-s)\FF(s)=0$ for $s\in[t,t+\delta]$, thus
    \[
    g_2(t) = \int^T_{t+\delta}RU_1(t-s)\FF(s)ds\quad\text{and}\quad  \partial_tg_2(t)=\int^T_{t+\delta}RU_2(t-s)\FF(s)ds.
    \]
    We have
    \[
    \begin{aligned}
        \|g_2\|^2_{H^1((0,T)\times\partial\omega)} &=  \|g_2\|^2_{L^2((0,T)\times\partial\omega)} + \|\partial_tg_2\|^2_{L^2((0,T)\times\partial\omega)} + \|\partial_xg_2\|^2_{L^2((0,T)\times\partial\omega)},
    \end{aligned}
    \]
    thus, by Theorem \ref{prop:Bochner} and Jensen's inequality
    \[
    \begin{aligned}
        \|g_2\|^2_{H^1((0,T)\times\partial\omega)} &\lesssim \int^T_0\left(\int^T_{t+\delta}\|RU_1(t-s)\FF(s)\|_{L^2(\partial\omega)}ds\right)^{2}dt\\
        &\quad +  \int^T_0\left(\int^T_{t+\delta}\|RU_2(t-s)\FF(s)\|_{L^2(\partial\omega)}ds\right)^{2}dt\\
        &\quad +  \int^T_0\left(\int^T_{t+\delta}\|\partial_xRU_1(t-s)\FF(s)\|_{L^2(\partial\omega)}ds\right)^{2}dt\\
        &\lesssim \int^T_0\int^T_{t+\delta}\|RU_1(t-s)\FF(s)\|^2_{L^2(\partial\omega)}dsdt\\
        &\quad +  \int^T_0\int^T_{t+\delta}\|RU_2(t-s)\FF(s)\|^2_{L^2(\partial\omega)}dsdt\\
        &\quad +  \int^T_0\int^T_{t+\delta}\|\partial_xRU_1(t-s)\FF(s)\|^2_{L^2(\partial\omega)}dsdt\\
    \end{aligned}
    \]
    By Fubini's theorem and then the change of variables $r=s-t$, we get
    \[
    \begin{aligned}
        \|g_2\|^2_{H^1((0,T)\times\partial\omega)}
        &\lesssim \int^T_{\delta}\int^{s-\delta}_{0}\|RU_1(t-s)\FF(s)\|^2_{H^1(\partial\omega)}dtds\\
        &\quad +  \int^T_{\delta}\int^{s-\delta}_{0}\|RU_2(t-s)\FF(s)\|^2_{L^2(\partial\omega)}dtds\\
        &\lesssim \int^T_{\delta}\int^{s}_{\delta}\|RU_1(-r)\FF(s)\|^2_{H^1(\partial\omega)}drds\\
        &\quad +  \int^T_{\delta}\int^{s}_{\delta}\|RU_2(-r)\FF(s)\|^2_{L^2(\partial\omega)}drds\\
        &\lesssim \int^T_{\delta}\int^{T}_{0}\|RU_1(-r)\FF(s)\|^2_{H^1(\partial\omega)}drds\\
        &\quad +  \int^T_{\delta}\int^{T}_{0}\|RU_2(-r)\FF(s)\|^2_{L^2(\partial\omega)}drds.
    \end{aligned}
    \]
    We now use the time-symmetry of the (unattenuated) wave semigroup for initial conditions of the form $[0,F]$, namely
    \[
    U_1(-t)[0,F] = -U_1(t)[0,F]\quad\text{and}\quad U_2(-t)[0,F] = U_2(t)[0,F],
    \]
    which implies that
    \[
    \begin{aligned}
        \|g_2\|^2_{H^1((0,T)\times\partial\omega)}
        &\lesssim \int^T_{\delta}\int^{T}_{0}\|RU_1(r)\FF(s)\|^2_{H^1(\partial\omega)}drds\\
        &\quad +  \int^T_{\delta}\int^{T}_{0}\|RU_2(r)\FF(s)\|^2_{L^2(\partial\omega)}drds\\
        &= \int^T_{\delta}\|RU_1(\cdot)\FF(s)\|^2_{H^1((0,T)\times\partial\omega)}ds\\
        &\lesssim \|\FF\|^2_{L^2((\delta,T);H^1_0(\omega)\times L^2(\omega))}.
    \end{aligned}
    \]
    We have proven the inequality
    \[
    \|Rw\|_{H^1((0,T)\times\partial\omega)}\lesssim \|\FF\|_{L^1((0,T);H^1_0(\omega)\times L^2(\omega))}+ \|\FF\|_{L^2((\delta,T);H^1_0(\omega)\times L^2(\omega))}.
    \]
    In the particular case of $\FF= (0,F)$ with $F\in \mathcal{L}^1_\delta$, then
    \[
    \|g_1\|_{H^1((0,T)\times\partial\omega)}\lesssim  \|F\|_{L^1((0,T); L^2(\omega))}
    \]
    and
    \[
    \|g_2\|^2_{H^1((0,T)\times\partial\omega)}\lesssim C\|F\|_{L^2((\delta,T);L^2(\omega))}\lesssim C\|F\|_{L^\infty((\delta,T);L^2(\omega))}\|F\|_{L^1((0,T);L^2(\omega))},
    \]
    for a constant depending on the parameter $M$. Consequently, for some other constant $C_\delta>0$ depending on $\delta$ and $M$,
    \[
    \begin{aligned}
    \|Rw\|_{H^1((0,T)\times\partial\omega)}  &\leq \|g_1\|_{H^1((0,T)\times\partial\omega)} +\|g_2\|_{H^1((0,T)\times\partial\omega)} \\
    &\lesssim \left(\|F\|^{1/2}_{L^1((0,T);L^2(\omega))}+\|F\|^{1/2}_{L^\infty((\delta,T);L^2(\omega))}\right)\|F\|^{1/2}_{L^1((0,T);L^2(\omega))}
    \\
    &\lesssim \|F\|_{L^1((0,T);L^2(\omega))}+\|F\|_{L^\infty((\delta,T);L^2(\omega))}.
    \end{aligned}
    \]
One can take $\delta=0$ if we apriori know that $F\in L^\infty((0,T);L^2(\omega))$.

\section{Strict convexity of Riemannian hypersurfaces}\label{Appdx:convexity}
\end{appendix}
We give a brief summary of relevant ideas concerning the strict convexity of Riemannian hypersurfaces, and provide a derivation of the convexity condition in terms of cotangent vectors. For more details, we refer the reader to the book \cite{LeeBook}.
    
Let us recall that a Riemannian hypersurface $(M,g)\subset (\widetilde{M},\widetilde{g})$ ---with the latter the ambient manifold of dimension $n$--- is strictly convex with respect to a unit normal vector field $N$ along $M$, if its scalar second fundamental form satisfies $h(X,X)<0$, for all $X\in TM$. 

The scalar second fundamental form $h$ is the covariant symmetric 2-tensor field defined by
\[
h(X,Y) = \langle \Pi(X,Y),N\rangle
\]
for all smooth vector fields $X,Y\in \mathfrak{X}(M)$, where $\Pi(X,Y)$ stands for the second fundamental form of $M$. Equivalently we might write
\[
\Pi(X,Y)=h(X,Y)N.
\]
By homogeneity of $h$, we have $h(X,X) = |X|_g^2h(\hat{X},\hat{X})$ with $\hat{X}=X/|X|_g\in SM$, the sphere bundle of $M$, where $-h(\hat{X},\hat{X})$ reaches a minimum. Consequently, the condition $h(X,X)<0$ is equivalent to the existence of $\kappa>0$ such that $-h(X,X)\geq \kappa|X|_g$.

Denoting by $\widetilde{\nabla}$ the Levi-Civita connection of $\widetilde{M}$, we have that
\[
\Pi(X,Y) = \left(\widetilde{\nabla}_XY\right)^\perp,
\]
where $^\perp$ stands for the normal projection of $T\widetilde{M}|_M$ onto $NM$, the normal bundle of $M$, and on the right hand side $X,Y$ represent any extension of $X,Y\in\mathfrak{X}(M)$ to an open set of $\widetilde{M}$. We recall that the local representation of $\widetilde{\nabla}_XY$ is
\[
\widetilde{\nabla}_XY = (X(Y^k) + X^iY^j\Gamma^k_{ij})E_k
\]
for a given smooth local frame $(E_i)$. The connection coefficients $\Gamma^k_{ij}$ are the Christoffel symbols of the metric $\widetilde{g}$.

Let's assume now that $M$ is orientable. In boundary normal coordinates on $M$, this is, in a collar neighborhood of $M$ (in $\widetilde{M}$) we  write $x=(x',x^n)$ with $x'=(x^1,\dots,x^{n-1})$ local coordinates on $M$ and $x^n$ representing the signed distance to $M$, the ambient metric $\widetilde{g}$ takes the form
\[
g_{\alpha,\beta}dx^\alpha dx^\beta + (dx^n)^2
\]
with $1\leq \alpha,\beta\leq n-1$. We take $N=(0,\dots,0,-1)$ as the exterior unit normal vector field, thus, for any smooth vector field $X$ in $M$ (understood as $X=X^\alpha E_\alpha+0\cdot E_n$ in $T\widetilde{M}$),
\[
\Pi(X,X)=h(X,X)N = X^\alpha X^\beta\Gamma^n_{\alpha\beta}E_n,
\]
where in our coordinates $E_n=-N$. Then, $h(X,X) = -X^\alpha X^\beta\Gamma^n_{\alpha\beta}$.
According to our choice of normal vector field, $M$ is strictly convex if there is $\kappa>0$ such that
\[
X^\alpha X^\beta\Gamma^n_{\alpha\beta} \geq \kappa |X|^2_g.
\]
In boundary normal coordinates the Christoffel symbols satisfy $\Gamma^n_{\alpha,\beta} = -\frac{1}{2}\partial_ng_{\alpha\beta}$, therefore, the previous conditions rewrites as
\[
-\frac{1}{2}(\partial_ng_{\alpha\beta}) X^\alpha X^\beta \geq \kappa |X|^2_g.
\]
In terms of covectors and the inverse of the metric, $\tilde{g}^{-1}=g^{\alpha,\beta}dx^\alpha dx^\beta + (dx^n)^2$, we obtain the equivalent condition
\[
-\frac{1}{2}(\partial_ng^{\alpha\beta}(x)) \xi_\alpha \xi_\beta \geq \kappa |\xi|^2_g,\quad\forall (x,\xi)\in T^*M.
\]
Indeed, let's recall the natural isomorphism between tangent and cotangent bundles, $X^\alpha=g^{\alpha\beta}\xi_\beta$. On one hand, we have that $|X|^2_g=g^{\alpha\beta}\xi_\alpha\xi_\beta=|\xi|^2_g$, and on the other,
\[
\begin{aligned}
    \Pi(X,X)&=\Pi(g^{-1}\xi,g^{-1}\xi)\\
    &=g^{\alpha\gamma'}\xi_{\gamma'}g^{\beta\gamma}\xi_{\gamma}\left(-\frac{1}{2}\partial_n g_{\alpha\beta}\right).
\end{aligned}
\]
By differentiating the relation $g_{\alpha\beta}g^{\beta\gamma} = \delta_{\alpha\gamma}$ we get
\[
(\partial_ng_{\alpha\beta})g^{\beta\gamma} = -g_{\alpha\beta}(\partial_n g^{\beta\gamma}),
\]
from where it follows
\[
\begin{aligned}
    \Pi(X,X)&=-\frac{1}{2}g^{\alpha\gamma'}\xi_{\gamma'}\left(-g_{\alpha\beta}(\partial_n g^{\beta\gamma})\right)\xi_{\gamma}\\
    &=\frac{1}{2}\left(g^{\alpha\gamma'}g_{\alpha\beta}(\partial_n g^{\beta\gamma})\right)\xi_{\gamma}\xi_{\gamma'}\\
    &=\frac{1}{2}(\partial_n g^{\gamma'\gamma})\xi_{\gamma}\xi_{\gamma'}.
\end{aligned}
\]

\section{Symbol classes, FIO's and $\Psi$DO's}\label{Appdx:microlocal}

For any $m\in\RR$, and $n\geq 1$ integer, 
we define the {\em symbols class} $S^m(\RR^n)$ as the set of all smooth functions $a(x,\xi)$ satisfying that for any compact $K\subset \RR^n$, and any multi-indices $\alpha,\beta$, there is a constant $C_{K,\alpha,\beta}>0$ such that
\[
|\partial^\alpha_{x}\partial^\beta_\xi a(x,\xi)|\leq C_{K,\alpha,\beta}(1+|\xi|)^{m-|\beta|},\quad \forall (x,\xi)\in K\times\RR^n.
\]
We say that $a(x,\xi)$ is a {\em symbol of order $m$}. The set $S^m(\RR^n)$ becomes a Frech\'et space when augmented with the seminorms corresponding to the best constants in the inequalities above.

We say that the symbol $a(x,\xi)$ is {\em classical} if it has an asymptotic expansion of the form
\[
a(x,\xi)\sim \sum^\infty_{j=0}a_{m-j}(x,\xi),
\]
where $a_{m-j}$ are smooth and positively homogeneous in $\xi$ of order $m-j$ for $|\xi|>1$ (i.e., $a(x,\lambda\xi)=\lambda^ma(x,\xi)$ for $\lambda>0$), thus elements of $S^{m-j}(\RR^n)$; the sign $\sim$ means that
\[
a(x,\xi)-\sum^M_{j=0}a_{m-j}(x,\xi)\in S^{m-M-1}(\RR^n),\quad \forall M\geq 0.
\]

A real-valued function $\phi(x,\xi)\in C^\infty(\RR^n\times\dot{\RR}^n)$ (with $\dot{\RR}^n:=\RR^n\setminus\{0\}$) is called a phase function if it is positively homogeneous in $\xi$ of order 1, and $(\partial_x\phi,\partial_\xi\phi)\neq 0$ for all $x,\xi$. An operator $A$ defined by
\[
Af(x) = \frac{1}{(2\pi)^n}\int e^{i\phi(x,\xi)}a(x,\xi)\hat{f}(\xi)d\xi,\quad \forall f\in C^\infty_c(\RR^n)
\]
with $a(x,\xi)$ a symbol of order $m$, is said to be a Fourier Integral Operator of order $m$. If the symbol is classical, we call $a_m(x,\xi)$ the principal symbol of the operator. In the particular case of $\phi(x,\xi)=x\cdot\xi$ we say that $A$ (or equivalently $a(x,D)$) is a Pseudodifferential Operator ($\Psi$DO) of order $m$.

By interpreting the previous oscillatory integral in the sense of distributions, an FIO extends to a continuous map from $\mathcal{E}'(\RR^n)$ to $\mathcal{D}'(\RR^n)$ ---the spaces of compactly supported distributions and distributions, respectively. If in addition, the phase function is nondegenerate and the FIO is associated to a local canonical graph, it continuously maps $H^s_{comp}(\RR^n)$ to $H^{s-m}_{loc}(\RR^n)$. This is, for instance, the case of all $\Psi$DO's. For definitions of nondegeneracy and local cannonical graph we refer the reader to \cite{TrevesII}.

\end{document}